\theoremstyle{plain}
\newtheorem{theorem}{Theorem}[section]
\newtheorem*{theorem*}{Theorem}
\newtheorem{proposition}[theorem]{Proposition}
\newtheorem{corollary}[theorem]{Corollary}
\newtheorem{lemma}[theorem]{Lemma}
\theoremstyle{definition}
\newtheorem{definition}[theorem]{Definition}
\theoremstyle{remark}
\newtheorem*{remark}{Remark}
\numberwithin{equation}{section}
\DeclareMathOperator{\LCD}{LCD}
\def \R {\mathbb{R}}
\def \C {\mathbb{C}}
\def \Z {\mathbb{Z}}
\def \E {\mathbb{E}}
\def \P {\mathbb{P}}
\def \EE {\mathcal{E}}
\def \NN {\mathcal{N}}
\def \LL {\mathcal{L}}
\def \MM {\mathcal{M}}
\def \a {\alpha}
\def \b {\beta}
\def \g {\gamma}
\def \e {\varepsilon}
\def \d {\delta}
\def \k {\kappa}
\def \l {\lambda}
\def \s {\sigma}
\def \z {\zeta}
\def \< {\langle}
\def \> {\rangle}
\def \^ {\widehat}
\def \dist {{\rm dist}}
\def \vol {{\rm vol}}
\def \supp {{\rm supp}}
\def \Sparse {{\mathit{Sparse}}}
\def \Comp {{\mathit{Comp}}}
\def \Incomp {{\mathit{Incomp}}}
\def \smin {{s_n}}
\newcommand{\pr}[2]{\langle {#1} , {#2} \rangle}
\newcommand{\norm}[1]{\left \| #1 \right \|}
\def \etc {,\ldots,}
\begin{document}
\title[Non-asymptotic theory]{Recent developments in non-asymptotic theory \\ of random matrices
   }

\author{Mark Rudelson
  }

\thanks{
  Partially supported by NSF DMS grant DMS 1161372.
}

\address{Department of Mathematics,
   University of Michigan,
   Ann Arbor, MI 48109, USA}
\email{rudelson@umich.edu}

\date{}

\subjclass[2010]{Primary: 60B20}

\begin{abstract}
Non-asymptotic theory of random matrices strives to investigate the spectral properties of random matrices, which are valid with high probability for matrices of a large fixed size. Results obtained in this framework find their applications in high-dimensional convexity, analysis of convergence of algorithms, as well as in random matrix theory itself. In these notes we survey some recent results in this area and describe the techniques aimed for obtaining explicit probability bounds.

\end{abstract}

\maketitle

\tableofcontents

\section{Introduction}

The classical random matrix theory is concerned with asymptotics of various spectral characteristics of families of random matrices, when the dimensions of the matrices tend to infinity. There are many examples when these characteristics, which are random variables themselves, converge to certain limit laws. This includes the celebrated Wigner semicircle law for the empirical measures of eigenvalues of random symmetric matrices, Marchenko--Pastur law, which is the limit of empirical measures of sample covariance matrices, Tracy--Widom distribution describing the limit of the first singular values of a sequence of random matrices, etc. \cite{AGZ}.
These limits are of paramount importance, yet in applications one usually needs information about the behavior of such characteristics for large, but fixed $n$. For instance in problems in convex geometry one constructs a random section of an $N$-dimensional convex body by taking the kernel or the range of a certain random matrix. Random matrices arise also in analysis of rates of convergence of computer science algorithms. In both cases, the dimension of the ambient space remains fixed, and one seeks explicit estimates of probabilities in terms of the dimension. For such problems knowing the limit behavior is of little help.

The problems involving estimates for a fixed finite dimension arise in the classical random matrix theory as well. One of the main approaches in deriving the limit laws is based on analysis of the Stieltjes transform of measures \cite{AGZ}. To derive the convergence of Stieltjes transforms, one frequently has to provide explicit bounds on the smallest singular value of a random matrix of a fixed size, which holds with high probability. This need arises, e.g., in derivation of the circular law \cite{GT, TV circ1, TV circ2} and the single ring theorem \cite{GKZ}.

 These questions led to development of non-asymptotic theory of random matrices, which provides probabilistic bounds for eigenvalues, singular values, etc. for random matrices of a large fixed size. The situation is roughly parallel to that arising for the sums of i.i.d. random variables, where the asymptotic and non-asymptotic results go hand in hand. The asymptotic behavior of the averages of $n$ i.i.d. random variables is governed by the Strong Law of Large Numbers establishing the almost sure convergence to the expectation. Yet, to assert that the average of a large number of random variables is close to the expectation, we need a non-asymptotic version, e.g. Hoeffding inequality. This inequality yields a subgaussian bound for the large deviations (see the details below). Such behavior suggests that the limit distribution of the deviation should be normal, which leads to an asymptotic result, the Central Limit Theorem (CLT). To use the CLT in evaluation of probabilities for random sums, we need its non-asymptotic version, namely the Berry--Esseen Theorem. This theorem provides in turn a crucial step in deriving another fundamental asymptotic result,  the Law of Iterated Logarithm.

  These notes discuss the methods of the non-asymptotic approach to the random matrix theory. We do not attempt to provide an exhaustive list of references (a reader can check the surveys \cite{DS}, \cite{RV ICM}, and \cite{V n-a}). Instead we concentrate on three essentially different examples, with the aim of presenting the methods and results in a maximally self-contained form. This approach inevitably leaves out several important recent developments, such as invertibility of random symmetric matrices \cite{V sym, Ng}, applications to the Circular Law \cite{GT, TV circ1, TV circ2}, and concentration for random determinants \cite{TV determinant Wigner, NV}. Yet, by restricting ourselves to a few results, we will be able to give a relatively complete picture of the ideas and methods involved in their proofs. We start with introduction to subgaussian random variables in Section \ref{sec: subgaussian}. In Sections \ref{sec: continuous}-\ref{sec: random normal} we obtain quantitative bounds for invertibility of random matrices with i.i.d. entries. As will be shown in Section \ref{sec: small ball}, the arithmetic structures play a crucial role here. Section \ref{sec: short Khinchin} studies a question arising in geometric functional analysis. Here the ambient space is Banach, and the approach combines the methods of the previous sections with the functional-analytic considerations. We will also touch upon majorising measures, which are a powerful tool for estimating suprema of random processes. Section \ref{sec: unitary} contains another quantitative invertibility result. Here we discuss a random unitary or orthogonal perturbation of a fixed matrix. Unlike in the first example, the arithmetic structure plays no role in this problem. The main difficulty is the dependence between the entries of a random matrix, and the method is based on the introduction of perturbations with independent entries.

  \subsection*{Acknowledgement} These notes are based in part on the material presented at
  the workshop
   ``Etats de la Recherche:
  Probability and geometry in interaction''
  at Paul Sabatier University in
  Toulouse,
    the mini-course given at the Warsaw University, and the Informal Analysis Seminar at Kent State University. The author is grateful to Franck Barthe, Michel Ledoux,
Rafal Latala, Krzystof Oleszkiewicz,  Michal Wojchehowski, Fedor Nazarov, Dmitry Ryabogin, and Artem Zvavitch for
their hospitality. The author is also grateful to Fedor Nazarov, Dmitry Ryabogin, and an anonimous referee for careful reading of the manuscript and many suggestions, which led to improvement of the presentation.

\section{Notation and basic definitions}

We shall consider random matrices of high order with independent
entries. For simplicity, we shall assume that the entries are
centered ($\E a_{j,k}=0$) and identically distributed (both
conditions may be relaxed).

Throughout these notes $\norm{\cdot}_p$ denotes the $\ell_p$ norm
\[
 \norm{x}_p=\left( \sum_{j=1}^n |x_j|^p \right)^{1/p}, \qquad 1 \le p<\infty,
\]
and $B_p^n$ stands for the unit ball of this norm.
The norm of an operator or a matrix will be denoted by $\norm{\cdot}$.
We use  $S^{n-1}$ for the unit Euclidean
sphere.
If $F$ is a finite set, then $|F|$ denotes the cardinality of $F$.
Letters $C, C', c$ etc. denote absolute constants.

If $N \ge n$ then an $N \times n$ matrix $A$ can be viewed as a
mapping of $\R^n$ into $\R^N$. Thus, a random matrix defines a
random $n$-dimensional section of $\R^N$. For geometric applications
we need to know that this matrix would not distort the metric too
much. Let us formulate it more precisely:
\begin{definition}
 Let $N \ge n$ and let $A$ be an $N \times n$ matrix. The condition
 number of the matrix $A$ is
 \[
   \k(A)= \frac{\max_{x \in S^{n-1}} \norm{Ax}_2}
               {\min_{x \in S^{n-1}} \norm{Ax}_2}.
 \]
 If $\min_{x \in S^{n-1}} \norm{Ax}_2=0$, we set $\k(A)= \infty$.
\end{definition}
The condition number of a matrix can be rewritten in terms of its
singular values.
\begin{definition}
 Let $N \ge n$ and let $A$ be an $N \times n$ matrix. The singular
 values of $A$ are the eigenvalues of $(A^*A)^{1/2}$, arranged in
 the decreasing order: $s_1(A) \ge s_2(A) \ge \ldots\ge s_n(A)$.
\end{definition}
The singular values of $A$ are the lengths of the semi-axes of the ellipsoid $A B_2^n$.
The first and the last singular values have a clear
functional-analytic meaning:
\[
  s_1(A)=\norm{A: \R^n \to \R^N},
\]
and
\[
  s_n(A)= \min_{x \in S^{n-1}} \norm{Ax}
  =1/\norm{A^{-1}: A \R^n \to \R^n},
\]
whenever $A$ has the full rank. In this notation $\k(A)=s_1(A)/s_n(A)$.

Therefore, to bound the condition number, we have to estimate the
first singular value from above, and the last one from below.
For matrices with i.i.d. random entries the first singular value is the most robust. It can be estimated using a simple $\e$-net argument, as will be shown in Proposition \ref{p: norm}. The last singular value presents a bigger challenge. We will obtain its bounds for ``tall'' rectangular matrices in Section \ref{sec: rectangular}, and for square matrices in Sections  \ref{sec: continuous}-\ref{sec: random normal}.

\section{Subgaussian random variables} \label{sec: subgaussian}

In this section we introduce an important class of random variables with strong tail decay properties.
 This class  contains the normal variables, as well as  all
bounded random variables.

\begin{definition}
Let $v>0$. A random variable  $\xi$ is called $v$-subgaussian if
 there exists a constant $C$  such that for any $t>0$
 \[
   \P(|\xi|>t) \le C e^{-vt^2}.
 \]
 A random variable $\xi$ is called centered if $\E \xi=0$.
\end{definition}
 If the parameter $v$ is an absolute constant, we call a $v$-subgaussian random variable subgaussian.
 We shall assume  that the random variable $\xi$ is
 non-degenerate, i.e. $\text{Var}(\xi)>0$.

The subgaussian condition can be formulated in a number of different
ways.
\begin{theorem}  \label{t: def subgaussian}
  Let $X$ be a random variable. The following conditions are
  equivalent:
  \begin{enumerate}
    \item $X$ is subgaussian;
    \item $\exists a>0 \ \E e^{aX^2}<+\infty$
    \quad \text{\rm ($\psi_2$-condition)};
    \item $\exists B, b>0 \ \forall \l \in \R \ \ \E e^{\l X} \le Be^{\l^2
    b}$
    \quad \text{\rm (Laplace transform condition)};
    \item $\exists K>0 \  \forall p \ge 1 \ \left ( \E |X|^p \right )^{1/p} \le
    K \sqrt{p}$
    \quad \text{\rm (moment condition)}.
  \end{enumerate}
  Moreover, if $X$ is a centered random variable, (3) can be rewritten as
\newline
\hglue .3in $(3)' \ \exists b'>0 \ \forall \l \in \R \ \E e^{\l X}
\le e^{\l^2
    b'}$.
\end{theorem}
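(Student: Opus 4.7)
The plan is to establish the circular chain $(1) \Rightarrow (2) \Rightarrow (3) \Rightarrow (1)$ together with the separate bidirectional equivalence $(2) \Leftrightarrow (4)$, which together give the equivalence of all four conditions. The moreover statement will then be derived from a two-regime argument exploiting the centering hypothesis.

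For the three implications in the main cycle: $(1) \Rightarrow (2)$ is a tail-integration computation,
\[
\E e^{aX^2} = 1 + \int_0^\infty 2at\,e^{at^2}\,\P(|X|>t)\,dt,
\]
and plugging in the tail bound from (1) makes the integrand decay like $t\,e^{-(v-a)t^2}$, so any $a<v$ works. For $(2)\Rightarrow(3)$, I apply the AM--GM inequality $|\lambda X|\le \lambda^2/(4a) + aX^2$ inside the expectation:
\[
\E e^{\lambda X}\le \E e^{|\lambda X|}\le e^{\lambda^2/(4a)}\,\E e^{aX^2},
\]
giving (3) with $B=\E e^{aX^2}$ and $b=1/(4a)$. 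Finally, $(3)\Rightarrow(1)$ is the Chernoff argument: $\P(X>t)\le e^{-\lambda t}\E e^{\lambda X}\le B e^{b\lambda^2-\lambda t}$, optimized at $\lambda=t/(2b)$ to yield $\P(X>t)\le B e^{-t^2/(4b)}$; applying this also to $-X$ produces (1) with $v=1/(4b)$ and constant $2B$.

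For $(2)\Leftrightarrow(4)$: the forward direction uses Markov's inequality to extract the tail bound $\P(|X|>t)\le M e^{-at^2}$ with $M=\E e^{aX^2}$, after which $\E|X|^p = p\int_0^\infty t^{p-1}\P(|X|>t)\,dt$ reduces via the substitution $s=at^2$ to a Gamma integral, and $\Gamma(p/2)^{1/p}=O(\sqrt{p})$ delivers (4). For the reverse direction, expand $\E e^{aX^2}=\sum_{k\ge 0} a^k \E X^{2k}/k!$, bound $\E X^{2k}\le K^{2k}(2k)^k$ from (4), and use Stirling $k!\ge (k/e)^k$ to get $(2k)^k/k!\le (2e)^k$; the series is then dominated by $\sum_k (2eaK^2)^k$, which converges for sufficiently small $a$.

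For the moreover statement, I split into two ranges of $\lambda$. Below a threshold $|\lambda|\le \lambda_0$, centering kills the linear term of the Taylor expansion, leaving
\[
\E e^{\lambda X}=1+\sum_{k\ge 2}\frac{\lambda^k\,\E X^k}{k!}.
\]
Bounding $|\E X^k|\le K^k k^{k/2}$ and applying Stirling gives $\sum_{k\ge 2}|\lambda|^k K^k k^{k/2}/k!\le \sum_{k\ge 2}(|\lambda|Ke)^k/k^{k/2}$, which for $\lambda_0$ small is dominated by its $k=2$ term and thus is at most $C_1\lambda^2$; hence $\E e^{\lambda X}\le 1+C_1\lambda^2\le e^{C_1\lambda^2}$. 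Above the threshold, (3) already gives $\E e^{\lambda X}\le B e^{b\lambda^2}=e^{b\lambda^2+\log B}\le e^{(b+\log B/\lambda_0^2)\lambda^2}$, and taking $b'$ to be the larger of $C_1$ and $b+\log B/\lambda_0^2$ yields $(3)'$. The main obstacle is this last matching step: the Gaussian-type bound $e^{b'\lambda^2}$ equals $1$ at $\lambda=0$, so the slope $\E X$ must vanish, and only centering permits the constant $B$ to be absorbed into the exponent.
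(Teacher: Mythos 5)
Your proof is correct, but it is organized differently from the paper's and several individual steps use genuinely different arguments. The paper proves the single cycle $(1)\Rightarrow(2)\Rightarrow(3)\Rightarrow(4)\Rightarrow(1)$, obtaining the moments from the Laplace transform by choosing $\lambda=\sqrt p$ and recovering the tail from the moments via Markov with an optimized $p$; you instead close the short cycle $(1)\Rightarrow(2)\Rightarrow(3)\Rightarrow(1)$ with the standard Chernoff optimization $\lambda=t/(2b)$, and attach $(4)$ through the separate equivalence $(2)\Leftrightarrow(4)$ (Gamma-integral for the forward direction, power-series expansion of $e^{aX^2}$ with Stirling for the reverse). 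Both routes are sound; yours makes the tail--Laplace duality explicit and gives the moment condition a self-contained two-way proof, while the paper's chain is slightly more economical in that each condition is used exactly once. The most substantive divergence is in $(3)'$: the paper first treats symmetric $X$ via the identity $\E e^{\lambda X}=\E\tfrac12(e^{\lambda X}+e^{-\lambda X})\le\E e^{\lambda^2X^2/2}$ together with H\"older and the $\psi_2$-condition, then reduces the general centered case by Jensen's symmetrization $\E e^{\lambda(X-\E X')}\le\E e^{\lambda(X-X')}$; you avoid symmetrization entirely by Taylor-expanding $\E e^{\lambda X}$, letting centering annihilate the linear term, and controlling the remainder with the moment bounds from $(4)$ (legitimate, since the equivalences are already established), which is the more common textbook treatment. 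Your closing remark correctly identifies why centering is indispensable. The only points worth tightening are routine: justify the interchange of expectation and series in the Taylor expansion (dominated convergence against $e^{|\lambda X|}$, which is integrable by $(3)$), and note that the large-$\lambda$ absorption of $B$ into the exponent assumes $B\ge1$, the case $B<1$ being trivial.
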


\begin{proof}
The proof is a series of elementary calculations. \\
 $(1) \Rightarrow
(2)$ Let $a<v$. By the integral distribution formula,
\[
   \E e^{aX^2} = 1+\int_0^{\infty} 2at e^{at^2} \cdot \P(|X|>t) \, dt
   \le 1+\int_0^{\infty} 2at \cdot C e^{-(v-a)t^2}  \, dt
   < +\infty.
\]
$(2) \Rightarrow (3)$ Let $\l$ be any real number. Then
\[
   \E e^{\l X} =\E e^{\l X - a X^2} e^{aX^2}
   \le \sup_{t \in \R} e^{\l t -at^2} \cdot \E e^{aX^2} \le B e^{\l^2/4a}.
\]
$(3) \Rightarrow (4)$ Set $\l = \sqrt{p}$. Replacing, as before,
the the function by its supremum, we get
\[
  \E|X|^p \le \sup_{t >0} t^p e^{-\sqrt{p} \, t} \cdot \E e^{\sqrt{p} |X|}
  \le \left ( \frac{\sqrt{p}}{e} \right )^p \cdot C e^{pb}.
\]
$(4) \Rightarrow (1)$ Assume first $t \ge e K$. Choose $p$ so that $\frac{K
\sqrt{p}}{t}=e^{-1}$.
\[
  \P (|X|>t) \le \frac{\E |X|^p}{t^p}
  \le \left (\frac{K \sqrt{p}}{t} \right )^p =e^{-p} = e^{-v t^2},
\]
where $v =e^{-2}K^{-2}$.
This proves (1) for $t \ge e K$. Setting $C=e$
 automatically guaranties that (1) holds for
$0<t<eK$ as well. \\
$(3)'$ We will assume that (3) holds with $B>1$ since otherwise the statement is trivial. Assume first that $X$ is symmetric.
 For large values of $\l$, we can derive (3) with constant $B=1$ by changing the parameter $b$.
  Indeed,  set $\l_0=\sqrt{2a}$ and choose  $\bar{b}>0$ so
 that $Be^{\l_0^2b} \le e^{\l_0^2 \bar{b}}$. This guarantees that (3) holds for all $\l$ such that $|\l| \ge \l_0$ with $B=1$ and $b$ replaced by $\bar{b}$.

 If $\l^2 \le 2a$, then by
 Holder's inequality and the $\psi_2$-condition,
 \[
     \E e^{\l X} = \E \frac{1}{2} ( e^{\l X} + e^{-\l X})
     \le \E e^{\l^2 X^2/2}
     \le \left ( \E e^{aX^2} \right )^{\l^2 /2a}
     \le \exp \left (c \frac{\l^2}{2a} \right ).
 \]
  Finally, we set
 $b'= \max(c/2a, \bar{b})$.

 In the general case, we use a simple symmetrization. Let $X'$
 be an independent copy of $X$. Then by Jensen's inequality,
 \[
  \E e^{\l X} = \E e^{\l (X-\E X')} \le \E e^{\l (X-X')},
 \]
 where $X-X'$ is a symmetric subgaussian random variable.
\end{proof}
\begin{remark}
The $\psi_2$-condition turns the set of centered
subgaussian random variables into a normed space. Define the
function $\psi_2: \R \to \R$ by $\psi_2(t)=\exp(t^2)-1$. Then for a
non-zero random variable set
\[
  \norm{X}_{\psi_2} = \inf \{s > 0 \mid \E \psi_2(X/s) \le 1 \}.
\]
The subgaussian random variables equipped with this norm form an
Orlicz space (see \cite{LT} for the details).
\end{remark}

To estimate the first singular value, we have to prove a large
deviation inequality for a linear combination of independent
subgaussian random variables. Note that a linear combination of independent
Gaussian random variables is Gaussian. We prove below that a linear
combination of independent subgaussian random variables is subgaussian.

\begin{theorem}  \label{t: Hoeffding}
  Let $X_1 \etc X_n$ be independent centered subgaussian random variables.
  Then for any $a_1 \etc a_n \in \R$
  \[
     \P \left (\left | \sum_{j=1}^n a_j X_j \right | >t \right )
     \le  2 \exp \left ( - \frac{c t^2}{ \sum_{j=1}^n a_j^2} \right ).
  \]
\end{theorem}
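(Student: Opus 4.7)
The plan is to use the classical Chernoff/Markov exponential bound together with the centered Laplace transform characterization $(3)'$ from Theorem \ref{t: def subgaussian}. Write $S = \sum_{j=1}^n a_j X_j$. For any $\lambda > 0$, Markov's inequality gives $\P(S > t) \le e^{-\lambda t}\,\E e^{\lambda S}$, and by independence of the $X_j$ we have $\E e^{\lambda S} = \prod_{j=1}^n \E e^{\lambda a_j X_j}$.

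Each $X_j$ is centered and subgaussian (with parameter an absolute constant by our convention), so Theorem \ref{t: def subgaussian} $(3)'$ supplies an absolute $b' > 0$ with $\E e^{\mu X_j} \le e^{\mu^2 b'}$ for every $\mu \in \R$, uniformly in $j$. Applying this with $\mu = \lambda a_j$ gives
\[
\E e^{\lambda S} \le \prod_{j=1}^n e^{\lambda^2 a_j^2 b'} = \exp\left(\lambda^2 b' \sum_{j=1}^n a_j^2\right),
\]
and hence $\P(S > t) \le \exp\bigl(-\lambda t + \lambda^2 b' \sum_j a_j^2\bigr)$. Optimizing the quadratic in $\lambda$ (the natural choice is $\lambda = t / (2b' \sum_j a_j^2)$) yields the one-sided bound $\P(S > t) \le \exp\bigl(-c t^2 / \sum_j a_j^2\bigr)$ with $c = 1/(4b')$.

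To obtain the two-sided statement, I would apply the same argument to $-S = \sum_j (-a_j) X_j$, noting that $-X_j$ is again centered and subgaussian with the same parameter, and then take a union bound; this produces the factor of $2$ in the final estimate.

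I do not anticipate a real obstacle here: the argument is essentially a bookkeeping exercise once $(3)'$ is in hand. The only point to verify carefully is that the constant $b'$ can be chosen as a single absolute constant valid for all $j$ simultaneously, which is exactly what the convention ``subgaussian means $v$-subgaussian for an absolute $v$'' guarantees, since the proof of $(2) \Rightarrow (3) \Rightarrow (3)'$ in Theorem \ref{t: def subgaussian} makes $b'$ a function only of $v$.
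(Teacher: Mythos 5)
Your proposal is correct and follows essentially the same route as the paper: both arguments use independence together with the Laplace transform condition $(3)'$ (crucially, with leading constant $1$) to bound $\E e^{\lambda S}$ by $\exp(\lambda^2 b' \sum_j a_j^2)$. The only cosmetic difference is that you carry out the Chernoff optimization in $\lambda$ explicitly, whereas the paper stops after verifying that the normalized sum satisfies $(3)'$ and implicitly invokes the equivalence $(3) \Rightarrow (1)$ of Theorem \ref{t: def subgaussian} to recover the tail bound.
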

\begin{proof}
  Set $v_j=a_j/\left ( \sum_{j=1}^n a_j^2 \right )^{1/2}$. We have
  to show that the random variable $Y= \sum_{j=1}^n v_j X_j$ is
  subgaussian. Let us check the Laplace transform condition $(3)'$. For any
  $\l \in \R$
\begin{align*}
   &\E \exp \left ( \l \sum_{j=1}^n v_jX_j \right )
    =  \prod_{j=1}^n \E \exp(\l v_j X_j) \\
  &\le \prod_{j=1}^n \exp(\l^2 v_j^2 b)
    = \exp \left (\l^2b \sum_{j=1}^n v_j^2 \right )
    =e^{\l^2 b}.
\end{align*}
The inequality here follows from $(3)'$. Note that the fact that the
constant in front of the exponent in $(3)'$ is 1 plays the crucial
role here.
\end{proof}

Theorem \ref{t: Hoeffding} can be used to give a very short proof of a
classical inequality due to Khinchin.

\begin{theorem}[Khinchin]  \label{t: Khinchin}
 Let $X_1 \etc X_n$ be independent centered subgaussian random variables.
 For any $p \ge 1$ there exist $A_p,B_p>0$ such that the
 inequality
 \[
    A_p \left ( \sum_{j=1}^n a_j^2 \right )^{1/2}
    \le \left ( \E \left | \sum_{j=1}^n a_j X_j \right |^p \right
               )^{1/p}
    \le B_p \left ( \sum_{j=1}^n a_j^2 \right )^{1/2}
 \]
 holds for all $a_1 \etc a_n \in \R$.
\end{theorem}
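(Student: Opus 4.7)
The plan is to handle the two inequalities separately, as they rely on different ideas. After normalization we may assume $\sigma^2:=\sum_{j=1}^n a_j^2=1$, and set $S=\sum_{j=1}^n a_j X_j$.

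For the upper bound, Theorem~\ref{t: Hoeffding} supplies the tail estimate $\P(|S|>t)\le 2e^{-ct^2}$. The layer cake formula then gives
\[
\E|S|^p \;=\; p\int_0^\infty t^{p-1}\P(|S|>t)\,dt \;\le\; 2p\int_0^\infty t^{p-1}e^{-ct^2}\,dt \;=\; p\,c^{-p/2}\Gamma(p/2),
\]
and by Stirling this yields $\norm{S}_p\le B_p$ with $B_p$ of order $\sqrt{p}$.

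For the lower bound with $p\ge 2$, Jensen's inequality gives $\norm{S}_p\ge \norm{S}_2$. Since the $X_j$ are centered and independent, $\E S^2=\sum_j a_j^2\,\E X_j^2\ge c_0$, where $c_0>0$ is any uniform lower bound on $\E X_j^2$ provided by nondegeneracy. For $1\le p<2$ I would invoke the log-convexity of $r\mapsto \log\norm{S}_r$ (a consequence of H\"older): pick $q>2$ and $\theta\in(0,1)$ with $\tfrac12=\tfrac{\theta}{p}+\tfrac{1-\theta}{q}$. Then
\[
\norm{S}_2 \;\le\; \norm{S}_p^\theta\,\norm{S}_q^{1-\theta},
\]
so rearranging gives $\norm{S}_p\ge \norm{S}_2^{1/\theta}\,\norm{S}_q^{-(1-\theta)/\theta}$. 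Applying the $L^2$ lower bound to the numerator and the already established upper bound (say at $q=4$) to the denominator delivers an explicit $A_p>0$.

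The main obstacle is the small-$p$ lower bound: Theorem~\ref{t: Hoeffding} gives nothing directly about concentration of $|S|$ away from zero, so the detour through $\norm{S}_2$ and a higher $L^q$ norm seems unavoidable. One should also carefully track the dependence of $A_p$ and $B_p$ on the subgaussian parameter of the $X_j$ and on the variance lower bound $c_0$; the latter is exactly what the standing nondegeneracy assumption supplies.
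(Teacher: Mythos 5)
Your proof is correct and follows essentially the same route as the paper: the upper bound comes from the subgaussianity of $S$ via Theorem~\ref{t: Hoeffding} (you re-derive the $\sqrt{p}$ moment growth by the layer-cake formula where the paper simply cites part (4) of Theorem~\ref{t: def subgaussian}), and the lower bound for $p<2$ uses H\"older interpolation of $\norm{S}_2$ between $\norm{S}_p$ and a higher moment, which is exactly the paper's $p=1$, $q=3$ Cauchy--Schwarz trick stated for general exponents.
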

\begin{proof}
Without loss of generality,  assume that $\left ( \sum_{j=1}^n a_j^2 \right
)^{1/2}=1$.

Let $p \ge 2$. Then by H\"{o}lder's inequality
\[
  \left ( \sum_{j=1}^n a_j^2 \right )^{1/2}
  = \left ( \E \left |\sum_{j=1}^n a_j X_j  \right |^2 \right )^{1/2}
  \le \left ( \left | \E \sum_{j=1}^n a_j X_j \right |^p \right
  )^{1/p},
\]
so $A_p=1$. By Theorem \ref{t: Hoeffding}, $Y= \sum_{j=1}^n a_j X_j $ is
a subgaussian random variable. Hence,
\[
  \left ( \E |Y|^p \right )^{1/p} \le C \sqrt{p}=:B_p.
\]
This is the right asymptotic as $p \to \infty$.

In the case $1 \le p \le 2$ it is enough to prove the inequality for
$p=1$. As before, by H\"{o}lder's inequality, we can choose  $B_p=1$.
Applying Khinchin's inequality with $p=3$, we get
\[
  \E |Y|^2=\E |Y|^{1/2} \cdot |Y|^{3/2}
  \le  \left ( \E |Y| \right )^{1/2}
  \cdot  \left ( \E |Y|^3 \right )^{1/2}
  \le \left ( \E |Y| \right )^{1/2}
  \cdot  B_3^{3/2}\,\left (  \E |Y|^2 \right )^{3/4}.
\]
Hence,
\[
    B_3^{-3} \left ( \E |Y|^2 \right )^{1/2} \le \E |Y|. \qedhere
\]

\end{proof}

\section{Invertibility of a rectangular random matrix} \label{sec: rectangular}

We  introduce the {\em $\e$-net argument}, which will enable us to bound the condition number for a random $N \times n$ matrix with independent entries in the case when $N \gg n$. To simplify the proofs we assume from now on that the entries of the matrix are centered, subgaussian random variables.

Recall the definition  of an $\e$-net.
\begin{definition}
  Let $(T,d)$ be a metric space. Let $K \subset T$. A set
  $\mathcal{N} \subset T$ is called an $\e$-net for $K$ if
  \[
     \forall x \in K \  \exists \, y \in \mathcal{N} \ d(x,y)<\e.
  \]
A set
  $\mathcal{S} \subset K$ is called $\e$-separated if
  \[
     \forall x,y \in  \mathcal{S} \quad d(x,y) \ge \e.
  \]
\end{definition}
 The union of $\e$-balls centered at the $\e$-net $\mathcal{N}$ covers $K$, while the $\e$-balls centered at $\mathcal{S}$ form a packing.
These two notions are closely related.  Namely, we have the
following elementary Lemma.
\begin{lemma}   \label{l: net-separated}
  Let $K$ be a subset of a metric space $(T,d)$, and let
  $\mathcal{N}  \subset T$ be an $\e$-net for $K$. Then
  \begin{enumerate}
    \item there exists a $2\e$-net $\mathcal{N}' \subset K$ such that
      $|\mathcal{N}'| \le |\mathcal{N}|$;
    \item  any $2\e$-separated set $\mathcal{S} \subset K$ satisfies
       $|\mathcal{S}| \le |\mathcal{N}|$.
    \item From the other side, any maximal $\e$-separated set
    $\mathcal{S}' \subset K$ is an $\e$-net for $K$.
  \end{enumerate}
\end{lemma}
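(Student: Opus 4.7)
The plan is to handle the three items separately, each via a short triangle-inequality argument; no deep ideas are needed, and the main task is only to organise the pigeonhole correctly.

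For (1), I would construct $\mathcal{N}'$ by selecting at most one point of $K$ per element of $\mathcal{N}$. Concretely, for every $y \in \mathcal{N}$ such that the ball $B(y,\e)$ meets $K$, pick one representative $y' \in K \cap B(y,\e)$ and put it into $\mathcal{N}'$. The inequality $|\mathcal{N}'| \le |\mathcal{N}|$ is immediate from this choice. To check the net property, take any $x \in K$; the hypothesis yields $y \in \mathcal{N}$ with $d(x,y) < \e$, and the corresponding representative $y' \in \mathcal{N}'$ satisfies $d(y,y') < \e$, so the triangle inequality gives $d(x,y') < 2\e$.

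For (2), the natural approach is to build an injection $\mathcal{S} \hookrightarrow \mathcal{N}$. For each $x \in \mathcal{S}$, fix a point $\phi(x) \in \mathcal{N}$ with $d(x, \phi(x)) < \e$, which exists because $\mathcal{N}$ is an $\e$-net for $K \supset \mathcal{S}$. If $\phi(x) = \phi(x')$ for distinct $x, x' \in \mathcal{S}$, the triangle inequality would force $d(x,x') < 2\e$, contradicting the $2\e$-separation of $\mathcal{S}$. Hence $\phi$ is injective and $|\mathcal{S}| \le |\mathcal{N}|$.

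For (3), I would argue by maximality: given $x \in K$, either $x \in \mathcal{S}'$, in which case it is trivially within distance $\e$ of itself, or $\mathcal{S}' \cup \{x\}$ is strictly larger than $\mathcal{S}'$ and therefore fails to be $\e$-separated. In the latter case there must be some $y \in \mathcal{S}'$ with $d(x,y) < \e$, exhibiting the required point of $\mathcal{S}'$. So $\mathcal{S}'$ covers $K$ with $\e$-balls. The only mild subtlety across the whole lemma is to remember that an $\e$-net is defined with strict inequality while $\e$-separation uses a non-strict one, so that the two notions mesh correctly in the maximality argument and no boundary cases are lost.
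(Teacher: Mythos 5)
Your proof is correct and complete; all three parts are handled by the standard pigeonhole/triangle-inequality arguments, and your remark about the strict inequality in the definition of a net versus the non-strict one in separation is exactly the right point to watch. The paper itself leaves this lemma as an exercise for the reader, so there is no proof to compare against, but yours is the intended argument.
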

  We leave the proof of this lemma for a reader as an exercise.
\begin{lemma}[Volumetric estimate]  \label{l: volumetric}
  For any $\e<1$ there exists an $\e$-net $\mathcal{N} \subset
  S^{n-1}$ such that
  \[
    |\NN| \le \left ( \frac{3}{\e} \right )^n.
  \]
\end{lemma}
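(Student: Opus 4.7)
The plan is to use a volume-packing argument based on part (3) of Lemma \ref{l: net-separated}: any maximal $\e$-separated set $\SS \subset S^{n-1}$ is automatically an $\e$-net for $S^{n-1}$. So it suffices to take $\NN = \SS$ for such a maximal set and bound $|\SS|$ from above.

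The key observation is that if $\SS$ is $\e$-separated, then the open Euclidean balls $B(x, \e/2)$ for $x \in \SS$ are pairwise disjoint. At the same time, for every $x \in \SS \subset S^{n-1}$ the ball $B(x, \e/2)$ is contained in the annulus-shell $(1 + \e/2) B_2^n$, since any point $y$ with $\|y - x\|_2 < \e/2$ satisfies $\|y\|_2 \le 1 + \e/2$. Using $\e < 1$, we further include this in $(3/2) B_2^n$, though the cleaner estimate comes from keeping $1 + \e/2$.

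Comparing Lebesgue volumes, and using that $\vol(B(x, \e/2)) = (\e/2)^n \vol(B_2^n)$,
\[
|\SS| \cdot (\e/2)^n \vol(B_2^n) \;=\; \sum_{x \in \SS} \vol\bigl(B(x, \e/2)\bigr) \;\le\; \vol\bigl((1 + \e/2) B_2^n\bigr) \;=\; (1 + \e/2)^n \vol(B_2^n).
\]
Cancelling $\vol(B_2^n)$ gives $|\SS| \le \bigl(\tfrac{2}{\e} + 1\bigr)^n$, and since $\e < 1$ implies $1 \le 1/\e$, this is at most $(3/\e)^n$, as required.

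The argument is essentially routine; the only thing that needs a moment of care is the containment $\bigcup_{x \in \SS} B(x, \e/2) \subset (1+\e/2) B_2^n$ — in particular, one must use the open $\e/2$-balls and the strict separation of $\SS$ to ensure disjointness (or alternatively work with closed balls and note that the overlapping set has measure zero). There is no real obstacle here.
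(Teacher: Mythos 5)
Your proof is correct and follows essentially the same route as the paper: take a maximal $\e$-separated set (hence an $\e$-net), observe that the $\e/2$-balls around its points are disjoint and contained in $(1+\e/2)B_2^n$, and compare volumes to get $(1+2/\e)^n \le (3/\e)^n$ using $\e<1$. Nothing further is needed.
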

\begin{proof}
 Let $\NN$ be a maximal $\e$-separated subset of $S^{n-1}$. Then for
 any distinct points $x,y \in \NN$
 \[
   \left(x+\frac{\e}{2} B_2^n \right) \cap \left(y+\frac{\e}{2} B_2^n\right)= \emptyset.
 \]
 Hence,
 \[
  |\NN| \cdot \vol \left (\frac{\e}{2} B_2^n \right )
  =\vol \left (\bigcup_{x \in \NN} \big(x+\frac{\e}{2} B_2^n\big) \right )
  \le \vol \left ( \big(1 +\frac{\e}{2}\big) B_2^n \right ),
 \]
 which implies
 \[
   |\NN| \le \left( 1+ \frac{2}{\e} \right )^n \le \left(\frac{3}{\e} \right)^n.  \qedhere
 \]
\end{proof}

Using $\e$-nets, we prove a basic bound  on the first singular value
of a random subgaussian matrix:

\begin{proposition}[First singular value]                       \label{p: norm}
  Let $A$ be an $N \times n$ random matrix, $N \ge n$, whose entries
  are independent copies of a centered subgaussian random variable. Then
  $$
  \P \big( s_1(A) > t \sqrt{N} \big)
  \le e^{-c_0 t^2 N}
  \qquad \text{for } t \ge C_0.
  $$
\end{proposition}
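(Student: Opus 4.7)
The plan is to use the $\e$-net strategy of Lemma \ref{l: volumetric} on both spheres simultaneously, together with the subgaussian concentration of Theorem \ref{t: Hoeffding}. Since $s_1(A) = \max_{x \in S^{n-1}} \|Ax\|_2 = \max_{x \in S^{n-1},\, y \in S^{N-1}} \langle Ax, y\rangle$, the quantity of interest is a supremum of a subgaussian bilinear form indexed by a product of two spheres. This form for each fixed pair $(x,y)$ is just a linear combination of i.i.d.\ subgaussian entries, so it has a single scalar subgaussian tail, and the remaining work is to pay for the net by a union bound.

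First I would fix $\e = 1/2$ and apply Lemma \ref{l: volumetric} to produce nets $\NN \subset S^{n-1}$ and $\MM \subset S^{N-1}$ of sizes at most $6^n$ and $6^N$ respectively. A standard approximation argument (which one can prove by writing any $x\in S^{n-1}$ as $x_0 + (x-x_0)$ with $x_0 \in \NN$, $\|x-x_0\|_2 < 1/2$, and iterating) gives
\[
s_1(A) \;\le\; 4 \max_{x \in \NN,\, y \in \MM} \langle Ax, y\rangle.
\]
Thus it suffices to control the maximum of $|\NN|\cdot|\MM| \le 6^{n+N} \le 36^N$ scalar random variables.

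For a fixed pair $(x,y) \in S^{n-1}\times S^{N-1}$, write $\langle Ax, y\rangle = \sum_{i,j} a_{ij}\, x_j y_i$. The coefficients satisfy $\sum_{i,j}(x_j y_i)^2 = \|x\|_2^2\|y\|_2^2 = 1$, and the $a_{ij}$ are independent centered subgaussian random variables, so Theorem \ref{t: Hoeffding} yields
\[
\P\bigl(|\langle Ax,y\rangle| > s\bigr) \;\le\; 2 e^{-c s^2}
\]
for an absolute constant $c>0$. Setting $s = t\sqrt{N}/4$ and taking a union bound,
\[
\P\bigl(s_1(A) > t\sqrt{N}\bigr) \;\le\; 2 \cdot 36^N \cdot \exp\!\left(-\frac{c\, t^2 N}{16}\right).
\]
Choosing $C_0$ large enough that $ct^2/16 - 2\log 6 \ge c_0 t^2$ for all $t \ge C_0$ (any $c_0 < c/16$ works) gives the stated bound.

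There is no real obstacle here: the bilinear-form trick trades the two nonlinear norms $\|\cdot\|_2$ for a single linear functional of the entries, after which Theorem \ref{t: Hoeffding} directly supplies the Gaussian-type tail that is strong enough to absorb the $6^{n+N}$ cost of the double net. The only point requiring a bit of care is ensuring the constant in the exponential tail is recovered as $1$ (the $(3)'$ form in Theorem \ref{t: def subgaussian}), so that independence of the $a_{ij}$'s translates cleanly into an exponentially small bound with no spurious prefactor inside the exponent; but this is exactly what $(3)'$ was designed for, and was used in the proof of Theorem \ref{t: Hoeffding}.
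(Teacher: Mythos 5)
Your proposal is correct and follows essentially the same route as the paper: a $(1/2)$-net on each sphere via Lemma \ref{l: volumetric}, the approximation bound $s_1(A) \le 4\max_{x\in\NN,\,y\in\MM}|\langle Ax,y\rangle|$, the observation that $\langle Ax,y\rangle$ is a unit-norm linear combination of the entries so Theorem \ref{t: Hoeffding} gives a subgaussian tail, and a union bound over the $6^{n+N}$ pairs absorbed into the exponent for $t \ge C_0$. No substantive differences.
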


\begin{proof}
Let $\NN$ be a $(1/2)$-net in $S^{N-1}$ and $\mathcal{M}$ be a
$(1/2)$-net in $S^{n-1}$.
 For any $u \in S^{n-1}$, we can choose a $x \in \NN$  such that $\norm{x-u}_2 <1/2$.
 Then
 \[
   \norm{Au}_2 \le \norm{Ax}_2+ \norm{A} \cdot \norm{x-u}_2 \le \norm{Ax}_2+ \frac{1}{2} \norm{A}.
 \]
 This shows that $\norm{A} \le 2 \sup_{x \in \NN} \norm{Ax}_2 = 2 \sup_{x \in \NN} \sup_{v \in S^{N-1}} \pr{Ax}{v}$.
 Approximating $v$ in a similar way by an element of $\mathcal{M}$, we obtain
\[
   \norm{A} \le 4 \max_{ x \in \NN, \ y \in \MM} |\pr{A x}{y}|.
\]
 By Lemma \ref{l: volumetric}, we can choose these
nets so that
$$
|\NN|  \le 6^N, \quad |\MM|  \le 6^n.
$$
By Theorem \ref{t: Hoeffding}, for  every $x \in \NN$ and $y \in \MM$,
the random variable $\pr{Ax}{y} =\sum_{j=1}^N \sum_{k=1}^n a_{j,k} y_j x_k $ is subgaussian, i.e.,
$$
\P \big( |\< Ax,y \> | > t \sqrt{N} \big) \le C_1 e^{-c_1 t^2 N}
\qquad \text{for } t > 0.
$$
Taking the union bound, we get
\begin{align*}
\P \big( \|A\| > t \sqrt{N} \big)
 &\le  |\NN| |\MM| \max_{x \in \mathcal{N}, \,
y \in \mathcal{N}}
  \P \big( |\< Ax,y \> |  > t \sqrt{N}/4 \big) \\
&\le  6^N \cdot 6^N \cdot  C_1 e^{-c_2 t^2 N}
\le C_1  e^{-c_0 t^2 N},
\end{align*}
 provided that $t \ge C_0$ for an appropriately chosen constant $C_0>0$.
This completes the proof.
\end{proof}
Proposition \ref{p: norm} means that for any $N \ge n$ the first
singular value is $O(\sqrt{N})$ with probability close to $1$. Thus,
the bound for the condition number reduces to a lower estimate of
the last singular value.

To obtain it, we prove an easy estimate for a small ball probability
of a sum of independent random variables.

\begin{lemma}   \label{l: LPRT}
  Let $\xi_1 \etc \xi_n$ be independent copies of a
  centered subgaussian random variable with variance  $1$.
  Then there exists $\mu \in (0,1)$
  such that for every coefficient vector $a= (a_1,\ldots,a_n) \in S^{n-1}$
  the random sum $S = \sum_{k=1}^n a_k \xi_k$ satisfies
  $$
  \P(|S| < 1/2) \le \mu.
  $$
\end{lemma}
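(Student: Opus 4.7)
The plan is to apply the Paley--Zygmund inequality, which converts a two-sided moment comparison into a small-ball bound. Since $a \in S^{n-1}$ and the $\xi_k$ are independent, centered, and of unit variance, we have $\E S^2 = \sum_{k=1}^n a_k^2 = 1$. So it suffices to show that $\E S^4$ is bounded by an absolute constant, and then use
\[
  \P(S^2 \ge \theta \E S^2) \ge (1-\theta)^2 \frac{(\E S^2)^2}{\E S^4}
\]
with $\theta = 1/4$.

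For the fourth-moment bound, I would invoke Khinchin's inequality (Theorem \ref{t: Khinchin}) with $p=4$, which gives $(\E|S|^4)^{1/4} \le B_4 \bigl(\sum a_k^2\bigr)^{1/2} = B_4$. Hence $\E S^4 \le B_4^4 =: M$, a constant depending only on the subgaussian parameter of $\xi_1$.

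Combining these, Paley--Zygmund yields
\[
  \P\bigl(|S| \ge 1/2\bigr) = \P(S^2 \ge 1/4) \ge \left(\tfrac{3}{4}\right)^2 \cdot \frac{1}{M} = \frac{9}{16M},
\]
and therefore $\P(|S| < 1/2) \le 1 - 9/(16M) =: \mu < 1$, as desired.

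There is no real obstacle here: the argument is a clean two-line application of Paley--Zygmund once the moments are in hand. The only thing worth noting is that the Paley--Zygmund inequality itself is just an exercise in Cauchy--Schwarz applied to $S^2 \, \one_{\{S^2 \ge \theta \E S^2\}}$, so if the notes prefer a self-contained derivation one can inline that half-line instead of citing the inequality by name. The constant $\mu$ depends on the subgaussian moment bound through $B_4$, but is uniform in the dimension $n$ and in the choice of $a \in S^{n-1}$, which is exactly what the lemma asserts.
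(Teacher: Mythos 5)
Your proof is correct and follows essentially the same route as the paper: both arguments are an application of the Paley--Zygmund inequality to $S^2$ together with a dimension-free bound $\E S^4 \le C$. The only cosmetic difference is that you obtain the fourth-moment bound by citing Khinchin's inequality with $p=4$, whereas the paper cites Theorem \ref{t: Hoeffding} and the moment condition of Theorem \ref{t: def subgaussian} directly --- but since the paper's Khinchin inequality is itself deduced from exactly those two facts, this is the same argument.
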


\begin{proof}
 Let $0< \l < (\E S^2)^{1/2}=1$. By the Cauchy--Schwarz inequality,
 \[
  \E S^2 = \E S^2 \mathbf{1}_{[-\l,\l]}(S)+ \E S^2 \mathbf{1}_{\R \setminus [-\l,\l]}(S)
  \le \l^2+ \left(\E S^4 \right)^{1/2} \P (|S| > \l)^{1/2}.
 \]
 This leads to the  Paley--Zygmund inequality:
$$
\P (|S| > \l) \ge \frac{(\E S^2 - \l^2)^2}{\E S^4}
     = \frac{(1-\l^2)^2}{\E S^4}.
$$
 By Theorem \ref{t: Hoeffding}, the random variable $S$ is subgaussian,
 so by part (4) of Theorem~\ref{t: def subgaussian}, $\E S^4 \le C$.
To finish the proof, set $\l=1/2$.
\end{proof}

 Lemma \ref{l: LPRT} implies the following invertibility estimate for a
fixed vector.

\begin{corollary}  \label{c: individual}
 Let $A$ be a matrix as in Proposition~\ref{p: norm}. Assume that all entries of $A$ have variance $1$.
  Then
 there exist constants $\eta,\nu \in (0,1)$
 such that for every $x \in S^{n-1}$,
 \[
   \P ( \norm{Ax}_2  < \eta \sqrt{N}) \le \nu^N.
 \]
\end{corollary}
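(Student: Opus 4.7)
\medskip

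\noindent\textbf{Proof plan.}
The plan is to reduce the event $\{\|Ax\|_2 < \eta\sqrt{N}\}$ to the event that too many rows of $A$ produce a small inner product with $x$, and then defeat that event by a Bernoulli large-deviation bound on $N$ independent trials. Write $A_1,\ldots,A_N$ for the rows of $A$ and set $S_j := \pr{A_j}{x} = \sum_{k=1}^n a_{j,k} x_k$, so that $\|Ax\|_2^2 = \sum_{j=1}^N S_j^2$. Because the entries of $A$ are independent and $\sum_k x_k^2 = 1$, each $S_j$ is a weighted sum of independent centered subgaussian variables of unit variance with coefficient vector on $S^{n-1}$, and the $S_j$ are jointly independent across $j$. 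Hence Lemma~\ref{l: LPRT} applies to each $S_j$ and yields
\[
  \P(|S_j| < 1/2) \le \mu, \qquad j=1,\ldots,N,
\]
for the absolute constant $\mu\in (0,1)$ supplied by that lemma.

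The next step is a trivial truncation: introduce the indicator $T_j := \mathbf{1}_{\{|S_j|\ge 1/2\}}$, and observe that $S_j^2 \ge (1/4) T_j$ pointwise, so
\[
  \|Ax\|_2^2 \ge \tfrac{1}{4} \sum_{j=1}^N T_j.
\]
Consequently, if $\|Ax\|_2 < \eta\sqrt{N}$, then $\sum_{j=1}^N T_j < 4\eta^2 N$. I now choose $\eta \in (0,1)$ small enough that $4\eta^2 < 1-\mu$, for example $\eta := \tfrac{1}{4}\sqrt{1-\mu}$, which makes the inequality $4\eta^2 N < (1-\mu) N$ strict with a fixed gap.

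The random variables $T_1,\ldots,T_N$ are independent Bernoulli with $\E T_j \ge 1-\mu$, so $\E \sum_j T_j \ge (1-\mu)N$. The deviation $\sum T_j < 4\eta^2 N$ therefore corresponds to a deviation below the mean of size at least $((1-\mu)-4\eta^2)N = cN$ for a fixed constant $c>0$. Hoeffding's inequality (or a direct Chernoff computation, since the $T_j$ are $\{0,1\}$-valued) then gives
\[
  \P\!\left(\sum_{j=1}^N T_j < 4\eta^2 N \right) \le e^{-2c^2 N} =: \nu^N,
\]
with $\nu = e^{-2c^2}\in(0,1)$. Combining the two displayed implications yields $\P(\|Ax\|_2 < \eta\sqrt{N}) \le \nu^N$, as required.

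There is no real obstacle in this argument; the only mild point is to check that the hypotheses of Lemma~\ref{l: LPRT} apply to each $S_j$, which is immediate from independence of entries and the unit-variance assumption. The calibration of $\eta$ with respect to the small-ball constant $\mu$ is the only quantitative input, and it is handled by choosing $\eta$ with $4\eta^2 < 1-\mu$.
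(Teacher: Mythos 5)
Your proof is correct and follows essentially the same route as the paper: reduce the event $\{\|Ax\|_2<\eta\sqrt N\}$ to the event that fewer than $4\eta^2N$ of the independent coordinates $(Ax)_j$ exceed $1/2$ in absolute value, and bound that binomial-type tail. The only cosmetic difference is in the last step, where you invoke Hoeffding's inequality for the independent indicators $T_j$, whereas the paper bounds the same tail by a union bound over subsets of coordinates together with a count of such subsets; both give the required $\nu^N$.
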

\begin{proof}
 The coordinates of the vector $Ax$ are independent linear
 combinations of i.i.d. subgaussian random variables with
 coefficients $(x_1 \etc x_n) \in S^{n-1}$. Hence, by Lemma \ref{l:
 LPRT}, $\P(|(Ax)_j|< 1/2) \le \mu$ for all $j =1 \etc N$.

  Assume that $\norm{Ax}_2< \eta \sqrt{N}$. Then $|(Ax)_j|< 1/2$ for
  at least $(1-4\eta^2)N>N/2$ coordinates. If $\eta$ is small enough,
  then the number $M$ of subsets of $\{1 \etc N\}$ with at least
  $(1-4\eta^2)N$ elements is less than $\mu^{-N/4}$. Then the union
  bound implies
  \[
    \P( \norm{Ax}_2  < \eta \sqrt{N})
    \le M \cdot \mu^{N/2} \le \mu^{N/4}.  \qedhere
  \]
\end{proof}
Combining this with the $\e$-net argument, we obtain the
estimate for the smallest singular value of a random matrix, whose
dimensions are significantly different.
\begin{proposition}[Smallest singular value of rectangular matrices]
            \label{p: rectangular}
  Let $A$ be an $N \times n$ matrix whose entries are i.i.d.
  centered subgaussian random variables with variance $1$.
  There exist $c_1, c_2 > 0$ and $\d_0 \in (0,1)$  such that
   if $n < \d_0 N$, then
  \begin{equation}                  \label{eq: rectangular}
    \P \big( \min_{x \in S^{n-1}} \norm{Ax}_2 \le c_1 \sqrt{N}
         \big)
    \le e^{-c_2 N}.
  \end{equation}
\end{proposition}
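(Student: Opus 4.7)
The plan is to run a standard $\e$-net argument on the sphere $S^{n-1}$, combining the individual small-ball bound from Corollary~\ref{c: individual} with the operator norm bound from Proposition~\ref{p: norm}. The point is that Corollary~\ref{c: individual} gives an exponential-in-$N$ probability bound for a single direction, and as long as $n$ is small enough compared to $N$, the cardinality of an $\e$-net in $S^{n-1}$ (which is exponential in $n$) can be absorbed by the union bound.

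Concretely, I would first fix $\e \in (0,1)$ (to be chosen later, small), and use Lemma~\ref{l: volumetric} to pick an $\e$-net $\NN \subset S^{n-1}$ with $|\NN| \le (3/\e)^n$. By Corollary~\ref{c: individual} applied pointwise together with a union bound over $\NN$,
\[
\P \bigl(\exists\, x \in \NN : \norm{Ax}_2 < \eta \sqrt{N} \bigr)
\le \left(\tfrac{3}{\e}\right)^n \nu^N.
\]
In parallel, Proposition~\ref{p: norm} gives $\P(\norm{A} > C_0 \sqrt{N}) \le e^{-c_0 C_0^2 N}$. On the intersection of the complementary events, I would transfer the bound from the net to all of $S^{n-1}$: for arbitrary $y \in S^{n-1}$, choose $x \in \NN$ with $\norm{x-y}_2 < \e$, and estimate
\[
\norm{Ay}_2 \ge \norm{Ax}_2 - \norm{A}\cdot\norm{x-y}_2
\ge \eta \sqrt{N} - C_0 \e \sqrt{N}.
\]

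It remains to balance the three parameters $\e$, $\d_0$, $c_1$. First fix $\e$ small enough that $\eta - C_0 \e \ge \eta/2$, and set $c_1 := \eta/2$. Then choose $\d_0 \in (0,1)$ small enough so that $\d_0 \log(3/\e) < \tfrac{1}{2} \log(1/\nu)$; whenever $n < \d_0 N$ this ensures
\[
\left(\tfrac{3}{\e}\right)^n \nu^N \le \nu^{N/2} = e^{-c_2' N},
\]
and combining with the norm bound gives \eqref{eq: rectangular} with $c_2 := \min(c_2',\, c_0 C_0^2)/2$. The only genuinely delicate step is the last balancing: the net size grows like $e^{n \log(3/\e)}$, so $\e$ must be selected before $\d_0$, and one has to make sure that the transfer inequality $\eta - C_0 \e > 0$ still holds after $\e$ has been shrunk. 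Once the order of quantifiers is set correctly the calculation is routine.
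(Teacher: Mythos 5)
Your proposal is correct and follows essentially the same route as the paper: an $\e$-net on $S^{n-1}$ of cardinality $(3/\e)^n$, the pointwise bound of Corollary~\ref{c: individual} with a union bound over the net, the norm bound of Proposition~\ref{p: norm}, and the transfer inequality $\norm{Ay}_2 \ge \norm{Ax}_2 - \norm{A}\,\norm{x-y}_2$ with $\e$ chosen of order $\eta/C_0$ before $\d_0$ is fixed. The order of quantifiers and the final balancing match the paper's argument, so there is nothing to add.
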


\begin{proof}
  Let $\e>0$ to be chosen later. Let $\NN$ be an $\e$-net
in $S^{n-1}$  of cardinality $|\NN| \le (3/\e)^n$. Let $\eta$ and
$\nu$ be the numbers in Corollary \ref{c: individual}. Then by the
union bound,
\begin{equation}   \label{on net}
  \P \left ( \exists y \in \NN : \ \norm{Ay}_2 < \eta \sqrt{N} \right )
  \le (3/\e)^n \cdot \nu^N.
\end{equation}
Let $V$ be the event that $\norm{A} \le C_0 \sqrt{N}$ and $\norm{Ay}_2
\ge  \eta \sqrt{N}$ for all points $y \in \NN$.

Assume that $V$ occurs, and let $x \in S^{n-1}$ be any point. Choose
$y \in \NN$ such that $\norm{y-x}_2<\e$. Then
\[
  \norm{Ax}_2 \ge \norm{Ay}_2 - \norm{A} \cdot \norm{x-y}_2
 \ge  \eta \sqrt{N} - C_0 \sqrt{N} \cdot \e
 = \frac{\eta \sqrt{N}}{2},
\]
if we set $\e=\eta/(2C_0)$. By \eqref{on net} and Proposition \ref{p:
norm},
$$
    \P (V^c) \le \big( \nu \cdot  \left ( 3/\e \right
    )^{n/N} \big)^N + e^{-c'N}
    \le e^{-c_2 N},
$$
if we assume that $n/N \le \d_0$ for an appropriately chosen $\d_0 <
1$. This completes the proof.
\end{proof}

\begin{remark}
 Note that although we assumed that the entries of the matrix $A$ are independent,
 Proposition \ref{p: rectangular} can be proved under a weaker assumption.
 It is enough to assume that
 for any $x \in S^{n-1}$, the coordinates of the vector $Ax$ are independent centered subgaussian
 random variables of unit variance.
 Indeed, in this case Corollary~\ref{c: individual} applies without any changes.
 We will use this observation in Subsection~\ref{ss: p>2}.
\end{remark}

\section{Invertibility of a square matrix: \\
absolutely continuous entries}  \label{sec: continuous}

 Until recently, much less has been known about the behavior of the smallest singular
value of a square matrix. In the classic work on numerical inversion
of large matrices, von~Neumann and his associates used random
matrices to test their algorithms, and they speculated that
\begin{equation}                \label{whp}
  s_n(A) \sim n^{-1/2} \quad \text{with high probability}
\end{equation}
(see \cite{vN}, pp.~14, 477, 555). In a more precise form, this
estimate was conjectured by Smale \cite{S 85} and proved by Edelman
\cite{E 88}  and Szarek \cite{Sz} for {\em random Gaussian matrices}
$A$, i.e., those with i.i.d. standard normal entries. Edelman's theorem
states that for every $\e \in (0,1)$,
\begin{equation}                \label{tail}
  \P \big( s_n(A) \le \e n^{-1/2} \big) \sim \e.
\end{equation}

Conjecture \eqref{whp} for general random matrices was an open
problem, unknown even for the {\em random sign matrices} $A$, i.e., those
whose entries are $\pm 1$ symmetric random variables. The first polynomial bound for
the smallest singular value of a random matrix with i.i.d. subgaussian, in particular, $\pm 1$ entries was obtained in \cite{R square}. It was proved that for such matrix $s_n(A) \ge C n^{-3/2}$ with high probability. Following that, Tao and Vu proved that if $A$ is a $\pm 1$ random matrix, then for any $\a>0$ there exists $\b>0$ such that $s_n(A) \ge n^{-\b}$ with probability at least $1-n^{-\a}$.
In
\cite{RV-square}
 the conjecture \eqref{whp} is proved in full generality under the
 fourth moment assumption.

\begin{theorem}[Invertibility: fourth moment]       \label{t: 4}
  Let $A$ be an $n \times n$ matrix whose entries are independent centered
  real random variables with variances at least $1$ and fourth moments
  bounded by $B$.
  Then, for every $\d > 0$ there exist $\e > 0$ and $n_0$
  which depend (polynomially) only on $\d$ and $B$,  such that
  $$
  \P \big( s_n(A) \le \e n^{-1/2} \big) \le \d
    \qquad \text{for all $n \ge n_0$}.
  $$
\end{theorem}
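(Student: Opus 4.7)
The plan is to reduce the bound $s_n(A) = \min_{x \in S^{n-1}} \|Ax\|_2$ to a small-ball-probability problem by decomposing $S^{n-1}$ into two complementary pieces and handling each with different tools. Call a unit vector $x$ \emph{sparse} if it has at most $\rho n$ non-zero coordinates, \emph{compressible} if it is within Euclidean distance $\rho$ of a sparse vector, and \emph{incompressible} otherwise. With $\rho \in (0,1)$ a small constant to be chosen, we write
\[
   \P(s_n(A) \le \e n^{-1/2}) \le \P\!\left(\inf_{x \in \Comp}\norm{Ax}_2 \le \e n^{-1/2}\right) + \P\!\left(\inf_{x \in \Incomp}\norm{Ax}_2 \le \e n^{-1/2}\right)
\]
and estimate the two terms separately.

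For the compressible part, the set $\Comp$ is effectively low-dimensional: it is contained in a $(2\rho)$-neighbourhood of the union over subsets $I \subset \{1,\ldots,n\}$ of size $\lceil \rho n\rceil$ of unit spheres in the coordinate subspaces $\R^I$. Each such sphere admits an $\e$-net of size $(3/\e)^{\rho n}$ by Lemma \ref{l: volumetric}, so a total net of cardinality at most $\binom{n}{\rho n}(3/\e)^{\rho n} \le (C/\rho)^{\rho n}$. For every fixed $y$ in this net, the columns-restricted matrix $A_I$ is a ``tall'' $n \times \rho n$ matrix to which the rectangular estimate (Proposition \ref{p: rectangular}) applies, giving $\P(\|Ay\|_2 \le c\sqrt{n}) \le e^{-cn}$. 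Combining with the norm bound $\|A\| \le C\sqrt{n}$ from Proposition \ref{p: norm} to pass from the net to all of $\Comp$, we obtain an exponentially small bound, provided $\rho$ is chosen small enough relative to the constants.

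For the incompressible part, the $\e$-net method fails because $\Incomp$ is essentially the whole sphere. Instead we use the standard distance-to-hyperplane reduction: if $R_1,\ldots,R_n$ are the rows of $A$ and $H_k = \Span(R_i)_{i \neq k}$, then one shows that for every incompressible $x$,
\[
   \norm{Ax}_2 \ge \frac{c}{\sqrt{n}}\,\max_{k \le n} \dist(R_k, H_k)
\]
by writing $Ax$ in terms of $\pr{R_k}{x}$ and using the fact that an incompressible $x$ has at least $cn$ coordinates of magnitude $\asymp n^{-1/2}$. Consequently, invertibility over $\Incomp$ is controlled by the event that \emph{every} $\dist(R_k,H_k)$ is small. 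By conditioning on $H_k$ and writing $\dist(R_k,H_k) = \pr{R_k}{\nu_k}$ where $\nu_k$ is a unit normal, we reduce matters to a small ball estimate of the form
\[
   \sup_{\nu \in S^{n-1}} \P\!\left(\left|\sum_{i} \nu_i \xi_i\right| \le \e\right) = o(1)
\]
where $\xi_i$ are the independent entries of a row. The main obstacle is that $\nu$ is itself random and can concentrate its mass on few coordinates or on arithmetically structured sets, for which the Paley--Zygmund argument of Lemma \ref{l: LPRT} is far too weak. One must therefore prove that with high probability the normal $\nu_k$ is itself incompressible, and then establish a Littlewood--Offord-type anticoncentration inequality for $\sum \nu_i \xi_i$ when $\nu$ is incompressible. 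This structural anticoncentration, developed in later sections via the Least Common Denominator, is the hard technical heart of the argument; combining it with the two-step decomposition above and taking a union bound over $k$ closes the proof.
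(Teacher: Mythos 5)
Your overall architecture --- the compressible/incompressible decomposition, nets plus the single-vector exponential bound on $\Comp$, and the distance-to-a-random-subspace reduction plus Littlewood--Offord anticoncentration on $\Incomp$ --- is exactly the strategy the paper carries out in Sections 5--7 for the subgaussian Theorem \ref{t: subgaussian} (Theorem \ref{t: 4} itself is delegated to \cite{RV-square}, which runs the same scheme). However, your reduction on the incompressible part is stated incorrectly. The inequality $\norm{Ax}_2 \ge \frac{c}{\sqrt{n}}\max_{k}\dist(R_k,H_k)$ is false: the distance identity lives on the \emph{columns}, not the rows (writing $Ax=\sum_k x_k X_k$ with $X_k$ the columns and $H_k$ the span of the remaining columns gives $\norm{Ax}_2\ge\dist(Ax,H_k)=|x_k|\,\dist(X_k,H_k)$), and this is useful only for an index $k$ at which $|x_k|\gtrsim n^{-1/2}$. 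The maximizing $k$ may well have $x_k=0$, since an incompressible vector is only guaranteed to have a $\nu_1$-fraction of coordinates of size $\asymp n^{-1/2}$. The correct statement (Lemma \ref{l: via distance}) is an averaging one: if more than $(1-\nu_1)n$ indices satisfy $\dist(X_k,H_k)\ge\e$, then for every incompressible $x$ the spread set $\s(x)$ of Lemma \ref{l: spread} must meet this set, and Chebyshev's inequality converts this into $\P(\inf_{x\in\Incomp}\norm{Ax}_2<\e\nu_2 n^{-1/2})\le\nu_1^{-1}\P(\dist(X_n,H_n)<\e)$. Your phrase ``the event that every $\dist(R_k,H_k)$ is small'' would follow from your max inequality but not from the true one; you need this fractional version, and it costs the factor $\nu_1^{-1}$.

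The more substantive gap is that every quantitative tool you invoke is established in the paper only for subgaussian entries, whereas Theorem \ref{t: 4} assumes only bounded fourth moments --- which is the entire point of this theorem as opposed to Theorem \ref{t: subgaussian}. Two repairs are needed. First, passing from a net to all of $\Comp$ (and likewise the level-set net argument behind the analogue of Theorem \ref{t: random normal}) uses the event $\norm{A}\le C\sqrt{n}$; under a fourth-moment hypothesis this event is no longer exponentially likely, and one can only arrange $\P(\norm{A}>C(B,\d)\sqrt{n})\le\d/2$ (e.g.\ from a fourth-moment bound on $\E\norm{A}$ and Chebyshev). This additive loss is exactly why the conclusion reads ``$\le\d$'' rather than ``$\le C\e+c^n$''; on the complementary event the single-vector bound of Corollary \ref{c: individual} does survive, since Paley--Zygmund needs only second and fourth moments. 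Second, Theorem \ref{t: SBP} (and hence the statement that the random normal has exponentially large LCD) is proved via the Ess\'een/Hal\'asz method under a subgaussian hypothesis, and in the text only for $\pm1$ entries; for general fourth-moment entries the characteristic-function bound must be rederived, which is done in \cite{RV-square} by symmetrizing $\xi$ and using Paley--Zygmund again to show that $\xi-\xi'$ is spread on a scale determined by $B$. None of this changes the skeleton of your argument, but as written your proof establishes only the subgaussian case.
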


This shows in particular that the median of $s_n(A)$ is at least of order
$n^{-1/2}$. To show that $s_n(A) \sim n^{-1/2}$ with high probability, one has to prove a matching lower bound.
This was done in \cite{RV upper bound} for matrices with subgaussian entries and extended in \cite{V product} to matrices, whose entries have the finite fourth moment.

\medskip

Under stronger moment assumptions, more is known about the
distribution of the largest singular value, and similarly one hopes
to know more about the smallest singular value.

One might then expect that the estimate \eqref{tail} for the
distribution of the smallest singular value of Gaussian matrices
should hold for all subgaussian matrices. Note however that
\eqref{tail} fails for the random sign matrices, since they are
singular with positive probability. Estimating the
probability of singularity for random sign matrices is a longstanding open problem.
Even proving that it converges to $0$ as $n \to \infty$ is a
nontrivial result due to Koml\'os \cite{K 67}. Later Kahn, Koml\'os
and Szemer\'edi \cite{KKS} showed that it is exponentially small:
\begin{equation}            \label{KKS}
  \P \big( \text{random sign matrix $A$ is singular} \big) < c^n
\end{equation}
for some universal constant $c \in (0,1)$. The often conjectured
optimal value of $c$ is $1/2 + o(1)$ \cite{KKS}, and the best known
value $1/\sqrt{2} + o(1)$ is due to Bourgain, Vu, and Wood \cite{BVW}, (see \cite{TV det, TV
singularity} for earlier results).

Spielman and Teng \cite{ST} conjectured that \eqref{tail} should
hold for the random sign matrices up to an exponentially small term
that accounts for their singularity probability:
$$
\P \big( s_n(A) \le \e n^{-1/2} \big) \le \e + c^n.
$$

We prove Spielman-Teng's conjecture up to a coefficient in front of $\e$. Moreover, we show that this type of behavior is common  for all matrices with
subgaussian i.i.d. entries.
 For a bound for random matrices with general i.i.d. entries  see
\cite{RV-square}.

\begin{theorem}[Invertibility: subgaussian]   \label{t: subgaussian}
  Let $A$ be an $n \times n$ matrix whose
  entries are independent copies of a centered subgaussian real random
  variable.
  Then for every $\e \ge 0$, one has
  \begin{equation}                      \label{eq main}
    \P \big( s_n(A) \le \e n^{-1/2} \big)
    \le C\e +  c^n,
  \end{equation}
  where $C > 0$  and $c \in (0,1)$.
\end{theorem}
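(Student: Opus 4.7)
\medskip
\noindent\emph{Proof sketch.} The plan is to reduce the lower bound on $s_n(A)=\inf_{x\in S^{n-1}}\|Ax\|_2$ to a small ball probability estimate for a single random sum, and handle the sphere by a dichotomy between \emph{compressible} and \emph{incompressible} vectors. Call $x\in S^{n-1}$ compressible if it lies within Euclidean distance $\rho$ of some vector supported on at most $\delta n$ coordinates, and otherwise incompressible, for small absolute constants $\rho,\delta\in(0,1)$ to be fixed. Then
\[
\P\bigl(s_n(A)\le\varepsilon n^{-1/2}\bigr)
\le \P\Bigl(\inf_{x\in\Comp}\|Ax\|_2\le\varepsilon n^{-1/2}\Bigr)
+\P\Bigl(\inf_{x\in\Incomp}\|Ax\|_2\le\varepsilon n^{-1/2}\Bigr).
\]

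For the compressible part, a sparse unit vector is essentially a unit vector in $\R^{\delta n}$, so $Ax$ is a sum of at most $\delta n$ columns, each of which behaves like a ``tall rectangular'' subgaussian matrix applied to a unit vector. First I would cover the set of $\delta n$-sparse unit vectors by an $\rho$-net of cardinality at most $\binom{n}{\delta n}(C/\rho)^{\delta n}$ via Lemma \ref{l: volumetric}, combine Corollary \ref{c: individual} with a union bound to get $\inf\|Ay\|_2\ge \eta\sqrt n$ on this net with probability $\ge 1-e^{-cn}$ (provided $\delta$ is small enough so that $\binom{n}{\delta n}(C/\rho)^{\delta n}\nu^n$ decays exponentially), and then use Proposition \ref{p: norm} to control $\|A\|$ and pass from the net to all of $\Comp$ exactly as in the proof of Proposition \ref{p: rectangular}. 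This yields the $c^n$ term.

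For incompressible vectors I would use the standard ``invertibility via distance'' identity: for any matrix $A$ with columns $X_1,\ldots,X_n$,
\[
\inf_{x\in\Incomp}\|Ax\|_2 \ge \frac{c_0}{\sqrt n}\,\min_{1\le k\le n}\dist(X_k,H_k),
\]
where $H_k=\Span(X_j:j\ne k)$. The mechanism is that an incompressible $x$ has at least $c_0 n$ coordinates of size comparable to $n^{-1/2}$, so for at least one such index $k$ one has $|x_k|\|Ax\|_2\ge |x_k|\,\dist(X_k,H_k)$. A union bound then reduces matters to showing $\P(\dist(X_1,H_1)\le c\varepsilon)\le C\varepsilon + c^n$. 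Conditioning on the other columns, fix a unit normal $Y$ to $H_1$; then $\dist(X_1,H_1)=|\langle X_1,Y\rangle|$, and the problem becomes a small ball estimate for the random sum $\sum_j Y_j X_{1,j}$ with the subgaussian coefficients $X_{1,j}$ independent of $Y$.

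The main obstacle, and the heart of the argument, is the small ball estimate for $\langle X_1,Y\rangle$. A pure Paley--Zygmund bound as in Lemma \ref{l: LPRT} only gives a constant, not $C\varepsilon$. To get the $\varepsilon$-factor one needs an inverse Littlewood--Offord statement: the random sum concentrates on scale $\varepsilon$ only if $Y$ has strong arithmetic structure, quantified by a suitable least common denominator $\LCD(Y)$. The proof thus has two interlocking pieces, which I would execute as the final step. First, an \emph{Esseen-type} small ball bound showing
\[
\sup_{v\in\R}\P\bigl(|\langle X_1,Y\rangle - v|\le\varepsilon\bigr)
\lesssim \varepsilon + \frac{1}{\LCD(Y)} + e^{-cn},
\]
obtained by estimating the characteristic function of $\langle X_1,Y\rangle$ and exploiting that $Y$ is itself incompressible (which one first argues, using the compressible-vector bound already proved, since $H_1$ is spanned by $n-1$ columns of a subgaussian matrix). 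Second, one shows that the random normal $Y$ has $\LCD(Y)\ge e^{cn}$ with probability $\ge 1-c^n$, by a covering argument over unit vectors with small LCD and using the small ball estimate for $A^*Y=0$ to rule each of them out. Combining these two ingredients with the distance reduction yields $\P(\dist(X_1,H_1)\le c\varepsilon)\le C\varepsilon + c^n$, which after the union bound and the compressible estimate gives \eqref{eq main}. The LCD-based small ball step is by far the most delicate, as it is where the arithmetic structure promised in Section \ref{sec: small ball} enters.
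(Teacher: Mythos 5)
Your overall architecture is exactly the paper's: the compressible/incompressible decomposition, the rectangular-matrix treatment of compressible vectors, the reduction of the incompressible part to a distance $\dist(X_k,H_k)$, the Esseen/LCD small ball estimate, and the exponential lower bound on the LCD of the random normal obtained by covering the LCD level sets and testing the net points against $A'Z=0$. All of these steps are sound as sketched.

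There is, however, one genuine gap, and it sits at the point the paper singles out as the main difficulty. You pass from the (correct) deterministic inequality $\inf_{x\in\Incomp}\|Ax\|_2\ge c_0 n^{-1/2}\min_{k}\dist(X_k,H_k)$ to the single-column estimate by ``a union bound.'' The union bound over the $n$ indices costs a factor of $n$: it gives $\P\bigl(\min_k\dist(X_k,H_k)\le c\e\bigr)\le n\,(C\e+c^n)$, hence $\P\bigl(s_n(A)\le\e n^{-1/2}\bigr)\le Cn\e+(c')^n$, which after rescaling $\e$ is only the weaker $n^{-3/2}$ bound of \cite{R square}, not \eqref{eq main}. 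The paper's Lemma \ref{l: via distance} is designed precisely to avoid this union bound. One argues by averaging: if $p=\P(\dist(X_n,H_n)<\e)$, then by Chebyshev the event that fewer than $(1-\nu_1)n$ indices satisfy $\dist(X_k,H_k)\ge\e$ has probability at most $p/\nu_1$. Off this event, since every incompressible $x$ has a spread set $\s(x)$ of at least $\nu_1 n$ coordinates with $|x_k|\ge\nu_2 n^{-1/2}$ (Lemma \ref{l: spread}), the set of ``good'' indices must meet $\s(x)$, and any common index $k$ yields $\|Ax\|_2\ge|x_k|\,\dist(X_k,H_k)\ge\nu_2\e n^{-1/2}$ simultaneously for all incompressible $x$. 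This replaces the factor $n$ by the constant $1/\nu_1$ and is what makes the linear-in-$\e$ bound attainable. A secondary point worth making explicit in your last step: the covering of the level sets $S_D=\{x:\,D\le\LCD_\a(x)\le 2D\}$ must use the non-volumetric net of cardinality $(CD/\sqrt{n})^n$ built from integer points in a ball of radius $3D$ (Lemma \ref{l: net}); the standard volumetric net is too large for the union bound against the single-vector estimate of Lemma \ref{l: single} to close.
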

Note that setting $\e=0$ we recover the result of Kahn, Koml\'os and
Szemer\'edi.  Also, note that the question whether \eqref{eq main} holds for random sign matrices with coefficient $C=1$ remains open.

We shall start with an attempt to apply the $\e$-net argument. Let
us consider an $n \times n$ Gaussian matrix, i.e., a matrix with
independent $N(0,1)$ entries. In this case, for any $x \in S^{n-1}$,
the vector $Ax$ has independent $N(0,1)$ coordinates, so it is
distributed like the standard Gaussian vector in $\R^n$. Hence, for
any $t>0$,
\begin{align*}
  \P(\norm{Ax}_2 \le t \sqrt{n})
  &= (2 \pi)^{-n/2} \int_{t \sqrt{n} \cdot B_2^n} e^{-\norm{x}_2^2/2}
  \, dx
  \le (2 \pi)^{-n/2} \vol(t \sqrt{n} \cdot B_2^n)  \\
  &\le (C_1 t)^n.
\end{align*}
 Fix $\e>0$. Let $\NN$ be an $\e$-net
in $S^{n-1}$  of cardinality $|\NN| \le (3/\e)^n$.  Then by the
union bound,
\[
  \P \left ( \exists x \in \NN : \ \norm{Ax}_2 < t n^{1/2} \right )
  \le (3/\e)^n \cdot (C_1 t)^n.
\]
To obtain a meaningful estimate we have to require
\begin{equation}\label{i: t<epsilon}
    (3/\e) \cdot (C_1 t) <1.
\end{equation}
As in Proposition \ref{p: rectangular}, we may assume that $\norm{A}
\le C_0 \sqrt{n}$, since the complement of this event has an
exponentially small probability. Assume that for any $y \in \NN, \
\norm{Ay}_2 \ge t \sqrt{n}$. Given $x \in S^{n-1}$, find $y \in \NN$
satisfying $\norm{x-y}_2<\e$. Then
\[
  \norm{Ax}_2 \ge \norm{Ay}_2 - \norm{A} \cdot \norm{x-y}_2
 \ge  t n^{1/2} - C_0 n^{1/2} \cdot \e.
\]
To obtain a non-trivial lower bound, we have to assume that
\begin{equation}\label{i: epsilon<t}
    t > C_0 \e.
\end{equation}
Unfortunately, the system of inequalities \eqref{i: t<epsilon} and
\eqref{i: epsilon<t} turns out to be inconsistent, and the $\e$-net argument
fails for the square matrix. Nevertheless, a part of this idea can
be salvaged. Namely, if the cardinality of the $\e$-net satisfies a
better estimate
\begin{equation}\label{i: small net}
    |\NN| \le (\a/\e)^n
\end{equation}
for a small constant $\a>0$, then \eqref{i: t<epsilon} is replaced
by $(\a/\e) \cdot (C_1 t) <1$, and the system \eqref{i: t<epsilon},
\eqref{i: epsilon<t} becomes consistent. Although the estimate
\eqref{i: small net} is impossible for the whole sphere, it can be
obtained for a small part of it. This becomes the first ingredient
of our strategy: small parts of the sphere will be handled by the
$\e$-net argument. However, the ``bulk'' of the sphere has to be
handled differently.

The proof of Theorem \ref{t: subgaussian} for random matrices with
 i.i.d. subgaussian entries having a bounded density is presented below.

\subsection{Conditional argument}
To handle the ``bulk'', we have to produce an estimate which holds
for all vectors in it simultaneously, without taking the union
bound. Let $x \in S^{n-1}$ be a vector such that $|x_1|\ge
n^{-1/2}$. Denote the columns of the matrix $A$ by $X_1 \etc X_n$,
and let
\[
  H_j: =\text{span} (X_k \mid k \neq j).
\]
Then $Ax= \sum_{k=1}^n x_k X_k$, so
\begin{equation}  \label{i: norm-distance}
  \norm{Ax}_2 \ge \dist (Ax, H_1)= \dist (x_1 X_1, H_1)
  \ge n^{-1/2} \dist (X_1,H_1).
\end{equation}
Note that the right hand side is independent of $x$. Therefore it
provides a uniform lower bound for all $x$ such that $|x_1| \ge
n^{-1/2}$. Since any vector $x \in S^{n-1}$ has a coordinate with
absolute value greater than $n^{-1/2}$, we can try to extend this
bound to the whole sphere. This approach immediately runs into a
problem: we don't know {\em a priori} which of the coordinates of
$x$ is big. To modify this approach we shall pick a {\em random}
coordinate. To this end we have to know that the random coordinate
is big with relatively high probability. This is true for vectors,
which look like the vertices of a discrete cube, but is obviously
false for vectors with small support, i.e. a small number of non-zero coordinates. This observation leads us to
the first decomposition of the sphere:
\begin{definition}[Compressible and incompressible vectors]   \label{d: compressible}
  Fix $\d, \rho \in (0,1)$.
  A vector $x \in \R^n$ is called {\em sparse} if
  $|\supp(x)| \le \d n$. (Here $\supp(x)$ means the set of non-zero coordinates of $x$.)
  A vector $x \in S^{n-1}$ is called {\em compressible} if $x$
  is within Euclidean distance $\rho$ from the set of
  all sparse vectors.
  A vector $x \in S^{n-1}$ is called {\em incompressible}
  if it is not compressible.
  The sets of sparse, compressible and incompressible vectors
  will be denoted by $\Sparse$, $\Comp$
  and $\Incomp$ respectively.
\end{definition}
Using the decomposition of the sphere $S^{n-1} = \Comp \cup
\Incomp$, we break the invertibility problem into two subproblems,
for compressible and incompressible vectors:
\begin{multline}                        \label{i: two terms}
\P \big( s_n(A) \le \e n^{-1/2}  \big) \\
  \le \P \big( \inf_{x \in \Comp} \|A x\|_2 \le \e n^{-1/2}
        \big) \\
    + \P \big( \inf_{x \in \Incomp} \|A x\|_2 \le \e n^{-1/2}
        \big).
\end{multline}

On the set of compressible vectors, we obtain an inequality, which is much stronger than we need.

\begin{lemma}[Invertibility for compressible vectors]       \label{l: compressible}
  Let $A$ be a random matrix as in Theorem~\ref{t: subgaussian},
  Then there exist $\d, \rho, c_1, c_2 > 0$  such that
  $$
  \P \big( \inf_{x \in \Comp} \|A x\|_2 \le c_1 n^{1/2}
     \big)
  \le e^{-c_2 n}.
  $$
\end{lemma}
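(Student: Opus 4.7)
The plan is to mimic the argument of Proposition~\ref{p: rectangular}, but with the sphere $S^{n-1}$ replaced by the effectively low-dimensional set $\Comp$. Compressible vectors lie near the union of the unit spheres of all $\lceil \d n\rceil$-dimensional coordinate subspaces, so an $\e$-net on that union has cardinality at most $\binom{n}{\lceil \d n\rceil}\cdot(3/\e)^{\d n}$ instead of $(3/\e)^n$. For small $\d$, this gain is enough to absorb the union bound from Corollary~\ref{c: individual}.

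First I would construct a net. Using Lemma~\ref{l: volumetric} on each coordinate subspace of dimension $\lceil \d n\rceil$, we obtain a set $\NN_0\subset \Sparse\cap S^{n-1}$ with
$$
|\NN_0|\le \binom{n}{\lceil \d n\rceil}\left(\tfrac{3}{\e}\right)^{\d n}\le \left(\tfrac{3e}{\d\e}\right)^{\d n}
$$
that is an $\e$-net for $\Sparse\cap S^{n-1}$. If $x\in \Comp$ and $x_0$ is a sparse vector with $\|x-x_0\|_2\le\rho$, then $\bigl|\|x_0\|_2-1\bigr|\le\rho$, and the normalization $\tilde x_0=x_0/\|x_0\|_2$ is still sparse and satisfies $\|x-\tilde x_0\|_2\le 2\rho$. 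Picking $y\in \NN_0$ with $\|y-\tilde x_0\|_2<\e$ gives $\|x-y\|_2<2\rho+\e$.

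Next I would apply Corollary~\ref{c: individual} (with $N=n$) and take the union bound,
$$
\P\bigl(\exists\,y\in\NN_0:\ \|Ay\|_2<\eta\sqrt{n}\bigr)\le |\NN_0|\cdot \nu^n\le \bigl((3e/(\d\e))^{\d}\cdot\nu\bigr)^{n}.
$$
Fixing $\eta,\nu$ from Corollary~\ref{c: individual} and choosing $\d$ sufficiently small depending on $\nu$ and $\e$, the base above becomes strictly less than $1$. Combined with the bound $\P(\|A\|>C_0\sqrt{n})\le e^{-c_0 n}$ from Proposition~\ref{p: norm}, outside an event of probability at most $e^{-c_2 n}$ we have $\|A\|\le C_0\sqrt{n}$ and simultaneously $\|Ay\|_2\ge \eta\sqrt{n}$ for every $y\in\NN_0$. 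On this good event, for any $x\in\Comp$ and the associated $y\in\NN_0$,
$$
\|Ax\|_2\ge \|Ay\|_2-\|A\|\cdot\|x-y\|_2\ge \bigl(\eta-C_0(2\rho+\e)\bigr)\sqrt{n}.
$$
Finally I would set $\e=\rho=\eta/(8C_0)$ so that the right-hand side is at least $(\eta/2)\sqrt{n}$, giving the lemma with $c_1=\eta/2$.

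The only subtlety is the order in which constants must be fixed: $\eta$ and $\nu$ come from Corollary~\ref{c: individual}; then $\rho,\e$ are chosen so the approximation step closes; then $\d$ is chosen small enough that the entropy factor $(3e/(\d\e))^{\d}$ is dominated by $\nu^{-1}$. None of these steps is technically hard, and the heart of the argument is the geometric observation that the compressible part of the sphere genuinely has dimension $\lesssim \d n\ll n$.
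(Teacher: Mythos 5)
Your proof is correct and follows essentially the same route as the paper's (sketched) argument: approximate each compressible vector by a normalized sparse vector, cover the union of the $\lceil\d n\rceil$-dimensional coordinate spheres by a net of cardinality $\binom{n}{\lceil\d n\rceil}(3/\e)^{\d n}$, apply the single-vector bound of Corollary~\ref{c: individual} with a union bound, and close with the operator-norm approximation step, choosing $\d$ last so the entropy factor is beaten by $\nu$. The only cosmetic difference is that the paper invokes Proposition~\ref{p: rectangular} on each $n\times\lceil\d n\rceil$ restriction and then takes the union bound over coordinate subspaces, whereas you unpack that proposition and run the net argument directly on the union of coordinate spheres; the two are the same argument.
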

\begin{proof}[Sketch of the proof]
 Any compressible vectors is close to a coordinate subspace of a
small dimension $\d n$. The restriction of our random matrix $A$
onto such a subspace is a random {\em rectangular} $n \times \d n$
matrix. Such matrices are well invertible outside of an event of exponentially small
probability, provided that $\d$ is small enough (see Proposition~\ref{p: rectangular}). By taking the
union bound over all coordinate subspaces, we  deduce the
invertibility of the random matrix on the set of compressible
vectors.
\end{proof}
We shall fix $\d$ and $\rho$ as in Lemma \ref{l: compressible} for
the rest of the proof.

The incompressible vectors are well spread in the sense that they
have many coordinates of the order $n^{-1/2}$. This observation will
allow us to realize the scheme described at the beginning of this
section.

\begin{lemma}[Incompressible vectors are spread]    \label{l: spread}
  Let $x \in \Incomp$.
  Then there exists a set $\s(x) \subseteq \{1, \ldots, n\}$
  of cardinality $|\s(x)| \ge \nu_1 n$ and such that
  $$
  \frac{\nu_2}{\sqrt{n}} \le |x_k| \le \frac{\nu_3}{\sqrt{ n}}
  \qquad \text{for all $k \in \s$.}
  $$
  Here $0< \nu_1, \nu_2<1$ and $\nu_3>1$ are  constants depending only on the parameters $\d, \rho$.
\end{lemma}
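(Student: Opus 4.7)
The plan is to run a simple pigeon-hole argument on the coordinates of $x$, exploiting two constraints: the normalization $\|x\|_2 = 1$ forces only few coordinates to be much larger than $1/\sqrt n$, while incompressibility forces only few coordinates to be much smaller. The medium range then absorbs the bulk.

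First I would fix $\nu_3$ large enough that the ``large'' set
\[
  B(x) = \{ k : |x_k| > \nu_3/\sqrt n \}
\]
is automatically sparse. Since $\sum_k x_k^2 = 1$, at most $n/\nu_3^2$ coordinates can satisfy $x_k^2 > \nu_3^2/n$, so taking $\nu_3^2 \ge 2/\delta$ forces $|B(x)| \le \delta n/2$. This step is done first because $B(x)$ will serve as the support of a sparse approximant in the next step.

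Next I would argue by contradiction that the medium set
\[
  \sigma(x) = \{ k : \nu_2/\sqrt n \le |x_k| \le \nu_3/\sqrt n \}
\]
satisfies $|\sigma(x)| \ge \nu_1 n$. Assuming $|\sigma(x)| < \nu_1 n$, define $y \in \R^n$ by $y_k = x_k$ for $k \in B(x)$ and $y_k = 0$ otherwise; then $|\supp y| \le \delta n/2 \le \delta n$, so $y \in \Sparse$. Splitting $\|x-y\|_2^2$ into the contribution from $\sigma(x)$ (bounded by $|\sigma(x)| \cdot \nu_3^2/n < \nu_1 \nu_3^2$ via the upper bound on the medium coordinates) and from the complement of $B(x) \cup \sigma(x)$ (whose coordinates all satisfy $|x_k| < \nu_2/\sqrt n$, contributing at most $\nu_2^2$), one obtains $\|x-y\|_2^2 < \nu_1 \nu_3^2 + \nu_2^2$. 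Choosing $\nu_2 = \rho/\sqrt 2$ and $\nu_1 = \rho^2/(2\nu_3^2)$ makes this at most $\rho^2$, so $\dist(x,\Sparse) \le \rho$, contradicting incompressibility.

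No single step is an obstacle; the lemma is a short combinatorial warm-up used to set up the conditional argument that follows. The only structural point worth noting is the ordering: one must peel off the large coordinates \emph{before} treating the small ones, because the natural sparse approximant (the truncation of $x$ to its large coordinates) only lies in $\Sparse$ once $\nu_3$ has been chosen large enough to bound $|B(x)|$ by $\delta n$.
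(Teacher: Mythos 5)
Your proof is correct, and it is precisely the standard argument (the paper leaves this lemma as an exercise, and the intended proof is exactly this truncation/pigeonhole scheme: bound the large set via $\|x\|_2=1$, then show that if the medium set were small the truncation to the large coordinates would be a sparse vector within distance $\rho$ of $x$). The constants $\nu_3^2=2/\delta$, $\nu_2=\rho/\sqrt2$, $\nu_1=\rho^2\delta/4$ satisfy all the stated requirements and depend only on $\delta,\rho$, so nothing further is needed.
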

 We leave the proof of this lemma to the reader.

The main difficulty in implementing the distance bound like
\eqref{i: norm-distance} is to avoid taking the union bound. We
achieve this in the proof of the next lemma by a random choice of a  coordinate.
\begin{lemma}[Invertibility via distance]  \label{l: via distance}
  Let $A$ be a random matrix with i.i.d. entries.
  Let $X_1,\ldots,X_n$ denote the column vectors of $A$, and let $H_k$ denote
  the span of all column vectors except the $k$-th one:
$
  H_j =\text{span} (X_k \mid k \neq j).
$
  Then for  every $\e > 0$, one has
  \begin{equation}                              \label{eq: via distance}
    \P \big( \inf_{x \in \Incomp} \|A x\|_2 < \e \nu_2 n^{-1/2} \big)
    \le \frac{1}{\nu_1} \cdot
          \P \big( \dist( X_n, H_n) < \e \big).
  \end{equation}
\end{lemma}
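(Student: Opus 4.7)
The plan is to combine the pointwise distance lower bound $\|Ax\|_2 \ge |x_k| \cdot \dist(X_k, H_k)$ with Lemma~\ref{l: spread} (which guarantees many large coordinates of an incompressible $x$) and then exploit the exchangeability of the columns to replace a union bound over $k$ by an averaging argument. This avoids losing the factor of $n$ that a naive union bound would give.

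First I would fix the bad event $\mathcal{B} = \{\inf_{x \in \Incomp} \|Ax\|_2 < \e \nu_2 n^{-1/2}\}$ and, for each coordinate $k$, the single-column event $\mathcal{E}_k = \{\dist(X_k, H_k) < \e\}$. Since the entries of $A$ are i.i.d., the joint law of $(X_1,\ldots,X_n)$ is permutation invariant, so $\P(\mathcal{E}_k)$ is the same for every $k$ and equals $\P(\dist(X_n,H_n) < \e)$.

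Next I would show the deterministic containment
\[
  \one_{\mathcal{B}} \le \frac{1}{\nu_1 n} \sum_{k=1}^n \one_{\mathcal{E}_k}.
\]
To see this, suppose $\mathcal{B}$ occurs and pick $x \in \Incomp$ with $\|Ax\|_2 < \e \nu_2 n^{-1/2}$. Writing $Ax = \sum_j x_j X_j$ gives $\|Ax\|_2 \ge \dist(Ax, H_k) = |x_k|\,\dist(X_k,H_k)$ for each $k$. For any $k \in \s(x)$, Lemma~\ref{l: spread} yields $|x_k| \ge \nu_2/\sqrt n$, hence
\[
  \dist(X_k, H_k) \le \frac{\|Ax\|_2}{|x_k|} < \e,
\]
so $\mathcal{E}_k$ holds for every $k$ in the set $\s(x)$ of size at least $\nu_1 n$. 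The displayed inequality follows.

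Finally, taking expectations on both sides and using the exchangeability observation gives
\[
  \P(\mathcal{B}) \le \frac{1}{\nu_1 n}\sum_{k=1}^n \P(\mathcal{E}_k) = \frac{1}{\nu_1}\,\P\bigl(\dist(X_n, H_n) < \e\bigr),
\]
which is exactly \eqref{eq: via distance}. The only subtle point is recognizing that although the ``good'' coordinate set $\s(x)$ depends on $x$ (and hence on the randomness of $A$), one does not need to commit to a single $k$ in advance: the averaging over $k$ is what lets us dodge the union bound. I do not expect any genuine obstacle here — the argument is a clean Fubini/exchangeability trick once the spread property of incompressible vectors is in hand.
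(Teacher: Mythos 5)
Your proof is correct and is essentially the paper's argument: the paper defines the event $U$ that fewer than $\nu_1 n$ indices satisfy $\dist(X_k,H_k)<\e$, bounds $\P(U^c)\le p/\nu_1$ by the first-moment (Markov) bound, and shows via Lemma~\ref{l: spread} that the bad event forces $\s(x)$ and the set of ``close'' columns to overlap — which is exactly your deterministic containment $\one_{\mathcal{B}} \le \frac{1}{\nu_1 n}\sum_k \one_{\mathcal{E}_k}$ followed by taking expectations. The two write-ups differ only in whether the counting step is phrased directly or contrapositively.
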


\begin{proof}
Denote
$$
p := \P \big( \dist( X_k, H_k) < \e \big).
$$
 Note that since the entries of the matrix $A$ are i.i.d., this
 probability does not depend on $k$.
Then
$$
\E \big| \{ k :\, \dist(X_k, H_k) < \e \} \big|
  = n p.
$$
Denote by $U$ the event that the set $\s_1 := \{ k :\, \dist(X_k,
H_k) \ge \e \}$ contains more than $(1-\nu_1)n$ elements. Then by
Chebychev's inequality,
$$
\P(U^c) \le \frac{p}{\nu_1}.
$$
Assume that the event $U$ occurs.  Fix any incompressible vector $x$
and let $\s(x)$ be the set from Lemma \ref{l: spread}.
 Then $|\s_1| + |\s(x)| > (1-\nu_1)n + \nu_1 n = n$, so the sets $\s_1$
and $\s(x)$ have nonempty intersection. Let $k \in \s_1 \cap \s(x)$,
so
\[
  |x_k| \ge \nu_2 n^{-1/2} \quad \text{and} \quad
  \dist (X_k, H_k) \ge  \e.
\]
 Writing $A x = \sum_{j=1}^n x_j X_j$, we get
\begin{align*}
\|Ax\|_2
  &\ge  \dist (Ax, H_k)
  =  \dist (x_k X_k, H_k)
  =  |x_k| \, \dist (X_k, H_k) \\
  &\ge \nu_2 n^{-1/2} \cdot \e.
\end{align*}

Summarizing, we have shown that
$$
\P \big( \inf_{x \in \Incomp} \|A x\|_2 < \e \nu_2 n^{-1/2} \big)
  \le \P(U^c)
  \le \frac{p}{\nu_1}.
$$
This completes the proof.
\end{proof}

Lemma~\ref{l: via distance} reduces the invertibility problem to a
lower bound on the distance between a random vector and a random
subspace. Now we reduce bounding the distance to a small ball
probability estimate.

Let $X_1,\ldots,X_n$ be the column vectors of $A$. Let $Z$ be any
unit vector orthogonal to $X_1,\ldots, X_{n-1}$. We call it a {\em
random normal}.
We clearly have
\begin{equation}                                    \label{i: dist}
  \dist(X_n,H_n) \ge |\< Z, X_n\> |.
\end{equation}
The vector $Z$ depends only on $X_1 \etc X_{n-1}$,  so $Z =: (a_1,\ldots,a_n)$  and $X_n =:
(\xi_1,\ldots,\xi_n)$ are independent.
Condition on the vectors $X_1
\etc X_{n-1}$. Then the vector $Z$ can be viewed as fixed, and the
problem reduces to the small ball probability estimate for a linear
combination of independent random variables
$$
\< Z, X_n\> = \sum_{k=1}^n a_k \xi_k.
$$

Assume for a moment that the distribution of a random variable $\xi$ is absolutely
continuous with bounded density. Then
\begin{equation}  \label{i: all t}
  \P(|\xi|<t) \le C't \quad \text{for any } t>0.
\end{equation}
This estimate can be extended to a linear combination of independent
copies of $\xi$. Therefore,
\[
  \P (|\pr{Z}{X_n}|< t \mid Z) \le Ct.
\]
Integrating over $X_1 \etc X_{n-1}$, we obtain
\[
  \P (|\pr{Z}{X_n}|< t) \le Ct.
\]
Thus, combining this estimate with Lemma \ref{l: via distance}, we
prove that
\[
    \P \big( \inf_{x \in \Incomp} \|A x\|_2 < \e \nu_2 n^{-1/2} \big)
    \le C \e.
\]
Then \eqref{i: two terms} and Lemma \ref{l: compressible} imply
Theorem \ref{t: subgaussian} in this case even without the additive term $c^n$.

\section{Arithmetic structure and the small ball probability}  \label{sec: small ball}
To prove Theorem \ref{t: subgaussian} in the previous section, we
used the small ball probability estimate \eqref{i: all t}. However,
this estimate does not hold for a general subgaussian random
variable, and in particular for any random variable having an atom
at $0$.

Despite this, a linear combination $\sum_{k=1}^n a_k \xi_k$ of
independent copies of a subgaussian random variable $\xi$ obeys an
estimate similar to \eqref{i: all t} for a {\em typical} vector
$a=(a_1 \etc a_n)$ up to a certain threshold. It is easy to see that
this threshold should depend on the vector $a \in S^{n-1}$. Indeed, assume that
$\xi$ is the random $\pm 1$ variable. Then for
\[
  a^{(1)}= \left (\frac{1}{\sqrt{2}}, \frac{1}{\sqrt{2}}, 0 \etc 0 \right ),
   \qquad
  \P \left( \sum_{k=1}^n a_k \xi_k=0 \right)= \frac{1}{2}.
\]
This singular behavior is due to the fact that the vector $a^{(1)}$
is sparse. If we choose the vector $a$, which is far from the sparse
ones, i.e. an incompressible vector, the small ball probability may
be significantly improved. Consider for example, the vector
\[
  a^{(2)}= \left (\frac{1}{\sqrt{n}}, \frac{1}{\sqrt{n}} \etc
    \frac{1}{\sqrt{n}} \right ).
\]
Then by the Berry--Ess\'{e}en Theorem,
\[
  \P \left ( \left |
    \sum_{k=1}^n \frac{1}{\sqrt{n}} \xi_k \right | \le t \right )
  \le C \left (t+ \frac{1}{\sqrt{n}} \right )
\]
This estimate cannot be improved, since for an even $n$,
\[
  \P \left ( \sum_{k=1}^n \frac{1}{\sqrt{n}} \xi_k=0 \right )
  \ge \frac{c}{\sqrt{n}}.
\]
The  coordinates of the vector $a^{(2)}$ are the same, which results
in a lot of cancelations in the random sum $\sum_{k=1}^n a_k \xi_k$.
If the arithmetic structure of the coordinates of the vector $a$ is
less rigid, the small ball  probability can be improved even
further. For example, for the (not normalized) vector
\[
  a^{(3)}= \left (\frac{1+1/n}{\sqrt{n}}, \frac{1+2/n}{\sqrt{n}}
                   \etc \frac{1+n/n}{\sqrt{n}} \right ),
   \qquad
  \P \left( \sum_{k=1}^n a_k \xi_k=0 \right) \sim n^{-3/2}.
\]

 Determining the influence of the arithmetic structure of
the coordinates of a vector $a$ on the small ball probability for
the random sum $\sum_{k=1}^n a_k \xi_k$ became known as the {\em
Littlewood--Offord Problem}. It was investigated by
Littlewood and Offord \cite{LO}, Erd\"{o}s \cite{E 45}, S\'{a}rc\"{o}zy and Szem\'{e}redi \cite{SSz},
etc. Recently Tao and Vu \cite{TV} put forward the inverse
Littlewood--Offord theorems, stating that the large value of the
small ball probability implies a rigid arithmetic structure.
 The inverse Littlewood--Offord theorems are extensively discussed in \cite{TV book}, see also \cite{NV LO} for current results in this direction.
 We will
need a result of this type for the conditional argument to
compensate for the lack of the bound \eqref{i: all t}.

The additive structure of a sequence $a = (a_1,\ldots,a_n)$ of real
numbers $a_k$ can be described in terms of the shortest arithmetic
progression into which it  embeds. This length is conveniently
expressed as the  least common denominator of $a$, defined as
follows:
\[
  \text{lcd}(a) := \inf \Big\{ \theta > 0: \
       \theta a \in \Z^n \setminus \{0\}  \Big\}.
\]

For the vector $a^{(2)}$,
\[
  \text{lcd}(a^{(2)})=\sqrt{n} \sim 1 \Big/\P  \left( \sum_{k=1}^n a_k
  \xi_k=0 \right).
\]
A similar phenomenon occurs for the vector $a^{(3)}$:
\[
  \text{lcd}(a^{(3)})=n^{3/2} \sim 1 \Big/\P  \left( \sum_{k=1}^n a_k
  \xi_k=0 \right).
\]
This suggests that the least common denominator of the sequence
controls the small ball probability. However, in the case when $t
>0$, or when the random variable $\xi$ is not purely discrete, the
precise inclusion $\theta a \in \Z^n\setminus \{0\}$ loses its
meaning. It should be relaxed to measure the closeness of the vector
$\theta a$ to the integer lattice. This leads us to the definition
of the {\em essential least common denominator}.

 Fix a parameter $\g \in (0,1)$.
 For $\a>0$   define
$$
  \LCD_{\a}(a)
  := \inf \Big\{ \theta > 0: \; \dist (\theta a, \Z^n) <
         \min(\g\|\theta a\|_2,\a) \Big\}.
$$
The requirement that the distance is smaller than $\g\|\theta a\|_2$
forces us to consider only non-trivial integer points as approximations
of $\theta a$ -- only those in a small aperture cone around the
direction of $a$ (see the picture below).

  \begin{center}
            \includegraphics[height=7cm]{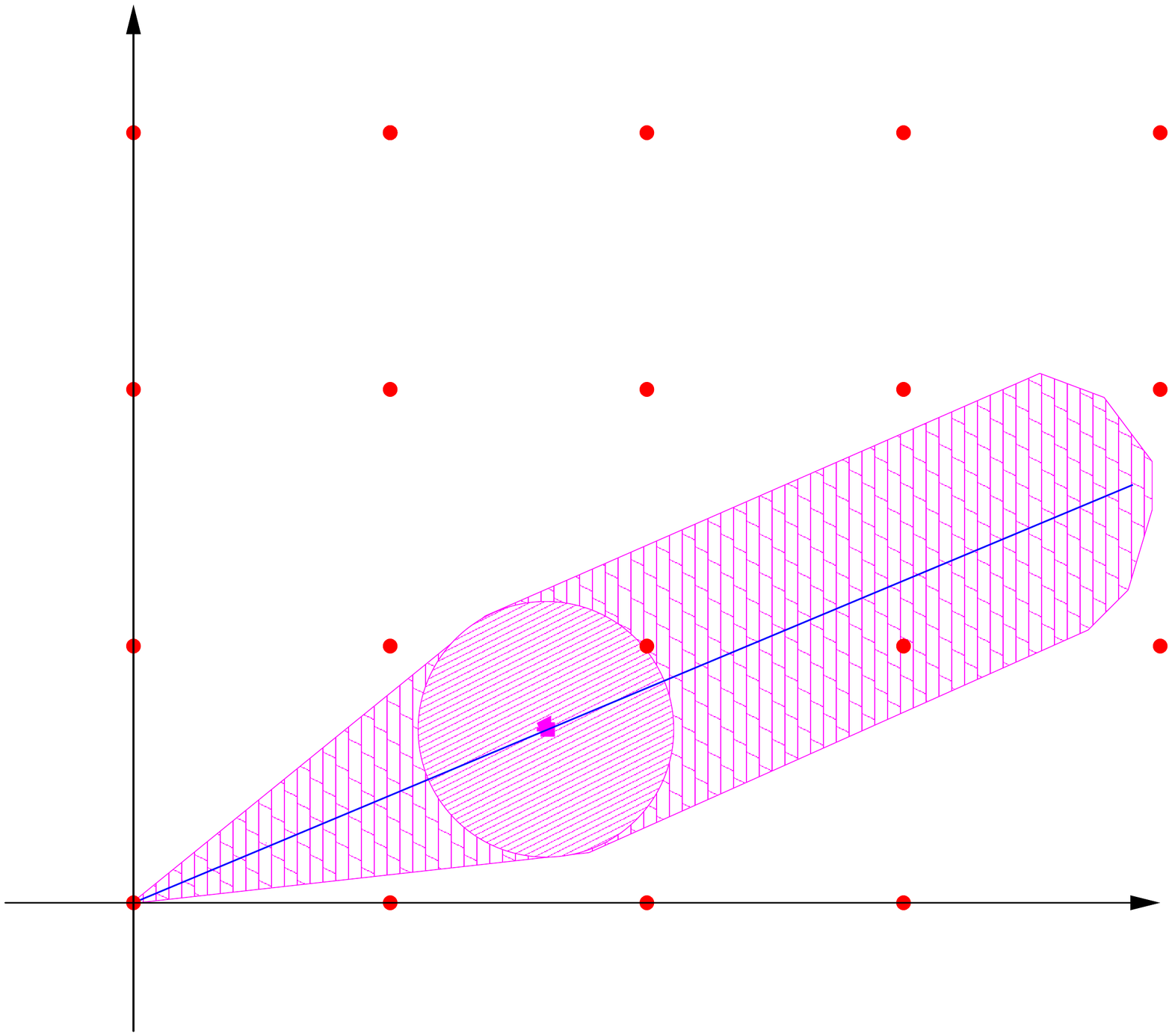}
  \end{center}

 One typically uses this definition with $\g$ a
small constant, and for $\a = c \sqrt{n}$ with a small constant
$c>0$. The inequality $\dist(\theta a, \Z^n) < \a$ then yields that
most coordinates of $\theta a$ are within a small constant distance
from integers. This choice would allow us to conclude that the least
common denominator of any incompressible vector is of order at least $\sqrt{n}$.
Let us formulate this statement precisely.
\begin{lemma}  \label{l: incomp LCD}
   There exist constants $\g>0$ and $\l>0$ depending only on the compressibility parameters $\d, \rho$
   such that any incompressible vector $a$ satisfies $\LCD_{\a}(a) \ge \l \sqrt{n}$.
\end{lemma}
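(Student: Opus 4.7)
The plan is to argue directly from Lemma \ref{l: spread} that, for $\theta$ below a fixed multiple of $\sqrt{n}$, the vector $\theta a$ cannot come close to a nonzero integer point in the cone $\{y : \dist(y,\Z^n) < \g \|y\|_2\}$. Since $\|a\|_2=1$, we have $\|\theta a\|_2 = \theta$, so the defining condition for $\LCD_\a(a)$ becomes $\dist(\theta a,\Z^n) < \min(\g\theta,\a)$, and in particular $\dist(\theta a,\Z^n) < \g\theta$.

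First, I would invoke Lemma \ref{l: spread} to pick a set $\s = \s(a) \subseteq \{1,\dots,n\}$ with $|\s| \ge \nu_1 n$ such that $\nu_2/\sqrt{n} \le |a_k| \le \nu_3/\sqrt{n}$ for all $k \in \s$. Now suppose, for contradiction, that some $\theta$ with $\theta < \l\sqrt{n}$ satisfies the defining inequality of $\LCD_\a(a)$; I will fix $\l := 1/(2\nu_3)$. For every $k \in \s$ this forces
\[
  |\theta a_k| \le \theta \cdot \frac{\nu_3}{\sqrt{n}} < \l\nu_3 = \frac{1}{2},
\]
so the nearest integer to $\theta a_k$ is $0$ and $\dist(\theta a_k,\Z) = |\theta a_k| \ge \theta \nu_2/\sqrt{n}$.

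Summing the squared coordinate-wise distances over $\s$ gives
\[
  \dist(\theta a,\Z^n)^2 \ge \sum_{k\in\s} |\theta a_k|^2
    \ge |\s| \cdot \frac{\theta^2 \nu_2^2}{n}
    \ge \nu_1 \nu_2^2 \, \theta^2.
\]
Thus $\dist(\theta a,\Z^n) \ge \sqrt{\nu_1}\,\nu_2 \,\theta = \sqrt{\nu_1}\,\nu_2\,\|\theta a\|_2$. Choosing $\g := \tfrac{1}{2}\sqrt{\nu_1}\,\nu_2$ (a constant depending only on $\d,\rho$), this contradicts the required inequality $\dist(\theta a,\Z^n) < \g\|\theta a\|_2$. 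Hence no such $\theta < \l\sqrt{n}$ exists, and $\LCD_\a(a) \ge \l\sqrt{n}$.

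There is no real obstacle here; the only thing to keep track of is the dependence of the constants $\l$ and $\g$ on the compressibility parameters $\d,\rho$, which enters only through $\nu_1,\nu_2,\nu_3$ from Lemma \ref{l: spread}. Note that the $\a$ in $\min(\g\|\theta a\|_2,\a)$ plays no role in this lower bound — the cone condition alone already forces the conclusion — so the same lemma would hold for every $\a > 0$.
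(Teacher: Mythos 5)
Your proof is correct and follows essentially the same route as the paper: restrict to the spread set $\s(a)$ from Lemma \ref{l: spread}, show that for $\theta<\l\sqrt{n}$ the nearest integer to each coordinate $\theta a_k$, $k\in\s(a)$, is $0$, and conclude $\dist(\theta a,\Z^n)\ge \nu_2\sqrt{\nu_1}\,\theta>\g\theta$, a contradiction. Your version is in fact slightly cleaner — by choosing $\l=1/(2\nu_3)$ you get $|\theta a_k|<1/2$ directly and avoid the paper's intermediate set $I(a)$ of coordinates where $\theta a$ is close to $z$ — and your closing observation that $\a$ plays no role is accurate.
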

\begin{proof}
 Assume that $a$ is an incompressible vector, and let $\s(a)$ be the set defined in Lemma \ref{l: spread}.
 If $\LCD_{\a}(a) <\l \sqrt{n}$, then
 \[
  \norm{\theta a - z}_2< \g \theta< \g \l \sqrt{n} \qquad \text{for some } \theta \in (0,\l \sqrt{n}), \ z \in \Z^n.
 \]
 Let $I(a)$ be the set of all $j\in \{1 \etc n\}$ such that
 \[
   |\theta a_j-z_j|< \frac{2\g \l}{\nu_1}.
 \]
 The previous inequality implies that $|I(a)|>(1-\nu_1/2)n$. Therefore, for the set $J(a)=I(a) \cap \s(a)$, we have
 \[
  |J(a)| > \frac{\nu_1}{2} n.
 \]
 For any $j \in J(a)$, we have
 \[
  |z_j| < \theta |a_j|+ \frac{2\g \l}{\nu_1}
  < \l \sqrt{n} \cdot \frac{\nu_3}{\sqrt{n}} +\frac{2\g \l}{\nu_1} <1,
 \]
 provided that $\l$ is chosen so that $\l \left(\nu_3+\frac{2\g}{\nu_1} \right)<1$.
 Since $z \in \Z$, this means that $z_j=0$. Finally, this implies
 \[
  \norm{\theta a-z}_2 \ge \left( \sum_{j \in J(a)} \theta^2 a_j^2 \right)^{1/2}
  >\theta \nu_2 \sqrt{\frac{\nu_1}{2}}> \g \theta
 \]
 for $\g < \nu_2 \sqrt{\nu_1/2}$.
 This contradicts the assumption that $\LCD_{\a}(a) <\l \sqrt{n}$.
\end{proof}

We fix $\g$ satisfying Lemma \ref{l: incomp LCD} for the rest of the proof.

The following theorem gives a bound on the small ball probability
for a random sum  in terms of the additive structure of $a$. The
less structure $a$ has, the bigger its least common denominator is,
and the smaller  the small ball probability is.

\begin{theorem}[Small ball probability]                         \label{t: SBP}
  Let $\xi_1, \ldots, \xi_n$ be independent copies of a  centered
  subgaussian random variable $\xi$ of unit variance.
  Consider a sequence $a = (a_1,\ldots,a_n) \in S^{n-1}$.
  Then, for every $\a > 0$, and for
  $$
    \e \ge \frac{(4/\pi)}{\LCD_{\a}(a)},
  $$
  we have
  $$
    \P \left ( \left | \sum_{k=1}^n a_k \xi_k \right |
               \le \e \right )
  \le C \e + C e^{- c\a^2}.
  $$
\end{theorem}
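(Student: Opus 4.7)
The plan is Fourier-analytic: apply Esseen's concentration inequality to reduce the small ball probability for $S = \sum_{k=1}^n a_k \xi_k$ to an integral of its characteristic function,
$$\P(|S| \le \e) \le C\e \int_{-1/\e}^{1/\e} |\phi_S(t)| \, dt,$$
and then rescale via $t = 2\pi\tau$ so that the natural period matches the $\Z^n$ convention built into the definition of $\LCD_\a$. The hypothesis $\e \ge (4/\pi)/\LCD_\a(a)$ ensures that the integration range $|\tau| \le 1/(2\pi\e)$ lies well inside $(-\LCD_\a(a), \LCD_\a(a))$, which is where we shall have quantitative control on $|\phi_S|$.

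The core of the argument is to bound $|\phi_S(2\pi\tau)|^2 = \prod_k \E\cos(2\pi\tau a_k \tilde\xi_k)$, where $\tilde\xi_k = \xi_k - \xi_k'$ is the symmetrization with an independent copy. Using the pointwise inequality $1-\cos(2\pi z) \ge 8\,\dist(z,\Z)^2$ together with $1-x\le e^{-x}$, each factor is at most $\exp\bigl(-c\,\E\dist(\tau a_k\tilde\xi_k,\Z)^2\bigr)$. The crucial step is then to convert this averaged lattice distance into a deterministic quantity: exploiting that $\tilde\xi$ assigns positive mass to a fixed interval of positive length (a consequence of unit variance combined with subgaussian tail decay), one obtains after summing in $k$
$$\sum_k \E\dist(\tau a_k\tilde\xi_k,\Z)^2 \gtrsim \dist(\tau a,\Z^n)^2.$$

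At this point $\LCD$ enters. For every $0 < \tau < \LCD_\a(a)$ the defining inequality together with $\|a\|_2 = 1$ gives $\dist(\tau a, \Z^n) \ge \min(\g\tau, \a)$, hence
$$|\phi_S(2\pi\tau)| \le \exp\bigl(-c'\min(\tau^2, \a^2)\bigr)$$
throughout the integration range. Splitting the integral at $|\tau| = \a$, the inner piece is controlled by a Gaussian integral (bounded by a constant and contributing $O(\e)$ after multiplication by the $\e$-prefactor), while the outer piece contributes $(1/\e)\,e^{-c'\a^2}$, giving the advertised $C\e + C e^{-c\a^2}$.

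The main obstacle is the deterministic-to-probabilistic passage in the second paragraph. Because $z\mapsto\dist(z,\Z)$ is periodic and nonlinear, one cannot simply pull $\tilde\xi$ out of the distance; instead one must use the spread of the law of $\tilde\xi$ (as opposed, say, to a Dirac mass at a rational point) to show that averaging over this distribution recovers the full lattice distance $\dist(\tau a_k,\Z)^2$ up to a constant. It is precisely here that the subgaussian hypothesis with unit variance is essentially used, and the rest of the argument is standard Fourier bookkeeping.
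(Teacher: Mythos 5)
Your overall strategy is the same as the paper's: Esseen's inequality reduces the problem to bounding $\int|\phi_S|$ over a window of length of order $1/\e$, each factor of the characteristic function is dominated by $\exp(-c\,\dist(\cdot,\Z)^2)$, and the definition of $\LCD_\a$ converts the resulting lattice distance into $\min(\g\tau,\a)$ throughout the integration range, after which the split at $|\tau|=\a$ yields $C\e+Ce^{-c\a^2}$. The one real difference is that the paper only writes out the case $\xi=\pm1$ (deferring the general case to \cite{RV-square}), while you attack the general subgaussian case by symmetrization; that is the right move.

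However, the displayed inequality at the heart of your second paragraph,
$$
\sum_k\E\,\dist(\tau a_k\tilde\xi_k,\Z)^2\;\gtrsim\;\dist(\tau a,\Z^n)^2,
$$
is false as stated. For the symmetrized Bernoulli variable $\tilde\xi\in\{-2,0,2\}$ the left-hand side equals $\tfrac12\dist(2\tau a,\Z^n)^2$, which vanishes whenever $2\tau a\in\Z^n$ even though $\dist(\tau a,\Z^n)$ may be of order $1$ (take $a=(1/\sqrt2,1/\sqrt2,0,\dots,0)$ and $\tau=1/\sqrt2$). The correct passage never returns to $\dist(\tau a,\Z^n)$: since the $\tilde\xi_k$ are identically distributed and each summand involves only one of them, one has $\sum_k\E\dist(\tau a_k\tilde\xi_k,\Z)^2=\E_{\tilde\xi}\,\dist(\tau\tilde\xi a,\Z^n)^2$ for a single copy $\tilde\xi$, and one then applies the defining property of $\LCD_\a$ at the \emph{random} scale $\tau\tilde\xi$, restricted to the event $|\tilde\xi|\in[1,C]$ (which has probability $p>0$ by Paley--Zygmund together with the subgaussian tail). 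On that event $\dist(\tau\tilde\xi a,\Z^n)\ge\min(\g\tau|\tilde\xi|,\a)\ge\min(\g\tau,\a)$ provided $\tau|\tilde\xi|<\LCD_\a(a)$, i.e.\ provided $\tau<\LCD_\a(a)/C$, which costs only a constant in the hypothesis on $\e$. Your final estimate $|\phi_S(2\pi\tau)|\le\exp\bigl(-c'\min(\tau^2,\a^2)\bigr)$ and the concluding integration are then correct. So the architecture is sound and matches the paper's; the single mis-stated inequality should be replaced by the conditioning-at-random-scale argument that you in fact gesture at in your closing paragraph.
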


We shall prove more than is claimed in the Theorem. Instead of the
small ball probability we shall bound  a parameter, which controls
the concentration of a random variable around any fixed point.
\begin{definition}
  The {\em L\'evy concentration function} of a random variable $S$
  is defined for $\e > 0$ as
  $$
  \LL(S, \e) = \sup_{v \in \R} \P ( |S-v| \le \e ).
  $$
\end{definition}

The proof of the Theorem uses the Fourier-analytic approach
developed by Hal\'{a}sz \cite{H}, \cite{H 75}.

We start with the classical Lemma of Ess\'{e}en, which estimate the
L\'{e}vy concentration function in terms of the characteristic function
of a random variable.
\begin{lemma}  \label{l: Esseen}
  Let $Y$ be a real-valued random variable. Then
  \[
    \sup_{v \in \R} \P (|Y-v| \le 1) \le C \int_{-2}^{2}
    |\phi_Y(\theta)| \, d \theta,
  \]
  where $\phi_Y(\theta)=\E \exp ( i \theta Y)$ is the
  characteristic function of $Y$.
\end{lemma}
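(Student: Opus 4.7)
The plan is to reduce the concentration bound to a Fourier-analytic identity by majorizing the indicator $\mathbf{1}_{[-1,1]}$ with a nonnegative function whose Fourier transform is bounded and supported in $[-2,2]$. Working directly with the indicator would be inconvenient because its Fourier transform $2\sin\theta/\theta$ is not compactly supported, so Parseval-type manipulations would not produce an integral truncated to $[-2,2]$. A smooth majorant with a compactly supported transform removes this obstacle and yields the desired truncation automatically.

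Concretely, I would take the bump
\[
g(x) \;=\; 2\left(\frac{\sin x}{x}\right)^{2}.
\]
Two elementary facts justify this choice. First, up to a multiplicative constant, the Fourier transform of $(\sin x/x)^2$ is the triangle function $(2-|\theta|)_+$, which is continuous, bounded, and supported in $[-2,2]$; thus $\hat g$ is supported in $[-2,2]$ and $\|\hat g\|_\infty \le M$ for some explicit constant $M$. Second, since $(\sin x)/x$ is even and decreasing on $[0,\pi]$ with $\sin 1 > 1/\sqrt 2$, one has $g(x) \ge g(1) = 2\sin^2 1 > 1$ for all $|x|\le 1$, so $\mathbf{1}_{[-1,1]} \le g$ pointwise. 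Both $g$ and $\hat g$ lie in $L^1$, making the Fourier inversion formula applicable.

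Now fix $v \in \R$. By Fourier inversion,
\[
g(Y-v) \;=\; \frac{1}{2\pi}\int \hat g(\theta)\, e^{i\theta(Y-v)}\, d\theta.
\]
Taking expectations and applying Fubini (legitimate because $\hat g$ is bounded with compact support and $|\phi_Y|\le 1$) gives
\[
\P(|Y-v|\le 1) \;\le\; \E\, g(Y-v) \;=\; \frac{1}{2\pi}\int_{-2}^{2} \hat g(\theta)\, e^{-i\theta v}\, \phi_Y(\theta)\, d\theta.
\]
Taking absolute values inside the integral yields a bound uniform in $v$,
\[
\sup_{v\in \R}\P(|Y-v|\le 1) \;\le\; \frac{M}{2\pi}\int_{-2}^{2} |\phi_Y(\theta)|\, d\theta,
\]
which is exactly the claim with $C = M/(2\pi)$. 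The only non-mechanical step is producing the majorant $g$ with the required pointwise inequality and Fourier support; once $g$ is in hand, everything reduces to a single application of Fourier inversion and Fubini, so no serious obstacle is anticipated.
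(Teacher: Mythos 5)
Your proof is correct and follows essentially the same route as the paper: both majorize $\mathbf{1}_{[-1,1]}$ by a constant multiple of $(\sin x/x)^2$, whose Fourier transform is the triangle function $\chi_{[-1,1]}*\chi_{[-1,1]}$ supported in $[-2,2]$, and then apply Fourier inversion and the triangle inequality. Your explicit verification that $2\sin^2 1>1$ just makes concrete the constant $c$ that the paper leaves implicit.
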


\begin{proof}
  Let $\psi= \chi_{[-1,1]} * \chi_{[-1,1]}$ and let $f=\hat{\psi}$:
  \[
    f(t)= \left ( \frac{2 \sin t}{t} \right )^2.
  \]
  Then both $f \in L_1(\R)$ and $\psi \in L_1(\R)$, so $f$ satisfies the Fourier
  inversion formula. Note also, that $f(t)\ge c$ whenever $|t| \le
  1$. Therefore,
  \begin{align*}
    &\P(|X-v | \le 1)
    =\E \chi_{[-1,1]}(X-v)
     \le \frac{1}{c} \E f(X-v) \\
    &= \frac{1}{c} \E \left ( \frac{1}{2 \pi}
       \int_{\R} \psi(\theta) e^{i \theta (X-v)} \, d\theta \right )
    \le  \frac{1}{2 \pi c}
       \int_{\R} \psi(\theta) | \E e^{i \theta (X-v)}| \, d\theta \\
    &\le \frac{1}{ \pi c}
       \int_{-2}^2 | \E e^{i \theta X}| \, d\theta.
  \end{align*}
 The last inequality follows from $\text{supp}(\psi) = [-2,2]$ and
 $\psi(x) \le 2$.
\end{proof}

\begin{proof}[Proof of Theorem \ref{t: SBP}]
To make the proof more transparent, we shall assume that $\xi$ is the random
$\pm1$ variable. The general case is considered in \cite{RV-square}.

Let $S= \sum_{j=1}^n a_j \xi_j$. Applying Ess\'{e}en's Lemma to the
random variable $Y=S/\e$, we obtain
\begin{equation}  \label{i: Esseen}
  \LL(S,\e) \le C \int_{-2}^{2}
    |\phi_S(\theta/\e)| \, d \theta
  =C \int_{-2}^{2}
    \prod_{j=1}^n |\phi_j(\theta/\e)| \, d \theta,
\end{equation}
where
\[
  \phi_j(t)=\E \exp( ia_j \xi_j t)=\cos ( a_j t).
\]
 The last equality in \eqref{i: Esseen} follows from the
 independence of $\xi_j, \ j=1 \etc n$.
The inequality $|x| \le \exp(-\frac{1}{2}(1-x^2))$, which is valid
for all $x \in \R$, implies
\[
  |\phi_j(t)|
  \le \exp \left (- \frac{1}{2} \sin^2 ( a_j t) \right )
  \le \exp \left ( -\frac{1}{2} \min_{q \in \Z} |\frac{2}{\pi} a_j t -q|^2 \right ).
\]
 In the last inequality we estimated the absolute value of the sinus by a piecewise linear function, see the picture below.

  \begin{center}
             \includegraphics[height=3cm]{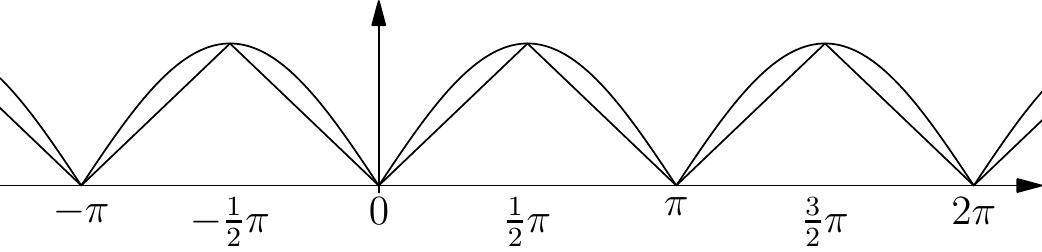}
  \end{center}

Combining the previous inequalities, we get
\begin{align}  \label{i: LL}
  \LL(S,\e)
  &\le C \int_{-2}^2 \exp \left (- \frac{1}{2}
      \sum_{j=1}^n  \min_{q \in \Z}\left |\frac{2}{\pi} a_j \cdot
           \frac{\theta}{\e} -q \right |^2
                         \right  ) \, d\theta \\
  &= C \int_{-2}^{2}
    \exp(-h^2(\theta)/2) \, d \theta, \notag
\end{align}
where
\[
  h(\theta)=\min_{p \in \Z^n} \norm{\frac{2}{\pi \e}\cdot \theta
a-p}_2.
\]

Since by the assumption, $4/(\pi \e) \le \LCD_{\a}(a)$, the definition of the least
common denominator implies that for any $\theta \in [-2,2]$,
\[
  h(\theta)
  \ge \min (\g \frac{2 }{\pi \e} \cdot \theta \norm{a}_2, \a).
\]
Recall that $\norm{a}_2=1$. Then  the previous inequality implies
\[
    \exp(-h^2(\theta)/2)\le \exp \left ( - \left (\frac{2 \g}{\pi \e}
    \theta \right )^2 \Big / 2 \right ) +\exp (-\a^2/2).
\]
Substituting this into \eqref{i: LL} we complete the proof.
\end{proof}

To apply the previous result for random matrices we shall combine it
with the following {\em Tensorization Lemma}.
\begin{lemma}[Tensorization]   \label{l: tensorization}
  Let $\z_1, \ldots, \z_m$ be independent real random
  variables, and let $K, \e_0 \ge 0$.
   Assume that for each $k$
  $$
  \P (|\z_k| < \e) \le K \e
  \qquad \text{for all $\e \ge \e_0$}.
  $$
  Then
  $$
  \P \Big( \sum_{k=1}^m \z_k^2 < \e^2 m \Big) \le (C K \e)^m
  \qquad \text{for all $\e \ge \e_0$},
  $$
  where $C$ is an absolute constant.

\end{lemma}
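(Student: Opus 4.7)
The plan is to use the exponential Markov inequality together with independence to reduce the problem to bounding $\E \exp(-\zeta_k^2/\varepsilon^2)$ by something of order $K\varepsilon$. Specifically, for any $\varepsilon \ge \varepsilon_0$,
$$
\P\Bigl(\sum_{k=1}^m \zeta_k^2 < \varepsilon^2 m\Bigr)
= \P\Bigl(\exp\bigl(-\tfrac{1}{\varepsilon^2}\sum_{k=1}^m \zeta_k^2\bigr) > e^{-m}\Bigr)
\le e^m \prod_{k=1}^m \E \exp\bigl(-\zeta_k^2/\varepsilon^2\bigr),
$$
where the last step uses Markov and independence of the $\zeta_k$. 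So everything reduces to showing $\E \exp(-\zeta_k^2/\varepsilon^2) \le C_1 K\varepsilon$ for each $k$, as then the product is at most $(C_1 K\varepsilon)^m$ and the factor $e^m$ is absorbed into a new absolute constant $C$.

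To estimate $\E \exp(-\zeta_k^2/\varepsilon^2)$, I would use the layer-cake identity, which after the substitution $t = e^{-u^2/\varepsilon^2}$ gives
$$
\E \exp\bigl(-\zeta_k^2/\varepsilon^2\bigr)
= \int_0^\infty \P(|\zeta_k| < u) \cdot \frac{2u}{\varepsilon^2}\, e^{-u^2/\varepsilon^2}\, du.
$$
Now I would split this integral at the threshold $u=\varepsilon_0$. On the tail $u \ge \varepsilon_0$, the hypothesis directly applies and yields $\P(|\zeta_k| < u) \le Ku$, so this piece is controlled by $\int_0^\infty 2Ku^2 \varepsilon^{-2} e^{-u^2/\varepsilon^2}\, du$, which equals a Gaussian constant times $K\varepsilon$ after rescaling $v = u/\varepsilon$. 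On the initial segment $u < \varepsilon_0$, monotonicity of the distribution function gives $\P(|\zeta_k|<u) \le \P(|\zeta_k|<\varepsilon_0) \le K\varepsilon_0 \le K\varepsilon$ (using the hypothesis at $\varepsilon = \varepsilon_0$ together with $\varepsilon_0 \le \varepsilon$), and then the remaining integral $\int_0^{\varepsilon_0} (2u/\varepsilon^2) e^{-u^2/\varepsilon^2}\, du$ is at most $1$. Combining the two pieces yields $\E \exp(-\zeta_k^2/\varepsilon^2) \le C_1 K\varepsilon$, as required.

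The only real subtlety is the need to carry the cutoff $\varepsilon_0$: one must not use the small-ball bound below that threshold, and the trick is simply to replace it there by its value at $\varepsilon_0$, which is permissible precisely because $\varepsilon \ge \varepsilon_0$ in the conclusion. Apart from this bookkeeping, the proof is a routine moment-generating-function computation, and the final bound $(CK\varepsilon)^m$ is exactly what feeds into applications such as extending Theorem~\ref{t: SBP} from a single coordinate $\pr{Z}{X_n}$ to the squared norm $\|Ax\|_2^2$ needed for the invertibility arguments.
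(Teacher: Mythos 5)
Your proof is correct and follows essentially the same route as the paper: the exponential Markov bound $e^m\prod_k \E\exp(-\z_k^2/\e^2)$, the layer-cake (Fubini) representation of $\E\exp(-\z_k^2/\e^2)$, and a split of the integral into a range where the small-ball hypothesis applies directly and an initial segment handled by monotonicity. The only (immaterial) difference is the location of the split --- you cut at $u=\e_0$ while the paper cuts at $u=\e$, using $\P(|\z_k|<\e u)\le\P(|\z_k|<\e)\le K\e$ on the initial segment --- and both choices yield the same bound $CK\e$.
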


\begin{proof}
Let $\e \ge \e_0$. We have
\begin{align}                              \label{i: tensorization product}
\P \Big( \sum_{k=1}^m \z_k^2 < \e^2 m \Big)
  &= \P \Big( m - \frac{1}{\e^2} \sum_{k=1}^m \z_k^2 > 0 \Big)
  \le \E \exp \Big( m - \frac{1}{\e^2} \sum_{k=1}^m \z_k^2 \Big) \notag \\
  &= e^m \prod_{k=1}^m \E \exp(-\z_k^2/\e^2).
\end{align}
By Fubini's theorem,
\[
    \E \exp(-\z_k^2/\e^2)
    = \E \int_{|\z|/\e}^{\infty} 2u e^{-u^2} \, du
    = \int_0^\infty 2 u e^{-u^2} \, \P(|\z_k|/\e < u) \; du.
\]
For $u \in (0,1)$, we have $\P(|\z_k| /\e <  u) \le \P(|\z_k| < \e) \le K
\e$. This and the assumption of the lemma yields
$$
\E \exp(-\z_k^2/\e^2) \le \int_0^1 2 u e^{-u^2} K \e \; du
  + \int_1^\infty 2 u e^{-u^2} K \e u \; du
\le  C K \e.
$$

Putting this into \eqref{i: tensorization product} yields
$$
\P \Big( \sum_{k=1}^m \z_k^2 < \e^2 m \Big) \le e^m ( C K \e)^m.
$$
This completes the proof.
\end{proof}
Combining Theorem \ref{t: SBP} and Lemma \ref{l: tensorization}
yields the multidimensional small ball probability estimate similar
to the one we had for absolutely continuous random variable.

\begin{lemma}[Invertibility on a single vector]             \label{l: single}
   Let
  $A'$ be an $m \times n$ random matrix, whose entries are independent
  copies of a centered subgaussian random variable $\xi$ of unit variance.
  Then for any $\a>0$, for every vector $x \in S^{n-1}$, and for every $t \ge 0$,
  satisfying
  \[
    t \ge \max \left(\frac{(4/\pi)}{\LCD_\a(x)} , \ e^{-c \a^2} \right),
  \]
  one has
  $$
  \P \big( \|A'x\|_2 < t n^{1/2} \big)
  \le ( C t )^{m}.
  $$
\end{lemma}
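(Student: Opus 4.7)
The plan is to decompose $\|A'x\|_2^2$ into a sum of squares of independent one-dimensional random variables, apply Theorem~\ref{t: SBP} to each, and then combine the univariate bounds through Lemma~\ref{l: tensorization}.

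Denote the rows of $A'$ by $R_1,\ldots,R_m$ and set $\z_i := \langle R_i, x\rangle = \sum_{k=1}^n a_{i,k}\, x_k$. Because the entries $a_{i,k}$ are i.i.d.\ centered subgaussian with unit variance and the rows of $A'$ are independent, the variables $\z_1,\ldots,\z_m$ are independent, and each is a weighted sum of i.i.d.\ subgaussians with coefficient vector $x \in S^{n-1}$. Theorem~\ref{t: SBP} therefore applies to every $\z_i$ and yields
$$\P(|\z_i| \le \e) \le C\e + Ce^{-c\a^2} \qquad \text{for every } \e \ge \frac{4/\pi}{\LCD_\a(x)}.$$
The two hypotheses on $t$ now play complementary roles: the LCD bound lets us use this estimate for any $\e \ge t$, while the assumption $t \ge e^{-c\a^2}$ dominates the exponential term by the linear one on this range. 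Combining them gives the clean uniform estimate $\P(|\z_i| \le \e) \le K\e$ for all $\e \ge t$, with $K$ an absolute constant.

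Having obtained this uniform linear small ball bound on each $\z_i$, I would invoke Lemma~\ref{l: tensorization} with parameter $\e_0 := t$, yielding
$$\P\Big(\sum_{i=1}^m \z_i^2 < \e^2 m\Big) \le (CK\e)^m \qquad \text{for every } \e \ge t.$$
Choosing $\e := t\sqrt{n/m} \ge t$ converts the left-hand event into $\{\|A'x\|_2 < t\sqrt{n}\}$ and produces the target probability bound $(Ct)^m$, where the factor $\sqrt{n/m}$ is absorbed into the absolute constant in the intended regime where $m$ is comparable to $n$ (which is the setting in which this lemma will be used, namely for the submatrix obtained by removing a few rows).

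No step is genuinely delicate. The main conceptual point is to recognize that the two thresholds appearing in the hypothesis on $t$ correspond precisely to the two terms $C\e$ and $Ce^{-c\a^2}$ produced by Theorem~\ref{t: SBP}, and that once both are enforced the problem reduces to a textbook application of the tensorization principle to the coordinates of $A'x$.
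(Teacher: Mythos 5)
Your proposal is correct and is exactly the argument the paper intends: it proves the lemma by writing $\|A'x\|_2^2=\sum_{k=1}^m \z_k^2$ with $\z_k=\sum_j a'_{k,j}x_j$, applying Theorem~\ref{t: SBP} to each coordinate and then the Tensorization Lemma~\ref{l: tensorization}. Your observation about the residual factor $\sqrt{n/m}$ is a fair point about the statement itself (as written it is only accurate when $m$ is comparable to $n$, which is the regime $m=n-1$ in which the lemma is actually used), and your way of absorbing it matches the paper's implicit treatment.
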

To prove Lemma \ref{l: single}, note that $\norm{A'x}_2^2$ can be represented as
$\norm{A'x}_2^2 = \sum_{k=1}^{m} \z_k^2$, where $\z_k=\sum_{j=1}^n a'_{k,j} x_j$
 are i.i.d. random variables satisfying the conditions of Theorem \ref{t: SBP}.

\section{Putting all ingredients together}  \label{sec: random normal}

Now we have developed all necessary tools to prove the invertibility
theorem, which has been formulated in Section  \ref{sec: continuous}.

\begin{theorem*}{\rm  \ref{t: subgaussian}.}
  Let $A$ be an $n \times n$ matrix whose
  entries are independent copies of a centered subgaussian real random
  variable of unit variance.
  Then for every $\e \ge 0$ one has
  \begin{equation*}
    \P \big( s_n(A) \le \e n^{-1/2} \big)
    \le C\e +  c^n,
  \end{equation*}
  where $C > 0$  and $c \in (0,1)$.
\end{theorem*}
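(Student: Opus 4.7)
The plan is to wire together the ingredients of Sections \ref{sec: continuous}--\ref{sec: small ball}. Start from the dichotomy \eqref{i: two terms} with $\d,\rho$ fixed as in Lemma \ref{l: compressible}; the compressible contribution is bounded by $e^{-c_2n}$, which is absorbable into $c^n$. It remains to control $\P(\inf_{x\in\Incomp}\|Ax\|_2 \le \e n^{-1/2})$.

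By Lemma \ref{l: via distance} this reduces, up to the factor $\nu_1^{-1}$ and the rescaling $\e\mapsto\e\nu_2^{-1}$, to bounding $\P(\dist(X_n,H_n) < \e)$. Choose a unit vector $Z = Z(X_1,\ldots,X_{n-1})$ orthogonal to $H_n$; then $Z$ is independent of $X_n$ and $\dist(X_n,H_n) \ge |\langle Z,X_n\rangle|$. Conditioning on $Z$ and applying Theorem \ref{t: SBP} with $\a := c_0\sqrt n$ to $\langle Z,X_n\rangle = \sum_k Z_k\xi_k$ yields, whenever $\LCD_\a(Z) \ge (4/\pi)/\e$,
\[
\P\bigl(|\langle Z,X_n\rangle| \le \e \,\bigm|\, Z\bigr) \le C\e + Ce^{-cc_0^2 n}.
\]
Setting $D := e^{c_3 n}$ for a small $c_3 > 0$, the theorem follows once the key tail estimate
\[
(\ast) \qquad \P\bigl(\LCD_\a(Z) \le D\bigr) \le c^n
\]
is established: for $\e \ge 1/D$ this gives $\P(|\langle Z,X_n\rangle|\le\e) \le C\e + c^n$, while for $\e < 1/D$ the $C\e$ term is dominated by $c^n$.

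The hard part is $(\ast)$, where the arithmetic structure from Section \ref{sec: small ball} enters. Let $B$ be the $(n-1)\times n$ matrix with rows $X_1^T,\ldots,X_{n-1}^T$; $B$ has i.i.d.\ centered subgaussian entries and $BZ = 0$. Applying Lemma \ref{l: compressible} (or Proposition \ref{p: rectangular}) to $B$ forces $Z \in \Incomp$ outside an exponentially small event, since $BZ=0$ is incompatible with $\inf_{x\in\Comp}\|Bx\|_2 \ge c_1\sqrt n$; on that event Lemma \ref{l: incomp LCD} gives $\LCD_\a(Z) \ge \l\sqrt n$. To exclude the range $[\l\sqrt n, D]$, I cover the sublevel set $\{x\in S^{n-1} : \l\sqrt n \le \LCD_\a(x) \le D\}$ by a net $\NN_D$ via a standard dyadic construction: at each dyadic level $\theta \in [\l\sqrt n, D]$ the admissible approximants $p/\theta$ with $p\in\Z^n$ and $\|p\|_2 \lesssim \theta$ number at most $(C\theta/\sqrt n)^n$ and sit at scale $\a/\theta$; summing over the $O(\log D)$ levels gives $|\NN_D| \le (CD/\sqrt n)^n$. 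For each fixed $x\in\NN_D$, Lemma \ref{l: single} applied to $B$ bounds $\P(\|Bx\|_2 \le t\sqrt n) \le (Ct)^{n-1}$ whenever $t \ge e^{-cc_0^2 n}$. Proposition \ref{p: norm} controls $\|B\|$, so $BZ = 0$ forces $\|Bx\|_2 \lesssim \|B\|\cdot(\text{net scale})$ for the nearest $x\in\NN_D$; a union bound of the form $|\NN_D|\cdot(Ct)^{n-1} \le (CD/\sqrt n)^n\cdot(Ct)^{n-1}$ is at most $c^n$ once $c_3$ is chosen small enough, proving $(\ast)$.

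The principal obstacle is precisely the three-way balance in $(\ast)$: the net cardinality $(CD/\sqrt n)^n$ must be defeated by the per-point small-ball factor $(Ct)^{n-1}$ from Lemma \ref{l: single}, subject to the constraints $t \ge e^{-c\a^2}$ and $t\sqrt n \gtrsim \|B\|\cdot(\text{net scale})$. Once $\a$, $D$, and the scale of $\NN_D$ are tuned compatibly, integrating the conditional bound from Theorem \ref{t: SBP} against the distribution of $Z$ and re-inserting the compressible contribution yields $\P(s_n(A) \le \e n^{-1/2}) \le C\e + c^n$, as claimed.
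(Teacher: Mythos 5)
Your proposal is correct and follows essentially the same route as the paper: the compressible/incompressible dichotomy, reduction via Lemma \ref{l: via distance} to the distance $\dist(X_n,H_n)\ge|\langle Z,X_n\rangle|$, the small ball bound of Theorem \ref{t: SBP} conditioned on the random normal, and the tail estimate $(\ast)$ on $\LCD_\a(Z)$ proved by combining the compressible-vector bound, the level-set nets of cardinality $(CD/\sqrt n)^n$ built from integer points, Lemma \ref{l: single}, and a union bound over dyadic levels up to $e^{cn}$. This is precisely the argument of Sections \ref{sec: continuous}--\ref{sec: random normal}, including the choice $\a=\b\sqrt n$ and the final balance of the net cardinality against the per-point factor $(Ct)^{n-1}$.
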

Recall that we have divided the unit sphere into compressible and
incompressible vectors (see Definition \ref{d: compressible} and
inequality \eqref{i: two terms}), and proved that the first term in
\eqref{i: two terms} is exponentially small. Applying Lemma \ref{l:
via distance} and \eqref{i: dist}, we reduced the estimate for the
second term to the bound for
\[
  p(\e):=\P \big ( |\pr{Z}{X_n}| \le \e \big),
\]
where $X_n$ is the $n$-th column of the matrix $A$, and $Z$ is a unit
vector orthogonal to the first $n-1$ columns. To complete the proof, we
have to show that
\begin{equation}   \label{i: scalar product}
  p(\e) \le C \e,
\end{equation}
whenever $\e \ge e^{-cn}$. Here $\pr{Z}{X_n}= \sum_{j=1}^n Z_j
\xi_j$, where $Z=(Z_1 \etc Z_n)$. Throughout the rest of the
proof set
\begin{equation} \label{i: alpha}
  \a=\b \sqrt{n},
\end{equation}
where $\b>0$ is a small absolute constant, which will be chosen at
the end of the proof.
 If $\LCD_\a(Z) \ge e^{cn}$, then \eqref{i: scalar
product} follows from Theorem \ref{t: SBP}. Therefore, our problem
has been further reduced to proving
\begin{theorem}[Random normal]                      \label{t: random normal}
  Let $X_1,\ldots,X_{n-1}$ be random vectors whose coordinates are
  independent copies of a centered subgaussian random variable
  $\xi$.
  Consider a unit vector $Z$ orthogonal to all these vectors.
  There exist constants $c, c' > 0$ such that
  $$
  \P \big( \LCD_\a(Z) < e^{cn}
     \big)
  \le e^{-c'n}.
  $$
\end{theorem}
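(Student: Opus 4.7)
The plan is to combine a level-set decomposition of the sphere by $\LCD_\alpha$ with a net-based union bound whose single-vector input is Lemma \ref{l: single}. Let $A'$ denote the $(n-1) \times n$ matrix with rows $X_1^\top \etc X_{n-1}^\top$; by construction $A' Z = 0$. By Proposition \ref{p: norm} and the rectangular analogue of Lemma \ref{l: compressible} (which follows from Proposition \ref{p: rectangular} in the same way), the event
\[
\mathcal{E} := \{\|A'\| \le K \sqrt{n}\} \cap \big\{\inf_{x \in \Comp}\|A' x\|_2 \ge c_1 \sqrt{n}\big\}
\]
has probability at least $1 - e^{-cn}$. On $\mathcal{E}$ the vector $Z$ must be incompressible (otherwise $\|A' Z\|_2 = 0$ would violate the second condition), and so Lemma \ref{l: incomp LCD} forces $\LCD_\alpha(Z) \ge \lambda \sqrt{n}$.

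It therefore suffices to show, for each dyadic $D \in [\lambda \sqrt{n}, e^{cn}]$ (there are $O(n)$ such levels), that
\[
\P\big(\ker(A') \cap T_D \ne \emptyset,\, \mathcal{E}\big) \le e^{-c''n}, \quad T_D := \{v \in S^{n-1} \cap \Incomp : D \le \LCD_\alpha(v) < 2D\},
\]
and then union-bound over $D$. Fix $D$ and let $\NN_D$ be a maximal $(4\alpha/D)$-separated subset of $T_D$; by Lemma \ref{l: net-separated}(3) it is a $(4\alpha/D)$-net of $T_D$ and, crucially, is contained in $T_D$ itself. For any $v \in T_D$, the definition of $\LCD_\alpha$ supplies $\theta \in [D, 2D)$ and $p \in \Z^n \setminus \{0\}$ with $\|\theta v - p\|_2 < \alpha$, whence $\|p\|_2 \in [D/2, 3D]$ and $\|v - p/\|p\|_2\|_2 \le 2\alpha/D$. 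Thus $T_D$ is covered by $(2\alpha/D)$-balls around the integer points of $3D \cdot B_2^n$, whose number is at most $(CD/\sqrt{n})^n$ by a volume bound, giving $|\NN_D| \le (CD/\sqrt{n})^n$.

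On $\mathcal{E}$, if $v \in T_D \cap \ker(A')$ and $\tilde v \in \NN_D$ satisfies $\|\tilde v - v\|_2 \le 4\alpha/D$, then $\|A' \tilde v\|_2 \le K \sqrt{n} \cdot 4\alpha/D = 4K \beta n/D$. Since $\tilde v \in T_D$ has $\LCD_\alpha(\tilde v) \ge D$, Lemma \ref{l: single} applies with $t := 4K\beta \sqrt{n}/D$: the condition $t \ge 4/(\pi D)$ is automatic for fixed $\beta$ and large $n$, and $t \ge e^{-c\alpha^2} = e^{-c\beta^2 n}$ holds as long as $D \le e^{c' n}$ with $c' \ll c \beta^2$. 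The lemma gives $\P(\|A' \tilde v\|_2 \le 4K\beta n/D) \le (C\beta \sqrt{n}/D)^{n-1}$, and a union bound over $\NN_D$ yields
\[
\P\big(\ker(A') \cap T_D \ne \emptyset,\, \mathcal{E}\big) \le (CD/\sqrt{n})^n (C\beta\sqrt{n}/D)^{n-1} \le C^n \beta^{n-1} \cdot D/\sqrt{n}.
\]

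The main obstacle is this final counting balance: the net size $(CD/\sqrt{n})^n$ must be defeated by the single-vector bound $(C\beta\sqrt{n}/D)^{n-1}$, leaving a surplus $\beta^{n-1} \cdot D/\sqrt{n}$ that has to absorb $D$ up to $e^{cn}$. Choosing $\beta$ a small absolute constant (depending on $K$ and the constants of Lemma \ref{l: single}) and then $c$ small enough that $C\beta e^c < e^{-c''}$ makes each level contribute at most $e^{-c''n}$; the $O(n)$-fold loss from the union bound over dyadic scales is absorbed by halving $c''$. The delicate point is the constraint $t \ge e^{-c\beta^2 n}$, which is precisely what fixes the exponential ceiling $e^{cn}$ on $D$ in the statement of the theorem.
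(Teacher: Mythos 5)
Your proposal is correct and follows essentially the same route as the paper: the compressible/incompressible split to get $\LCD_\alpha(Z)\ge\lambda\sqrt n$, the dyadic decomposition into level sets $S_D$, the $(4\alpha/D)$-net of cardinality $(CD/\sqrt n)^n$ built from integer points in $3D\cdot B_2^n$, the single-vector bound of Lemma \ref{l: single}, and the final balance $(CD/\sqrt n)^n(C\beta\sqrt n/D)^{n-1}$ closed by choosing $\beta$ and $c$ small. The counting balance you flag as the main obstacle is exactly the crux of the paper's argument, and your bookkeeping of the constraints on $t$ and $D$ matches the paper's.
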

The components of a random vector should be
arithmetically incommensurate to the extent that their essential LCD
is exponential in $n$. Intuitively, this is rather obvious for a random vector
uniformly distributed over the sphere.
It can be rigorously checked by estimating the total area of the points on the sphere, which have smaller values of the LCD.
 However,
 the distribution  of the random normal $Z$ is more involved,
  and it requires some work to confirm
this intuition.
\begin{proof}
 Let $A'$ be the $(n-1)\times n$ matrix with rows
 $X_1^T,\ldots,X_{n-1}^T$. Then $Z \in \text{Ker}(A')$.
 The matrix $A'$ has i.i.d. entries. We start with using the decomposition similar to \eqref{i: two terms}:
\begin{align*}
  &\P \big(\exists Z \in S^{n-2} \ \  \LCD_\a(Z) < e^{cn} \text{ and } A'Z=0
     \big) \\
  &\le \P \big(\exists Z \in \Comp \ \  AZ=0
        \big) \\
   & \quad + \P \big( \exists Z \in \Incomp \ \  \LCD_\a(Z) < e^{cn} \text{ and } A'Z=0
        \big).
\end{align*}
 Lemma \ref{l: compressible} implies that the first term in the right hand side does not exceed $e^{-cn}$. Formally, we have to reprove this lemma for $(n-1) \times n$ matrices, instead of the $n \times n$ ones, but the proof extends to this case without any changes.

 To bound the second term, we introduce a new decomposition of the
 sphere based on the LCD.  Recall that by Lemma \ref{l: incomp LCD},  any incompressible
 vector $a$ satisfies $\LCD_{\a}(a) \ge \l \sqrt{n}$.
 For $D >0$, set
 \[
   S_D=\{ x \in S^{n-1} \mid  D \le \LCD_\a(x) \le 2D \}.
 \]
 It is enough to prove that
 \[
   \P(\exists x \in S_D \ A'x=0) \le e^{-n}.
 \]
  whenever
 $\l \sqrt{n} \le D \le e^{cn}$. Indeed,
 the statement of the Theorem will then follow by taking the union
 bound over $D=2^k$ for $k \le cn$.

 To this end, we shall use the $\e$-net argument to bound
 $\norm{A'x}_2$ below. For a fixed $x \in S_D$, the required estimate follows
 from substituting the bound $\LCD_\a(x) \ge D$ in
 Lemma \ref{l: single}:
 \begin{equation} \label{i: single x}
  \P \big( \|A'x\|_2 < t n^{1/2} \big)
  \le ( C t )^{n-1},
 \end{equation}
 provided
  $
    t \ge \frac{(4/\pi)}{D}.
  $
  To estimate the size of the $\e$-net we use the bound for the
  essential least common denominator again.
  The simple volumetric bound is not sufficient for our purposes,
and this is the crucial step where we explore the additive structure
of $S_D$ to construct a smaller net.

\begin{lemma}[Nets of level sets]                   \label{l: net}
  There exists a $(4\a/D)$-net in $S_D$ of cardinality at most $(CD / \sqrt{n})^n$.
\end{lemma}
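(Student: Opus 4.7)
The plan is to use the definition of $\LCD_\alpha$ to replace each $x \in S_D$ by the unit vector $p/\|p\|_2$ associated to a nearby nonzero lattice point $p \in \Z^n$, and then count such lattice points by a volumetric bound in a ball of radius $O(D)$. Conceptually this is where LCD control pays off: a generic volumetric $\e$-net on $S^{n-1}$ with $\e \sim \alpha/D \sim 1/\sqrt{n}$ would give the useless bound $(\sqrt{n}/\beta)^n$, but by matching the approximation scale to the LCD level the net becomes the image of a lattice inside a ball of radius $\sim D$, whose count $(CD/\sqrt{n})^n$ is exponentially smaller whenever $D \le e^{cn}$.

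Fix $x \in S_D$. Since $\LCD_\alpha(x) \le 2D$, for arbitrarily small $\eta > 0$ there exist $\theta \in [D,\, 2D+\eta]$ and $p \in \Z^n$ with $\|\theta x - p\|_2 < \min(\gamma \theta,\alpha)$. The strict inequality $\|\theta x - p\|_2 < \gamma \theta < \theta = \|\theta x\|_2$ rules out $p=0$, so $y := p/\|p\|_2 \in S^{n-1}$ is well defined. Two triangle inequalities, together with $\bigl|\|p\|_2 - \theta\bigr| \le \|p - \theta x\|_2 < \alpha$, give
\[
\|x - y\|_2 \;\le\; \frac{\|\theta x - p\|_2}{\theta} + \frac{\bigl|\|p\|_2 - \theta\bigr|}{\theta} \;<\; \frac{2\alpha}{D}.
\]
Hence the collection $\NN_0$ of all such $y$, taken over $x \in S_D$, is a $(2\alpha/D)$-net for $S_D$ in $S^{n-1}$; Lemma \ref{l: net-separated}(1) then upgrades this to a $(4\alpha/D)$-net contained in $S_D$ of no larger cardinality.

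It remains to bound $|\NN_0|$. Each $y$ is determined by a $p \in \Z^n$ with $\|p\|_2 \le \theta + \alpha \le 2D + \alpha + \eta$. Choosing the constant $\beta$ in $\alpha = \beta\sqrt{n}$ small enough that $\alpha \le D$ (possible because $D \ge \lambda\sqrt{n}$ by Lemma \ref{l: incomp LCD}) and sending $\eta \to 0$ gives $\|p\|_2 \le 3D$. The classical lattice-point count (unit cubes centered at integer points are disjoint and fit into $(3D + \sqrt{n}/2)B_2^n$), combined with the volume bound $\vol(R B_2^n) \le (C R/\sqrt{n})^n$, yields
\[
|\NN_0| \;\le\; |\Z^n \cap 3D\cdot B_2^n| \;\le\; \vol\bigl((3D + \tfrac{\sqrt{n}}{2}) B_2^n\bigr) \;\le\; (CD/\sqrt{n})^n,
\]
where the additive $\sqrt{n}/2$ is absorbed using $D \ge \lambda\sqrt{n}$. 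There is no real analytic obstacle here; the whole content is the recognition that the LCD provides exactly the right bridge between the metric scale $\alpha/D$ on the sphere and the combinatorial scale $D$ in the integer lattice, so that two triangle inequalities plus one volumetric count suffice.
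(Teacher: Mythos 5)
Your proof is correct and follows essentially the same route as the paper's: replace $x\in S_D$ by the normalized lattice point $p/\|p\|_2$ supplied by the definition of $\LCD_\a$, observe $\|p\|_2\le 3D$, count lattice points in $B(0,3D)$ volumetrically, and then push the $(2\a/D)$-net into $S_D$ at the cost of doubling the mesh. Your extra care with the infimum not being attained and with ruling out $p=0$ is a minor tightening of the same argument, so nothing further is needed.
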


It is important, that the mesh of this net  depends on the small parameter $\a$, while its  cardinality is independent of it.
This feature would allow us later to use the union bound for an appropriately chosen $\a$.

The proof of Lemma \ref{l: net} is based on counting the number of integer points in a ball of a large radius.
We will show that if $x \in S_D$, then the ray $\{\l x \mid \l>0 \}$ passes within distance $\a$ from an integer point in a ball of radius $3 D$.
The number of such points is independent of $\a$, and can be bounded from the volume considerations.

\vskip 0.1in
\begin{center}
 \begin{tikzpicture}[semithick, >=stealth, scale=0.55]

   \filldraw[fill=white!90!black] (4,3) circle (6pt);

  \foreach \x in {-7,...,7}
  \foreach \y in {-7,...,7}
    \filldraw (\x,\y) circle (1pt);

  \draw (0,0) circle (6.2);

  \draw[->] (-7.5,0) -- (7.5,0);
   \draw[->] (0,-7.5) -- (0,7.5);

  \draw[] (0,0) --
    node[above, very near end, fill=white]{$\{ \lambda x \mid \lambda>0 \}$} (38:8.8);

  \draw[->] (0,0) --
    node[ fill=white]{$3 D$} (130:6.2);

 \end{tikzpicture}

\end{center}

\begin{proof}
We can assume that $4\a/D \le 1$, otherwise the conclusion is
trivial. To shorten the notation, denote for $x \in S_D$
$$
D(x) := \LCD_{\a}(x).
$$
By the definition of $S_D$, we have $D \le D(x) < 2D$. By the
definition of the essential least common denominator, there exists $p \in
\Z^n$ such that
\begin{equation}                    \label{Dx x}
  \|D(x)x - p\|_2 < \a.
\end{equation}
Therefore
$$
\Big\| x - \frac{p}{D(x)} \Big\|_2 < \frac{\a}{D(x)} \le
\frac{\a}{D} \le \frac{1}{4}.
$$
Since $\|x\|_2 = 1$, it follows that
\begin{equation}                    \label{x-pp}
  \Big\| x - \frac{p}{\|p\|_2} \Big\|_2
  < \frac{2\a}{D}.
\end{equation}

On the other hand, by \eqref{Dx x} and using  $\|x\|_2 = 1$,
$D(x) \le 2D$ and $4\a/D \le 1$, we obtain
\begin{equation}                    \label{net bounded}
  \|p\|_2 < D(x) + \a
  \le 2D + \a
  \le 3D.
\end{equation}

Inequalities \eqref{x-pp} and \eqref{net bounded} show that the set
$$
\NN := \Big\{ \frac{p}{\|p\|_2} :\; p \in \Z^n \cap B(0,3D) \Big\}
$$
is a $(2\a/D)$-net of $S_D$. Recall that, by a known volumetric
argument, the number of integer points in $B(0,3D)$ is at most
$(1+9D/\sqrt{n})^n \le (CD/\sqrt{n})^n$ (where in the last
inequality we used that by the definition of the level set, $D >
c_0\sqrt{n}$ for all incompressible vectors). Finally, we can find a
$(4\a/D)$-net of the same cardinality, which lies in $S_D$.
\end{proof}
Now we can complete the $\e$-argument. Recall that by Proposition
\ref{p: norm},
\[
  \P(s_1(A') \ge C_0 \sqrt{n} ) \le e^{-cn}.
\]
Therefore, in order to complete the proof, it is enough to show that the event
$$
\EE :=  \Big\{ \exists x \in S_D \ \  A'x =0
  \text{ and } \|A'\| \le C_0\sqrt{n} \Big\}
$$
has probability at most $e^{-n}$.

 Assume that $\EE$ occurs, and let $x \in S_D$ be such that
 $A'x =0$.
Let $\NN$ be the $(4\a/D)$-net constructed in Lemma \ref{l: net}.
Choose $y \in \NN$ such that $\norm{x-y} < 4\a/D$. Then by the
triangle inequality,
\[
  \norm{A'y}_2 \le \norm{A'}\cdot \norm{x-y}_2
  <  C_0 \sqrt{n} \cdot \frac{4 \a}{D}
  = 4 C_0 \b \frac{n}{D},
\]
if we  recall that
$\a=\b \sqrt{n}$. Set $t=4 C_0 \b \sqrt{n}/D$. Combining the estimate
\eqref{i: single x} for this $t$ with the union bound, we obtain
\begin{align*}
  \P(\EE)
  &\le \P ( \exists y \in \NN \ \norm{A'y}_2 \le t \sqrt{n})
  \le |\NN| \cdot (Ct)^{n-1}
  \le \left ( \frac{CD}{\sqrt{n}}  \right )^n \cdot (Ct)^{n-1}  \\
  &\le \left ( \frac{CD}{\sqrt{n}}  \right ) \cdot \left(4C C_0 \b \right)^{n-1}.
\end{align*}
 Since $D \le e^{c n}$, we can choose the constant $\b$ so that the right hand side of the previous inequality
 will be less than $e^{-n}$.
  The proof
of Theorem \ref{t: subgaussian} is complete.

\end{proof}

\section{Short Khinchin inequality} \label{sec: short Khinchin}

Let $1 \le p < \infty$. Recall that $\norm{\cdot}_p$ denotes the
standard $\ell_p$ norm in $\R^n$, and $B_p^n$ its unit ball.

Let $X \in \R^n$ be a  vector with independent centered random $\pm
1$ coordinates, i.e. a random vertex of the discrete cube
$\{-1,1\}^n$. The classical Khinchin inequality, Theorem \ref{t:
Khinchin}, asserts that for any $p \ge 1$ and for any vector $a \in
\R^n$, $\big ( \E |\pr{a}{X}|^p \big )^{1/p} $ is equivalent to   $\norm{a}_2$ up to multiplicative constants depending on $p$.
This equivalence can be obtained if one averages not over the whole
discrete cube, but over some small part of it. The problem how small
should this set be was around since mid-seventies.  More precisely,
\begin{quote}
  {\em Let $p \ge 1$. Find constants $\a_p,\b_p$ and a set $V \subset
  \{-1,1\}^n$ of a small cardinality such that
  \[
    \a_p \norm{a}_2
    \le  \left ( \frac{1}{|V|}
        \sum_{x \in V} |\pr{a}{x}|^p \right )^{1/p}
    \le \b_p \norm{a}_2
  \]
  for any $a \in \R^n$.
  }
\end{quote}
Deterministic constructions of sets $V$ of reasonably small
cardinality are unknown. Therefore, we shall construct the set $V$
probabilistically. Namely, we choose $N=N(n,p)$ and consider $N$
independent copies $X_1 \etc X_N$ of the random vector $X$. If $N
\ll 2^{n/2}$, in particular, if $N$ is polynomial in $n$, all vectors
$X_1 \etc X_N$ are distinct with high probability. The problem thus
is reduced to showing that with high probability, any vector $y \in
\R^n$ satisfies
  \begin{equation}  \label{short Khinchin}
    \a_p \norm{y}_2
    \le \left ( \frac{1}{N}
          \sum_{j=1}^N |\pr{y}{X_j}|^p \right )^{1/p}
    \le \b_p \norm{y}_2.
  \end{equation}
This problem can be recast in the language of random matrices. Let
$A$ be the $N \times n$ matrix with rows $X_1 \etc X_N$. Then the
inequality above means that $A$ defines a nice isomorphic embedding
of $\ell_2^n$ into $\ell_p^N$.

As in the proof of the original Khinchin inequality, we consider
cases $p=1$ and $p>2$ separately.

\subsection{Short Khinchin inequality for $p=1$}
In this case we derive the inequality \eqref{short Khinchin} in a
more general setup. Assume that the coordinates of the vector $X$
are i.i.d. centered subgaussian variables.
 The middle term in \eqref{short Khinchin} can be rewritten as $N^{-1/p} \norm{A y}_p$, where $A$ is the matrix with columns $X_1 \etc X_N$.
 In this language, establishing \eqref{short Khinchin} is equivalent to estimating the  the maximum and ther minimum of $\norm{Ay}_p$ over the unit sphere.

 Proposition \ref{p:
norm} combined with the inequality
 $ \norm{A: \ell_2^n \to \ell_1^N}
   \le  \sqrt{N} \cdot \norm{A: \ell_2^n \to \ell_2^N}
 $
 yields the following
\begin{proposition}                       \label{p: l_1 norm}
  Let $A$ be an $N \times n$ random matrix, $N \ge n$, whose entries
  are independent copies of a subgaussian random variable. Then
  $$
  \P \big( \norm{A: \ell_2^n \to \ell_1^N} > t N \big)
  \le e^{-c_0 t^2 N}
  \qquad \text{for } t \ge C_0.
  $$
\end{proposition}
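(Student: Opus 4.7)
The plan is to deduce the $\ell_2^n \to \ell_1^N$ bound directly from the $\ell_2^n \to \ell_2^N$ bound provided by Proposition \ref{p: norm}, using the elementary comparison between $\ell_1$ and $\ell_2$ norms on $\R^N$. There is essentially no probabilistic content beyond what is already in Proposition \ref{p: norm}; the work is just a deterministic operator norm inequality.

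First I would record the deterministic inequality $\norm{A: \ell_2^n \to \ell_1^N} \le \sqrt{N} \cdot \norm{A: \ell_2^n \to \ell_2^N}$, which is the one suggested in the text. For any $x \in S^{n-1}$, Cauchy--Schwarz on $\R^N$ gives
\[
   \norm{Ax}_1 = \sum_{j=1}^N |(Ax)_j| \le \sqrt{N} \cdot \Bigl(\sum_{j=1}^N (Ax)_j^2\Bigr)^{1/2} = \sqrt{N} \, \norm{Ax}_2,
\]
and taking the supremum over $x \in S^{n-1}$ yields the claimed comparison.

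Next I would apply Proposition \ref{p: norm}. It asserts that for $s \ge C_0$,
\[
   \P\big(\norm{A: \ell_2^n \to \ell_2^N} > s\sqrt{N}\big) \le e^{-c_0 s^2 N}.
\]
Combining this with the comparison above, the event $\{\norm{A: \ell_2^n \to \ell_1^N} > tN\}$ is contained in the event $\{\norm{A: \ell_2^n \to \ell_2^N} > t\sqrt{N}\}$, since $\norm{Ax}_1 \le \sqrt{N}\norm{Ax}_2$ implies $tN < \norm{Ax}_1 \le \sqrt{N}\norm{Ax}_2$, i.e.\ $\norm{Ax}_2 > t\sqrt{N}$. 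Choosing $s = t$ and applying Proposition \ref{p: norm} with the same constant $C_0$ then gives $\P(\norm{A: \ell_2^n \to \ell_1^N} > tN) \le e^{-c_0 t^2 N}$ for $t \ge C_0$.

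There is really no main obstacle here; the assertion is a one-line consequence of the previously established operator norm bound. The only point of care is that the probabilistic statement is identical in form to Proposition \ref{p: norm}, with the scaling $\sqrt{N}$ promoted to $N$ by absorbing the Cauchy--Schwarz factor $\sqrt{N}$, so the same constants $c_0$ and $C_0$ can be used without modification.
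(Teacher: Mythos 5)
Your argument is correct and is exactly the one the paper intends: the text preceding the proposition explicitly derives it from Proposition \ref{p: norm} together with the Cauchy--Schwarz comparison $\norm{A: \ell_2^n \to \ell_1^N} \le \sqrt{N}\,\norm{A: \ell_2^n \to \ell_2^N}$. Nothing to add.
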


This  implies the second inequality in \eqref{short Khinchin} with $p=1$ and
$\b_1=C_0$, so \eqref{short Khinchin} is reduced to the first
inequality. To establish it we apply the random matrix machinery
developed in the previous sections. Without loss of generality, we
may assume that $n \le N \le 2n$, because we are looking for small
values of $N$. Then the following Theorem will imply that the short
Khinchin inequality holds for any $N \ge n$ with  $\a_1$ depending
only on the ratio of $N/n$.
\begin{theorem}   \label{t: short Khinchin}
  Let $n,N$ be natural numbers such that $n\le N \le 2n$.
   Let $A$ be an $N \times n$ matrix, whose
  entries are i.i.d. centered subgaussian random variable
  of variance $1$. Set
  \[
    m=N-n+1.
  \]
  Then for any $\e>0$
  \[
    \P \left ( \exists x \in S^{n-1} \ \norm{Ax}_1
                  <  \e m \right )
    \le \left (\frac{CN}{m} \cdot \e \right )^{m}
       + c^n,
  \]
  where $C>0$ and $c \in (0,1)$.
\end{theorem}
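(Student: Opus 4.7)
\medskip

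I plan to follow the template of Sections~\ref{sec: continuous}--\ref{sec: random normal}, with $\ell_2$ replaced by $\ell_1$ throughout. First, Proposition~\ref{p: l_1 norm} secures $\|A\|_{\ell_2 \to \ell_1} \le C_0 N$ outside an event of probability $e^{-cN}$; I condition on this event. Next, I split $S^{n-1} = \Comp \cup \Incomp$ per Definition~\ref{d: compressible}. For compressible $x$, the inequality $\|Ax\|_1 \ge \|Ax\|_2$ combined with Proposition~\ref{p: rectangular} applied to restrictions of $A$ onto coordinate subspaces of dimension $\le \delta n$ gives $\inf_{x \in \Comp}\|Ax\|_1 \ge c \sqrt N$ outside probability $e^{-cn}$. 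Since the target bound is trivial when $\varepsilon m \ge c\sqrt N$, the compressible contribution is absorbed into the $c^n$ term.

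For the incompressible part, I decompose $\Incomp$ into dyadic level sets $S_D = \{x \in \Incomp : D \le \LCD_\alpha(x) < 2D\}$ with $\alpha = \beta\sqrt n$ for a small constant $\beta > 0$, where $D$ ranges over $[\lambda\sqrt n, e^{cn}]$; the range $D \ge e^{cn}$ is handled by an ``absolutely continuous'' argument in the spirit of Section~\ref{sec: continuous}. On each $S_D$ the plan has two ingredients. The first is a single-vector small ball bound for $\|Ax\|_1$. Theorem~\ref{t: SBP} gives $\P(|\langle A_i, x\rangle| < t) \le Ct + Ce^{-c\alpha^2}$ for $t \ge 4/(\pi D)$, and the event $\|Ax\|_1 < \varepsilon m$ forces, by Markov's inequality applied to the order statistics $|\zeta_{(1)}| \ge \cdots \ge |\zeta_{(N)}|$ of $|\langle A_i, x\rangle|$, that for every $k \ge 1$ at least $N-k+1$ of these values lie below $\varepsilon m/k$; a binomial union bound then yields
\[
  \P\bigl(\|Ax\|_1 < \varepsilon m\bigr)
  \le \min_{1 \le k \le N} \binom{N}{k-1}\bigl(C\varepsilon m/k + Ce^{-c\alpha^2}\bigr)^{N-k+1},
\]
and the choice $k \asymp m$, together with $\binom{N}{m-1} \le (eN/(m-1))^{m-1}$, should recover $(CN\varepsilon/m)^m$ up to constants (the exponential correction being absorbed by $c^n$ whenever $\varepsilon \ge e^{-c'n}$). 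The second ingredient is the LCD-net of Lemma~\ref{l: net}: a $(4\alpha/D)$-net in $S_D$ of size $(CD/\sqrt n)^n$. The passage from net to sphere costs at most $\|A\|_{\ell_2 \to \ell_1} \cdot 4\alpha/D \le 4C_0 N\beta\sqrt n/D$ in $\ell_1$-norm, which forces the constraint $D \ge 4C_0 N\beta\sqrt n/(\varepsilon m)$; for $D$ in the resulting range the union bound over the net and summation over dyadic levels of $D$ then give the theorem.

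\textbf{Main obstacle.} The delicate step is matching the single-vector Markov/binomial bound with the LCD-net so that the product telescopes to exactly $(CN\varepsilon/m)^m$, rather than to a weaker $(CN\varepsilon/m)^{m-1}$ or to a bound carrying an undesired pre-factor of the form $C^n$. One must tune three scales simultaneously: the Markov threshold $\varepsilon m/k$, the LCD threshold $4/(\pi D)$ above which Theorem~\ref{t: SBP} provides useful small-ball control, and the net mesh $4\alpha/D$. This tuning is analogous to what is done in the proof of Theorem~\ref{t: random normal}, but with the extra complication that here $m = N - n + 1$ plays the role of an effective ``excess dimension'' rather than the fixed value $1$ from the square case, so each estimate must be tracked uniformly in the parameter $m$.
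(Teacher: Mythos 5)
Your treatment of the compressible vectors is fine, but the incompressible part has a gap that is fatal precisely in the regime the theorem is designed for, namely $m=N-n+1\ll n$. Running an $\e$-net argument over the level sets $S_D$ directly on the event $\norm{Ax}_1<\e m$ requires the approximation error $\norm{A:\ell_2^n\to\ell_1^N}\cdot(4\a/D)\approx C_0N\b\sqrt n/D$ to be at most $\e m$, which forces $D\gtrsim N\sqrt n/(\e m)$; for $m=O(1)$ this is $D\gtrsim n^{3/2}/\e$, so the net from Lemma~\ref{l: net} has cardinality $(CD/\sqrt n)^n\gtrsim(Cn/\e)^n$. No single-vector bound of the form $\binom{N}{k-1}(C\e m/k)^{N-k+1}$ can beat this: the exponent available is at most $N<2n$, so the product is at least $(Cn)^{\,n-N+k-1}$ times powers of $\e$ that do not telescope to $(CN\e/m)^m$. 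This is the same inconsistency between \eqref{i: t<epsilon} and \eqref{i: epsilon<t} that the paper discusses at the start of Section~\ref{sec: continuous}; in the square-matrix proof the level-set nets are only ever applied to the \emph{kernel} event $A'x=0$ (where the threshold $t$ is tied to $\a/D$ and not to a free parameter $\e$), never to the event $\norm{Ax}\le\e\cdot(\dots)$ itself. Your ``main obstacle'' paragraph correctly senses that the three scales cannot be tuned simultaneously, but the obstacle is not a matter of tuning --- the approach does not close.

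The paper's route is different and avoids nets on the incompressible set altogether. After an absolute-continuity perturbation, Lemma~\ref{l: extreme points} shows that the minimizer $x$ of $\norm{Ax}_1$ on $S^{n-1}$ maps to a multiple of an extreme point of the polytope $B_1^N\cap A\R^n$, hence $|\supp(Ax)|=m$ almost surely. This reduces the infimum over the whole sphere to a \emph{finite} family of vectors $y_J$, one for each $m$-subset $J\subset\{1\etc N\}$, where $y_J$ is determined by orthogonality to the $n-1$ rows indexed by $J^c$ and is therefore independent of the $m$ rows indexed by $J$. Theorem~\ref{t: random normal} gives $\LCD_\a(y_J)\ge e^{cn}$ with probability $1-e^{-c'n}$, Theorem~\ref{t: SBP} then gives $\P(|\pr{Y}{y_J}|\le\e)\le C\e+Ce^{-cn}$ for each of the $m$ free rows, the $\ell_1$ version of the Tensorization Lemma gives $(C\e+Ce^{-cn})^m$, and the union bound over $\binom{N}{m}\le(eN/m)^m$ sets $J$ produces exactly $(CN\e/m)^m+c^n$. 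If you want to salvage your outline, the missing idea is this conditional-independence reduction via the extreme points of $B_1^N\cap A\R^n$; the LCD machinery you invoke is then used only for the $(n-1)\times n$ kernel problem, where it belongs.
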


\begin{proof}
Adding to the entries of $A$  small multiples of independent $N(0,1)$
variables, we may assume that the entries of $A$ are absolutely
continuous, so the matrix $A$ is of a full rank almost surely.

We start with an elementary lemma from linear algebra.
The section of $B_1^N$ by the range of the operator $A$ is a convex polytope.
The next lemma shows that the minimum of the $\ell_2$ norm of $Ay$ over the Euclidean unit sphere is attained at
a point $y$, which is mapped to a multiple of a vertex of this polytope. Such vertex will have exactly $m$ non-zero coordinates.
\begin{lemma} \label{l: extreme points}
 Let $N>n$ and let $A: \R^n \to \R^N$ be a random matrix with
 absolutely continuous entries. Let $x \in S^{n-1}$ be a vector for
 which $\norm{Ax}_1$ attains the minimal value. Then
 \[
   |\text{ \rm supp} (Ax)| = N-n+1
 \]
 almost surely.
\end{lemma}

\begin{proof}
Let $E=A \R^n$ and let $K=B_1^N \cap E$.  Set $y=Ax/\norm{Ax}_1$.
Since the function $g: S^{n-1} \to (0,\infty), \ g(u)=\norm{Au}_1$ attains the minimum at $u=x$, the function
$f: K \to (0, \infty), \ f(z)=\norm{A^{-1}|_E \, z}_2$ attains the
maximum over $K$ at $z=y$.
 The convexity of $\norm{\cdot}_2$ implies that
$y$ is an extreme point of $K$. Since $K$ is the intersection of
the octahedron $B_1^N$ with an $n$-dimensional subspace, this means
that $|\text{supp } y| \le N-n+1$. Finally, since the entries of $A$
are absolutely continuous, any coordinate subspace $F \subset \R^N$,
whose dimension does not exceed $N-n$, satisfies $E \cap F = \{0\}$
a.s. Therefore, $|\text{supp } y| = N-n+1$.
\end{proof}
This lemma allows us to reduce the minimum of $\norm{Ax}_1$ over the
whole sphere $S^{n-1}$ to a certain finite subset of it.
Indeed, to each
subset $J \subset \{1 \etc N\}$ of cardinality $m=N-n+1$ corresponds
a unique pair of extreme points $v_J$ and $-v_J$ of $K$ such that
$\sum_{j \in J} |v_J(j)|=1$ and $v_J(j)=0$ whenever $j \notin J$.
Let $A_{J'}$ be the matrix consisting of the rows of $A$, whose
indices belong to $J'=\{1 \etc N \} \setminus J$.
  The vector $y_J \in S^{n-1}$ such that $Ay_J=t v_J$ for
some $t>0$ is uniquely defined by the matrix $A_{J'}$ via the condition $A_{J'}y_J=0$. By Lemma
\ref{l: extreme points},
\[
  \min \{ \norm{Ay}_1 \mid y \in S^{n-1} \}
  = \min \{ \norm{Ay_J}_1 \mid J \subset \{1 \etc N\}, \
                              |J|=m \}.
\]
To finish the proof, we estimate  $\norm{Ay_J}_1$ below and apply
the union bound over the sets $J$. Fix a set $J \subset \{1 \etc N\}$ of cardinality
$m$. Denote the rows of the matrix $A_{J'}$ by $X_1^T \etc X_{n-1}^T$.
The condition $A_{J'}y_J=0$ means that $y_J$ is orthogonal to each of these vectors.
Applying Theorem \ref{t: random normal} to the vectors $X_1 \etc X_{n-1}$, we
conclude that
 \begin{equation}                  \label{exp LCD}
  \P \big( \LCD_\a(y_J) < e^{cn}
     \big)
  \le e^{-c'n}.
 \end{equation}
  Conditioning  on the matrix $A_{J'}$, we may regard the vector
  $y_J$ as fixed.
Denote a row of the matrix $A_J$ by $Y^T$, so the coordinates of $A_J
y_J$ are distributed like $\pr{Y}{y_J}$. If $\LCD_\a(y_J) \ge
e^{cn}$, then by Theorem \ref{t: SBP}
\[
  \P (|\pr{Y}{y_J}| \le \e \mid A_{J'} ) \le C \e,
\]
whenever $\e> C e^{-c n}$. Then taking expectation over $A_{J'}$ and
using \eqref{exp LCD} yields
\[
  \P (|\pr{Y}{y_J}| \le \e  ) \le C \e +C e^{-c n}+e^{-c'n}
\]
for any $\e> 0$. Coordinates $\zeta_j, \ j \in J$ of the vector
$A_Jy_J$ are i.i.d. random variables.
 Tensorization Lemma \ref{l: tensorization}
can be easily reproved for $\sum |\zeta_j|$ instead of $\sum
\zeta_j^2$. In this form it implies
\[
  \P( \norm{A y_J}_1 \le  \e  m)
  =  \P( \norm{A_J y_J}_1 \le \e  m)
  \le \left(C \e +C e^{-c n} \right)^m
\]
for any $\e> 0$. Finally, taking the union bound over all sets $J$,
we obtain
\begin{align*}
  \P( \exists J \ |J|=m, \ \norm{A y_J}_1 \le  \e  m)
  &\le \binom{N}{m} \cdot \left(C \e +C e^{-c n} \right)^m  \\
  &\le \left ( \frac{CN}{m} \cdot \e \right )^m +C e^{-c'' n}. \qedhere
\end{align*}
\end{proof}

Assume now that $N$ is in a fixed proportion to $n$, and define $\d$ by
$N=(1+\d)n$. In this notation, Theorem \ref{t: short Khinchin} reads
\[
 \P \left( \exists \, x \in S^{n-1} \ \norm{Ax}_1< \e \d n \right)
 \le \left( \frac{C \e}{ \d} \right)^{\d n+1} +c^n.
\]
Set $\e = c' \d$, where the constant $c'$  is chosen to make the right hand side of the inequality above smaller than $1$.
Then the previous estimate shows that, with
high probability, the short Khinchin inequality holds for $N=(1+\d)n$
independent subgaussian vectors $X_1 \etc X_N$ with $p=1$ and constants $\a_1=c \d^2, \ \b_1=C_0$:
\[
   \forall \, y \in \R^n \quad
    c \d^2 \norm{y}_2
    \le \frac{1}{N}
          \sum_{j=1}^N |\pr{y}{X_j}|
    \le C_0\norm{y}_2.
\]
 Theorem \ref{t: short Khinchin} proves more than the short Khinchin
 inequality. Combining it with Proposition \ref{p: norm}, we show
 that
 \begin{equation}  \label{1-2 norms}
   \forall x \in \R^n \quad
   \e \d n \norm{x}_2 \le \norm{Ax}_1 \le \sqrt{N}\norm{Ax}_2
   \le C' n \norm{x}_2
\end{equation}
with probability greater than $1-C\exp(-cn)-(\e/\bar{c}\d)^{\d n}$.
The second inequality here follows from Cauchy--Schwarz, and the third one from Proposition  \ref{p: norm}.
Inequality \eqref{1-2 norms}  immediately yields a lower bound for the smallest singular
value of a rectangular random matrix.
\begin{corollary}
  Let $n,N,\d, A,\e$ be as above. Then the smallest singular value of
  $A$ is bounded below by $\e \d\cdot \sqrt{n}$ with probability at least
  $1-c^n-(\e /\bar{c}\d)^{\d n}$.
\end{corollary}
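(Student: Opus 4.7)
The corollary is essentially a repackaging of the chain of inequalities \eqref{1-2 norms} established just above, so the plan is short and routine. The idea is to extract a lower bound on $\|Ax\|_2$ from the already-proved lower bound on $\|Ax\|_1$, by means of the elementary comparison $\|y\|_1 \le \sqrt{N}\,\|y\|_2$ applied to $y = Ax$.

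More concretely, the plan is as follows. First I would invoke \eqref{1-2 norms}, which holds simultaneously for all $x \in \R^n$ on an event of probability at least $1 - C\exp(-cn) - (\e/\bar c\d)^{\d n}$. On that event, the left two terms of \eqref{1-2 norms} give
\[
  \e \d n\, \|x\|_2 \;\le\; \|Ax\|_1 \;\le\; \sqrt{N}\,\|Ax\|_2
  \qquad \text{for all } x \in \R^n.
\]
Dividing through and restricting to $x \in S^{n-1}$ yields
\[
  \|Ax\|_2 \;\ge\; \frac{\e \d n}{\sqrt{N}}
  \qquad \text{for all } x \in S^{n-1}.
\]
Since by assumption $N = (1+\d)n \le 2n$, the right-hand side is at least $\e\d\sqrt{n}/\sqrt{2}$, and the factor $1/\sqrt{2}$ can be absorbed into the unspecified constants (or into a slightly smaller choice of $\e$). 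Taking the infimum over $x \in S^{n-1}$ then gives $s_n(A) \ge \e\d\sqrt{n}$, up to an absolute constant.

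It remains to account for the probability. The bad event is the complement of the one on which \eqref{1-2 norms} holds, whose probability was already bounded by $C\exp(-cn) + (\e/\bar c\d)^{\d n}$. The first term is of the form $c^n$ for some $c \in (0,1)$ (absorbing the constant $C$ by slightly increasing $c$), so the total failure probability is at most $c^n + (\e/\bar c\d)^{\d n}$, matching the statement. There is no real obstacle here: all the difficult work was done in Theorem \ref{t: short Khinchin} and Proposition \ref{p: norm}, which together produced \eqref{1-2 norms}; the present corollary is just the observation that a lower bound in the $\ell_1$-norm on $\R^N$ automatically transfers to a (weaker by a factor $\sqrt{N}$) lower bound in the $\ell_2$-norm, which is exactly the quantity $s_n(A)$.
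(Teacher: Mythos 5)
Your proposal is correct and follows exactly the route the paper intends: the corollary is read off from \eqref{1-2 norms} by dividing the chain $\e\d n\norm{x}_2 \le \norm{Ax}_1 \le \sqrt{N}\norm{Ax}_2$ by $\sqrt{N} \le \sqrt{2n}$ and absorbing the resulting $\sqrt{1+\d}$ factor into the constants, with the failure probability inherited unchanged from \eqref{1-2 norms}. Nothing further is needed.
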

This bound is not sharp for small $\d$. The optimal estimate
\[
      \P \Big( s_n(A) \le \e \big (\sqrt{N} - \sqrt{n-1} \big ) \Big)
    \le (C \e)^{N-n+1} + c^N,
\]
valid
for all $n,N$ and $\e$, was  obtained in
\cite{RV-rectangular}.

Another application of the inequality \eqref{1-2 norms} is a bound on the diameter of a random section of the octahedron $B_1^N$.
A celebrated theorem of Kashin \cite{Ka} states that a random $n$-dimensional
section of the standard octahedron $B_1^N$ of dimension $N = \lfloor (1+\d) n \rfloor$ is
close to the section of the inscribed  ball $(1/\sqrt{N}) B_2^N$.
The optimal estimates for the diameter of a random section of the
octahedron were obtained by Garnaev and Gluskin \cite{GG}. Recently
the attention was attracted to the question whether the almost
spherical sections of the octahedron can be generated by simple
random matrices, in particular by a random $\pm 1$ matrix. A general
result proved in \cite{LPRTV} implies that if $N = \lfloor (1+\d) n \rfloor$ with $\d
\ge c /\log n$, then a random $N \times n$ matrix $A$ with independent
subgaussian entries generates a section of the octahedron $B_1^N$
which is not far from the ball with probability exponentially close
to 1.
More precisely, if $E=A \R^n \subset \R^N$, then
\[
 (1/\sqrt{N}) B_2^N \cap E \subset B_1^N \cap E \subset \varphi (\d) \cdot (1/\sqrt{N}) B_2^N,
\]
where $\varphi(\d) \le C^{1/\d}$.

For random $\pm 1$ matrices this result was improved by
Artstein-Avidan at al. \cite{AFMS}, who proved a polynomial type
estimate for the diameter of a section $\varphi(\d) \le (1/\d)^{\a}$ for $\a>5/2$ and $\d \ge C n^{-1/10}$.
Using \eqref{1-2 norms} we obtain a polynomial estimate for the
diameter of sections for smaller values of $\d$.
\begin{corollary}
  Let $n,N$ be natural numbers such that $n<N<2n$. Denote $\d=(N-n)/n$.
   Let $\xi$ be a centered subgaussian random variable.
    Let $A$ be an $N \times n$ matrix, whose
  entries are independent copies of $\xi$ and let $E=A \R^n$.
  Then for any $\e>0$
  \[
    \P \left ( \frac{1}{\sqrt{N}} B_2^N \cap E \subset B_1^N \cap E \subset \frac{c}{\e \d} \cdot  \frac{1}{\sqrt{N}} B_2^N \right )
    \ge 1- c^n - (\e/\bar{c}\d)^{\d n}.
  \]
\end{corollary}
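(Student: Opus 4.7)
The plan is to derive both inclusions directly from the two-sided bound \eqref{1-2 norms}, which was already established (using Theorem~\ref{t: short Khinchin} together with Proposition~\ref{p: norm}) with probability at least $1-c^n-(\e/\bar c\d)^{\d n}$. No new probabilistic input is needed.

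The left-hand inclusion $\frac{1}{\sqrt N}B_2^N\cap E\subset B_1^N\cap E$ is deterministic: Cauchy--Schwarz gives $\norm{y}_1\le \sqrt N\norm{y}_2$ for every $y\in\R^N$, so even without intersecting with $E$ one has $\frac{1}{\sqrt N}B_2^N\subset B_1^N$. This inclusion therefore requires no analysis of $A$ at all.

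For the right-hand inclusion, I would condition on the event that \eqref{1-2 norms} holds. Take any $y\in B_1^N\cap E$ and write $y=Ax$. The lower bound in \eqref{1-2 norms} gives $\e\d n\norm{x}_2\le\norm{Ax}_1=\norm{y}_1\le 1$, hence $\norm{x}_2\le 1/(\e\d n)$. Feeding this into the upper bound $\sqrt N\norm{Ax}_2\le C'n\norm{x}_2$ from \eqref{1-2 norms} yields
\[
  \norm{y}_2=\norm{Ax}_2\le\frac{C'n}{\sqrt N}\cdot\frac{1}{\e\d n}=\frac{C'}{\e\d\sqrt N},
\]
which is precisely the statement $y\in (c/(\e\d))\cdot(1/\sqrt N)B_2^N$ with $c=C'$.

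There is no real obstacle: the corollary is a geometric translation of \eqref{1-2 norms} into the language of sections, and the constants line up automatically since $n<N<2n$ means $\sqrt N$ and $\sqrt n$ are comparable. The only thing worth double-checking is that the probability estimate in the corollary matches the one attached to \eqref{1-2 norms}; this is immediate after absorbing the constant $C$ in $C\exp(-cn)$ into the base of the exponent.
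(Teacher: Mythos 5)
Your proposal is correct and follows exactly the route the paper intends: the corollary is stated as a direct consequence of \eqref{1-2 norms}, and your derivation (deterministic Cauchy--Schwarz for the left inclusion, the lower bound of \eqref{1-2 norms} to control $\norm{x}_2$ and the upper bound to control $\norm{Ax}_2$ for the right inclusion) is precisely the omitted computation, with the probability bound carried over unchanged up to absorbing constants.
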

 Note that to make the probability bound non-trivial, we have to assume that $\e = c' \d$ for some $0<c'<\bar{c}$.
 In this case the corollary means that a random $n$-dimensional subspace $E$ satisfies
 \[
   \frac{1}{\sqrt{N}} B_2^N \cap E \subset B_1^N \cap E
   \subset \left(\frac{c}{\d^2} \right) \cdot  \frac{1}{\sqrt{N}} B_2^N.
 \]
 This inclusion remains non-trivial as long as $\left(\frac{c}{\d^2} \right)< \sqrt{N}$, i.e., as long as $\d>c N^{-1/4}$.

\subsection{Short Khinchin inequality for $p>2$} \label{ss: p>2}

The case $p>2$ requires a completely different approach. In this
case we will prove the short Khinchin inequality without the assumption that the coordinates of the random vector $X$
are independent. We shall assume instead that
$X$ is isotropic and subgaussian. The first property means that for
any $y \in S^{n-1}$
\[
  \E \pr{X}{y}^2 = 1,
\]
while the second means that for any $y \in S^{n-1}$ the random variable
$\pr{X}{y}$ is centered subgaussian. By Theorem \ref{t: Hoeffding}, any
random vector with independent centered subgaussian coordinates of variance 1 is
isotropic subgaussian. This includes, in particular, an
appropriately scaled random vertex of the discrete cube
$\{-1,1\}^n$.

 We prove the following Theorem \cite{GR}.
\begin{theorem}             \label{t: p>2}
  Let $X$ be an isotropic subgaussian vector in $\R^n$. Let $X_1
  \etc X_N$ be independent copies of $X$. Let $p>2$ and
  $N \ge n^{p/2}$. Then, with high probability,  the inequalites
  \[
    c \norm{y}_2
    \le \left ( \frac{1}{N} \sum_{j=1}^N |\pr{y}{X_j}|^p
         \right )^{1/p}
    \le C \sqrt{p} \norm{y}_2
  \]
  hold  for all $y \in \R^n$.
\end{theorem}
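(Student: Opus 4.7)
Write $A$ for the $N\times n$ matrix with rows $X_j^T$, so the quantity of interest is $\frac{1}{N}\|Ay\|_p^p$. For each fixed $y \in S^{n-1}$, isotropy gives $\E |\langle X, y\rangle|^2 = 1$, hence $\E |\langle X, y\rangle|^p \ge 1$ by H\"older's inequality, while the subgaussian moment bound (part (4) of Theorem~\ref{t: def subgaussian}) yields $\E |\langle X, y\rangle|^p \le (C\sqrt{p})^p$. Both directions of the claimed inequality therefore reduce to one-sided uniform concentration of the empirical $p$-th moment $\frac{1}{N}\|Ay\|_p^p$ around its expectation.

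For the lower bound I would start from a pointwise Paley--Zygmund estimate as in Lemma~\ref{l: LPRT}: the centered subgaussian variable $\langle X, y\rangle$ of unit variance satisfies $\P(|\langle X, y\rangle| \ge 1/2) \ge 1 - \mu$ for some absolute $\mu < 1$. The indicators $\mathbf{1}\{|\langle X_j, y\rangle| \ge 1/2\}$ are i.i.d.\ Bernoulli, and by Hoeffding's inequality (Theorem~\ref{t: Hoeffding}) at least $(1-\mu)N/2$ of them equal $1$ except on an event of probability $e^{-cN}$; on the good event $\frac{1}{N}\|Ay\|_p^p \ge 2^{-p-1}(1-\mu) \ge c^p$. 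A standard $\varepsilon$-net argument on $S^{n-1}$ with an absolute-constant mesh, combined with the upper bound on $\|A: \ell_2^n \to \ell_p^N\|$ produced in the next step to absorb the Lipschitz approximation error, then upgrades this to a uniform statement; the union bound over the $(3/\varepsilon)^n$ net points is comfortably beaten by the per-vector failure probability $e^{-cN}$, since $N \ge n^{p/2} \ge n$.

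The upper bound is the harder half and is where the hypothesis $N \ge n^{p/2}$ becomes essential. For fixed $y$ the summands $|\langle X_j, y\rangle|^p$ have $\psi_{2/p}$ tails, heavier than subexponential for $p > 2$, so naive Bernstein-type concentration is too weak to beat the $(3/\varepsilon)^n$ cost of a net. My plan is a truncation at a threshold $T$: decompose
\[
|\langle X_j, y\rangle|^p = |\langle X_j, y\rangle|^p \mathbf{1}\{|\langle X_j, y\rangle| \le T\} + |\langle X_j, y\rangle|^p \mathbf{1}\{|\langle X_j, y\rangle| > T\}.
\]
The truncated part is bounded by $T^p$ and therefore admits Hoeffding concentration uniformly over a net, provided $T$ is chosen so that the exponent $Nt^2/T^{2p}$ with $t \sim (C\sqrt{p})^p$ beats $n$; this forces $T$ essentially of order $\sqrt{p}\,(N/n)^{1/(2p)}$. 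The tail part is controlled by a subgaussian expectation estimate, $\E |\langle X, y\rangle|^p \mathbf{1}\{|\langle X, y\rangle| > T\} \le (C'\sqrt{p})^p e^{-cT^2}$, together with a uniform control on the number of ``large'' columns that uses $\sup_{y \in S^{n-1}} |\langle X_j, y\rangle| = \|X_j\|_2 \lesssim \sqrt{n}$. The arithmetic works out precisely when $N \ge n^{p/2}$, which is the balance point at which the per-column worst case $n^{p/2}$ is smeared out over enough columns to yield the correct constant $(C\sqrt{p})^p$.

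The main obstacle is calibrating $T$ so that the Hoeffding concentration on the bounded part, the truncation bias, and the tail contribution are controlled simultaneously with matching constant $(C\sqrt{p})^p$. A perhaps cleaner alternative is to bound the supremum of the empirical process $y \mapsto \|Ay\|_p^p - \E\|Ay\|_p^p$ directly via a generic chaining argument, the type of tool the introduction alludes to when it mentions majorising measures.
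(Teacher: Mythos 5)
Your lower bound is in substance the paper's own: the paper invokes Proposition \ref{p: rectangular} (whose proof is precisely your Paley--Zygmund-plus-net argument, and which applies here by the remark following it, since only the independence of the coordinates of $Ay$ is used) and then passes from $\ell_2$ to $\ell_p$ via $\norm{z}_2 \le N^{1/2-1/p}\norm{z}_p$, which spares you from re-running the net argument in the $\ell_p$ metric. Nothing to object to there.

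The upper bound is where you depart from the paper, and where your sketch has a genuine gap. What the paper actually does is the ``cleaner alternative'' of your last sentence: it shows that the process $X_{s,y}=\sum_j s_j\pr{X_j}{y}$ indexed by $T=B_{p^*}^N\times B_2^n$ is subgaussian with respect to the Euclidean metric, compares it via Theorem \ref{t: Majorizing Measure} with the Gaussian process $G_{s,y}=\pr{s}{Z}+\pr{y}{Y}$, and obtains $\E\sup_{y\in B_2^n}N^{-1/p}\norm{Ay}_p\le C(\sqrt{p}+\sqrt{n}\,N^{-1/p})\le C'\sqrt{p}$, a Gordon--Chevet type bound, finishing with Chebyshev. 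Your truncation route, as written, does not close because of the tail term. With $T\sim\sqrt{p}\,(N/n)^{1/(2p)}$ (your calibration for the Hoeffding step), the number of indices $j$ with $|\pr{X_j}{y}|>T$ that you can rule out with probability $e^{-Cn}$ --- the price of the union bound over the net --- is only of order $n/T^2\sim n^{1/2+1/p}/p$, and bounding each surviving summand by the worst case $\sup_{y}|\pr{X_j}{y}|^p=\norm{X_j}_2^p\sim n^{p/2}$ yields a tail contribution of order $(n/T^2)\cdot n^{p/2}/N\gtrsim n^{1/2+1/p}/p$, whose $p$-th root grows polynomially in $n$. A single truncation level plus a worst-case bound on the survivors is therefore not enough. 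The repair is a dyadic peeling between $T$ and $\sqrt{n}$: at level $T_\ell=2^\ell T$ the exceedance count is at most $Cn/T_\ell^2$ uniformly over the net outside an event of probability $e^{-Cn}$ (using $T_\ell^2\gg\log N$), so level $\ell$ contributes at most $C^p nT_\ell^{p-2}/N$ to $\frac{1}{N}\norm{Ay}_p^p$; since $p>2$ this is a geometric series dominated by its top term $T_L\sim\sqrt{n}$, giving a total of $C^p n^{p/2}/N\le C^p$ --- which is exactly where $N\ge n^{p/2}$ enters. With that multi-scale refinement (plus a Lipschitz estimate for the truncated functional to pass from the net to the sphere) your approach can be completed, but as stated the uniform control of the tail is the missing step, and the chaining argument you defer to is the paper's actual proof.
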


\begin{proof}
As in the classical Khinchin inequality, the first inequality in
Theorem~\ref{t: p>2} is easy. Denote, as before, by $A$ the $N
\times n$ matrix with rows $X_1 \etc X_N$.
 Assume that $n$ is large enough, so
that $N \ge n^{p/2} \ge \d_0^{-1} n$, where $\d_0$ is the constant
from Proposition \ref{p: rectangular}.
By the remark after this proposition, it is applicable to the matrix $A$
despite the fact that its entries are dependent.
Combining Proposition  \ref{p: rectangular}
with the inequality $\norm{y}_2 \le N^{1/2-1/p} \cdot \norm{y}_p$,
valid for all $y \in \R^N$, we obtain
\[
    \P \big( \min_{x \in S^{n-1}} \norm{Ax}_p \le c_1 N^{1/p}
         \big)
    \le e^{-c_2 N},
\]
which establishes the left inequality with probability exponentially
close to $1$.

If the vectors $X_1 \etc X_N$ were independent standard gaussian,
then the right inequality in Theorem \ref{t: p>2} would follow from the classical Gordon--Chevet inequality
for the norm of the Gaussian linear operator, see e.g., \cite{DS}.
We will establish an analog of this inequality for isotropic subgaussian vectors.
To this end, we use the method of majorizing
measures, or generic chaining, developed by Talagrand \cite{Ta}. Let
$\{X_t\}_{t \in T}$ be a real-valued random process, i.e., a collection of
interdependent random variables, indexed by some set $T$. In the
setup below, we can assume that $T$ is finite or countable,
eliminating the question of measurability of $\sup_{t \in T} X_t$.
We shall call the process $\{X_t\}_{t \in T}$ centered if $\E X_t=0$
for all $t \in T$.
\begin{definition}
 Let $(T,d)$ be a metric space. A random process $\{X_t\}_{t \in T}$
 is called subgaussian with respect to the metric $d$ if for any
 $t,s \in T, \ t \neq s$ the random variable $(X_t-X_s)/d(t,s)$ is
 subgaussian.
 A random process $\{G_t\}_{t \in T}$
 is called Gaussian with respect to the metric $d$ if for any
 finite set $F \subset T$ the joint distribution of $\{G_t\}_{t \in F}$
 is Gaussian, and for any
 $t,s \in T, \ t \neq s$  $(G_t-G_s)/d(t,s)$ is
 $N(0,1)$ random variable.
\end{definition}
We use a fundamental result of Talagrand \cite{Ta} comparing subgaussian and Gaussian processes.
\begin{theorem}[Majorizing Measure Theorem]
    \label{t: Majorizing Measure}
  Let $(T,d)$ be a metric space, and let $\{G_t\}_{t \in T}$ be a
  Gaussian random process with respect to the metric $d$. For any centered random
  process $\{X_t\}_{t \in T}$, which is subgaussian with respect to
  the same metric,
  \[
    \E \sup_{t \in T} X_t \le C \, \E \sup_{t \in T} G_t.
  \]
\end{theorem}

For $(s,y) \in \R^N \times \R^n$ define the random variable
$X_{s,y}$ by
\[
  X_{s,y}= \sum_{j=1}^N s_j \pr{X_j}{y}.
\]
Let us show that for any $T \subset B_2^N \times B_2^n$, the random process $\{
X_{s,y} \}_{(s,y) \in T}$ is subgaussian with respect to the
Euclidean metric. For any $(s,y), (s',y') \in T$,
\[
  X_{s,y}-X_{s',y'}
  = \sum_{j=1}^N \Big ( (s_j-s_j') \pr{X_j}{y} +s_j' \pr{X_j}{y-y'}
                  \Big ).
\]
Let  $\l \in \R$. Since the vector $X$ is centered subgaussian, for
any $z \in \R^N$  $\exp ( \l \pr{X}{z}) \le \exp (C \l^2
\norm{z}_2^2)$. Hence, using independence of $X_j$ and applying
Cauchy--Schwartz inequality, we get
\begin{align*}
  \E &\exp \big (\l (X_{s,y}-X_{s',y'} ) \big )  \\
  &= \prod_{j=1}^N \E \Big [ \exp \big (\l (s_j-s_j') \pr{X_j}{y} \big )
     \, \cdot
     \exp \big (\l s_j' \pr{X_j}{y-y'} \big ) \Big ]  \\
  &\le \prod_{j=1}^N \exp \big (2 C \l^2 ((s_j-s_j')^2 \norm{y}_2^2) \big )
     \, \cdot
     \prod_{j=1}^N \exp \big (2 C \l^2 (s_j'^2 \norm{y-y'}_2^2) \big )  \\
  &\le \exp \big ( 2C \l^2 (\norm{s-s'}_2^2 + \norm{y-y'}_2^2 ) \big ).
\end{align*}
 The last inequality follows because $(s,y), (s',y') \in T \subset B_2^N \times B_2^n$.
 By Theorem \ref{t: def subgaussian} this means that the random
 variable
 \[
   \frac{X_{s,y}-X_{s',y'}}{\norm{(s,y)-(s',y')}_2}
 \]
  is
 subgaussian, so the process $(X_{s,y})_{(s,y) \in T}$ is subgaussian with respect to the $\ell_2$ metric.

Now we will consider a Gaussian process with respect to the same metric.
Let $Y$ and $Z$ be independent standard Gaussian vectors in $\R^n$ and $\R^N$
respectively. Set
\[
  G_{s,y} = \pr{s}{Z} + \pr{y}{Y}.
\]
Then for any $T \subset \R^N \times \R^n$, $\{G_{s,y}\}_{(s,y) \in
T}$ is a Gaussian process with respect to the Euclidean metric. Let
$1/p+1/p^*=1$, and set $T= B_{p^*}^N \times B_2^n \subset B_2^N
\times B_2^n$. By the Majorizing Measure Theorem
  \[
    \E \sup_{(s,y) \in T} X_{s,y}
    \le C \, \E \sup_{(s,y) \in T} G_{s,y}.
  \]
Therefore, writing the $\ell_p$ norm as the supremum of the values of functionals over the unit ball of the dual space, we obtain
\begin{align*}
  \E \sup_{y \in B_2^n} \left ( \frac{1}{N}
       \sum_{j=1}^N |\pr{X_j}{y}|^p \right )^{1/p}
  &= \frac{1}{N^{1/p}} \E \sup_{s \in B_{p^*}^N} \sup_{y \in B_2^n}
          \sum_{j=1}^N s_j \pr{X_j}{y}   \\
  &\le \frac{C}{N^{1/p}} \E \sup_{s \in B_{p^*}^N} \sup_{y \in B_2^n}
          G_{s,y}
  = \frac{C}{N^{1/p}} \big (\E \norm{Z}_p + \E \norm{Y}_2 \big ) \\
  &\le C \left ( \sqrt{p} + \frac{\sqrt{n}}{N^{1/p}} \right).
\end{align*}
Since $N \ge n^{p/2}$, the last expression does not exceed $C'
\sqrt{p}$. To complete the proof we combine this estimate of the
expectation with Chebyshev's inequality.
\end{proof}

\begin{remark}
  The same proof can be repeated for an general normed space, instead of the space $\ell_p$. This would establish a version of Gordon--Chevet inequality valid for a general isotropic subgaussian vector. We omit the details.
\end{remark}

Note that Theorem \ref{t: p>2} implies that the matrix $A$ formed by
the vectors $X_1 \etc X_N$ defines a subspace of $\ell_p^N$ which is
close to Euclidean, so Theorem \ref{t: p>2} can be viewed as an analog of
the Isomorphic Dvoretzky's Theorem of Milman and Schechtman \cite{MSc}.
 This, in particular, means that the bound $N \ge
n^{p/2}$ is optimal (see  e.g., \cite{GM} for details).

\section{Random unitary and orthogonal perturbations} \label{sec: unitary}

 The need for probabilistic bounds for the smallest singular value of a random matrix from a certain class arises in many intrinsic problems of the random matrix theory. Such bounds are the standard step in many proofs based on the convergence of Stieltjes transforms of the empirical measures to the Stieltjes transform of the limit measure. One of the examples, where such bounds become necessary is the Circular Law \cite{GT, TV circ1, TV circ2}. The proof of this law requires the lower bound on the smallest singular value of a random matrix with i.i.d. entries, which was obtained above.
  Another setup, where such bounds become necessary, is provided by the Single Ring Theorem of Guionnet, Krishnapur and Zeitouni \cite{GKZ}. The proof of this theorem deals with another natural class of random matrices, namely random unitary or orthogonal perturbations of a fixed  matrix.

  Let us consider the complex case first. Let $D$ be a fixed $n \times n$ matrix, and let $U$ be a random matrix uniformly distributed over the unitary group $U(n)$. In this case the solution of the qualitative invertibility problem is trivial, since the matrix $D+U$ is non-singular with probability $1$. This can be easily concluded by considering the determinant of $D+U$. The determinant, however, provides a poor tool for studying the quantitative invertibility problem. In regard to this problem we will prove the following theorem.
  \begin{theorem} \label{th: unitary}
   Let $D$ be  an arbitrary $n \times n$ matrix, $n \ge 2$. Let $U$ be a random matrix uniformly distributed over the unitary group $U(n)$. Then
   \[
    \P( \smin(D+U) \le t) \le t^c n^C \quad \text{for all } t>0.
   \]
   Here $C$ and $c$ are absolute constants.
  \end{theorem}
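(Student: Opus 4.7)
The plan is to adapt the compressible--incompressible framework of Sections \ref{sec: continuous}--\ref{sec: random normal} to the present setting. As the paper's introduction indicates, arithmetic structure plays no role here because the individual small-ball estimates are extremely strong; the main obstacle is instead the dependence among the entries of $U$, which I will neutralize by a leave-one-out conditioning that exposes auxiliary independent randomness.

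As a preliminary reduction, write the singular value decomposition $D = V_1 \Sigma V_2^*$ with $V_1, V_2$ unitary and $\Sigma$ nonnegative diagonal. Since $V_1^* U V_2$ has the same Haar distribution as $U$ and $s_n(D+U) = s_n(\Sigma + V_1^* U V_2)$, we may assume $D = \Sigma$ is diagonal. The key individual bound is then as follows: for any fixed $x \in S^{n-1}$ the vector $Ux$ is uniformly distributed on the unit sphere of $\C^n$, so $(D+U)x$ lies within distance $t$ of the origin only if $Ux$ lies in a ball of radius $t$ centered at $-Dx$. A spherical-cap estimate yields
\[
\P\bigl( \|(D+U)x\|_2 \le t \bigr) \le (Ct)^{2n-1},
\]
and this probability is zero unless $\|Dx\|_2 \in [1-t,\, 1+t]$.

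Next, decompose $S^{n-1}$ into compressible and incompressible vectors as in Definition \ref{d: compressible}. On the compressible piece the individual bound combined with a volumetric $\varepsilon$-net of cardinality at most $\binom{n}{\delta n}(3/\varepsilon)^{\delta n}$ suffices once $\varepsilon$ is taken comparable to $t/\|D+U\|$. The potentially large parameter $\|D\|$ is handled in two regimes: when $\|D\|$ is polynomial in $n$ the net argument absorbs it into the $n^C$ factor, while when $\|D\|$ is huge, every $x$ with $\|Dx\|_2\in [1/2,2]$ lives essentially in the span of the small diagonal entries of $D$, hence is itself close to sparse and can be absorbed into the compressible analysis with a reduced effective dimension; for $x$ outside this range the triangle inequality already forces $\|(D+U)x\|_2$ away from $0$.

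The incompressible part is the heart of the argument and the principal obstacle. We would like to apply Lemma \ref{l: via distance}, but the columns of $D+U$ are correlated through $U^*U = I$. The crucial observation is that, conditional on the columns $u_j$ of $U$ for $j \ne k$, the $k$-th column $u_k$ is uniformly distributed on the complex unit circle $\{e^{i\theta}v : \theta \in [0,2\pi)\}$ lying in the one-dimensional orthogonal complement of $\text{span}\{u_j : j \ne k\}$. The distance from $(D+U)e_k = d_k e_k + e^{i\theta} v$ to the now-deterministic subspace $H_k = \text{span}\{(D+U)e_j : j \ne k\}$ equals $|a + e^{i\theta} b|$ with $a,b \in \C$ fixed, and the L\'evy concentration function of this scalar is bounded by $Ct/|b|$ for $|b|\ge t$. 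The genuine difficulty is to rule out the smallness of $|b| = |\langle z, v\rangle|$, where $z$ spans $H_k^\perp$ and $v$ is the direction of $u_k$; both quantities depend intricately on the same conditioned data. To decouple this anti-concentration question I would instead condition on only $n-2$ of the columns and exploit the explicit Jacobi-type joint density of the remaining pair $(u_k,u_\ell)$ on the Stiefel manifold $V_2(\C^n)$: the two-dimensional genuine randomness thus exposed constitutes the ``perturbation with independent entries'' announced in the introduction, and, combined with a union bound over the distinguished column and the distance reduction of Lemma \ref{l: via distance}, yields the desired polynomial bound $t^c n^C$.
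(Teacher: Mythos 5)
Your route diverges substantially from the paper's: the paper does not use the compressible/incompressible decomposition here at all, but instead writes $U=V^{-1}R^{-1}W$ with $V$ Haar, $R=\diag(r,1,\dots,1)$ a global phase perturbation, and $W=\exp(\e S)$ a local Gaussian perturbation, then runs a three-case block analysis on $A=RVD+I+\e S$ according to how invertible the minor $B$ is. Your proposal contains a genuine gap precisely at the point you yourself flag as "the genuine difficulty." After conditioning on $n-1$ columns, the distance $\dist\big((D+U)e_k,H_k\big)=|a+e^{i\theta}b|$ carries only one real degree of freedom, and anti-concentration requires a lower bound on $|b|=|\pr{z}{v}|$, where $z$ and $v$ are both functions of the conditioned columns and of the arbitrary matrix $D$. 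Your fix — condition on $n-2$ columns and invoke the joint density of the remaining pair on the Stiefel manifold — is an assertion, not an argument: the conditioned data (including $D$) can be degenerate in ways that kill the two-dimensional randomness you expose (this is exactly what forces the paper into its Case 1/2/3 trichotomy, where different pieces of the auxiliary randomness must be used depending on $\smin(B)$), and nothing in your sketch rules these configurations out or quantifies the anti-concentration uniformly in $D$. Since this step *is* the theorem — everything else is routine — the proof is not complete.

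There is a second, more concrete error in the compressible part. The claim that when $\norm{D}$ is huge every $x$ with $\norm{Dx}_2\in[1/2,2]$ is "close to sparse" is false: take $D=\diag(M,0,\dots,0)$ with $M$ large; then $\norm{Dx}_2\le 2$ only forces $|x_1|\le 2/M$, and the remaining mass of $x$ can be spread over all $n-1$ coordinates. Relatedly, the decomposition $S^{n-1}=\Comp\cup\Incomp$ with a constant proximity parameter $\rho$ costs $\rho\norm{D+U}$ when you pass from a sparse approximant to a genuinely compressible vector, and no lower bound on $\inf_{y\ \mathrm{sparse}}\norm{(D+U)y}_2$ can exceed $O(1)$ (again $D=\diag(M,0,\dots,0)$), so the fattening step breaks once $\norm{D}\gg 1$; shrinking $\rho$ to $\sim 1/\norm{D}$ degrades Lemma \ref{l: spread} and hence the distance reduction on $\Incomp$. (The paper sidesteps both issues: it assumes $\norm{D}\le K$ in the presented proof and decomposes the sphere into the sets $S_j=\{|x_j|\ge 1/\sqrt n\}$ rather than by compressibility.) Your preliminary reductions — diagonalizing $D$ by unitary invariance and the spherical-cap bound $\P(\norm{(D+U)x}_2\le t)\le (Ct)^{2n-1}$ for fixed $x$ — are correct and are a reasonable starting point, but they do not carry the argument past the two obstacles above.
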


  An important feature of Theorem \ref{th: unitary} is its independence of the matrix $D$. This independence is essential for the Single Ring Theorem.

  The statement similar to Theorem \ref{th: unitary} fails in the real case, i.e.,  for random matrices distributed over the orthogonal group. Indeed, suppose that $n$ is odd. If $-D, U \in SO(n)$, then $-D^{-1} U \in SO(n)$ has the eigenvalue $1$, and the matrix $D+U= D(D^{-1}U+ I_n)$ is singular. Therefore, if $U$ is uniformly distributed over $O(n)$, then $\smin(D+U)=0$ with probability at least $1/2$. Nevertheless, it turns out that this is essentially the only obstacle to the extension of Theorem \ref{th: unitary} to the orthogonal case.
\begin{theorem}[Orthogonal perturbations]					\label{th: orthogonal}
  Let $D$ be a fixed $n \times n$ real matrix, $n \ge 2$. Assume that
  \begin{equation}				\label{eq: D}
  \|D\| \le K, \quad \inf_{V \in O(n)} \|D-V\| \ge \d
  \end{equation}
  for some $K \ge 1$, $\d \in (0,1)$.
  Let $U$ be a random matrix uniformly distributed over the orthogonal group $O(n)$.
  Then
  $$
  \P(\smin(D+U) \le t) \le t^c (Kn/\d)^C, \quad t > 0.
  $$
\end{theorem}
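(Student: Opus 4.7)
The plan is to mirror the three-step structure of the invertibility proofs for matrices with i.i.d.\ entries (compressible/incompressible split, followed by a distance-to-subspace argument reduced to a small ball probability), the principal difficulty being that the entries of a Haar orthogonal $U$ are highly dependent. The deterministic starting point is the triangle inequality $\|(D+U)x\|_2 \ge \bigl|\|Dx\|_2 - 1\bigr|$, valid since $\|Ux\|_2 = 1$; this already confines the potentially problematic directions to the equatorial set
$S_t(D) := \{ x \in S^{n-1} : |\,\|Dx\|_2 - 1\,| \le t \}$.
Via the polar decomposition $D = WP$ with $P = \sqrt{D^T D}$, the hypothesis $\inf_{V \in O(n)} \|D - V\| \ge \delta$ is equivalent to $\|P - I\| \ge \delta$, so at least one singular value of $D$ lies $\delta$-away from $1$. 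This is what will ultimately control the geometry and size of $S_t(D)$.

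For a fixed $x \in S^{n-1}$, the vector $Ux$ is uniform on the sphere, so $\|(D+U)x\|_2^2 = \|Dx\|_2^2 + 1 + 2\langle Dx, Ux\rangle$ depends on a single spherical-cap variable. A standard cap estimate gives a strong pointwise bound $\P\bigl(\|(D+U)x\|_2 \le t\bigr) \le (Ct)^{n-1}$ for $x \in S_t(D)$, and probability zero outside $S_t(D)$. For the distance-to-subspace step one also needs a conditional small ball bound for $\dist(\mathrm{col}_k(D+U), H_k)$, and here the dependence between entries of $U$ is the decisive obstacle: conditioning on $n-1$ columns of $U$ pins the remaining column to $\pm v$ for a deterministic $v$, which yields only a $\pm 1$ random variable. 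The resolution---which is the ``introduction of perturbations with independent entries'' advertised in the introduction---is to condition on only $n-2$ columns of $U$. Given those, the remaining two columns form a random orthonormal basis of the orthogonal $2$-plane, parametrized by one continuous angle $\theta \in [0, 2\pi)$ (plus an independent sign for reflections). This continuous parameter takes the role of Theorem \ref{t: SBP} and yields a polynomial small ball bound $\le t^c \cdot \mathrm{poly}(n, K, 1/\delta)$, provided the coefficients of $\cos\theta$ and $\sin\theta$ in the resulting linear combination are non-degenerate; the hypothesis on $D$ is precisely what precludes this degeneracy.

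With these ingredients the overall argument follows the template of Sections \ref{sec: continuous}--\ref{sec: random normal}: an $\epsilon$-net argument restricted to $S_t(D) \cap \Comp$ handles compressible bad vectors (the $\delta$-gap rules out exact compressible kernels, analogously to Lemma \ref{l: compressible}), while the two-column conditional small ball bound, fed into the distance-to-subspace reduction of Lemma \ref{l: via distance}, handles the incompressible ones. The main obstacle, and the technical heart of the proof, is keeping the dependence on $(K,n,1/\delta)$ polynomial rather than exponential. A naive $\epsilon$-net on $S_t(D)$ pays a factor $(CK/t)^{n-1}$ in the triangle-inequality approximation, which dwarfs the pointwise bound $(Ct)^{n-1}$. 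To avoid this, one must construct a net adapted to the anisotropy of $S_t(D)$---a thin tube of transverse thickness $\sim \sqrt{t/\delta}$ around the $(n-2)$-dimensional variety $\{\|Dx\|_2 = 1\}$---and, in parallel, control the two-column small ball bound with constants that depend only polynomially on $K/\delta$. Balancing these two estimates so that the final bound takes the advertised form $t^c(Kn/\delta)^C$ is where all the parameters of the problem must be tuned against each other.
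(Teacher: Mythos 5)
These notes do not actually prove Theorem \ref{th: orthogonal}: only the unitary analogue is proved here, and for the orthogonal case the text explicitly defers to \cite{RV uni-ort}. The method used there (and sketched here for the unitary case) is quite different from yours: one writes $U=V^{-1}R^{-1}W$ with $W=\exp(\e S)$ for a Gaussian skew-symmetric $S$ and $R$ a global rotation, thereby \emph{injecting} independent continuous randomness, and then runs a case analysis on the invertibility of an $(n-1)\times(n-1)$ block; there is no compressible/incompressible split, no distance-to-subspace reduction, and no net over a high-dimensional subset of the sphere. Your architecture is therefore a genuinely different route, and as written it has two gaps that I do not think are merely technical.

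First, the decisive step --- the conditional small ball bound for $\dist(X_k,H_k)$ after fixing $n-2$ columns of $U$ --- rests on an asserted non-degeneracy of the coefficients of $\cos\theta$ and $\sin\theta$, which you claim ``the hypothesis on $D$ precludes.'' But $\inf_{V}\|D-V\|\ge\d$ is a single global spectral condition (equivalently, some singular value of $D$ is $\d$-far from $1$), whereas the non-degeneracy you need is a statement about the random conditional configuration for most choices of the $n-2$ conditioned columns and most $k$. The acid test is $n$ odd, $U$ in the reflection component, and $D$ within $\d$ of $-SO(n)$: then $D+U$ is \emph{exactly} singular at $\d=0$, and only the $\d$-deviation rescues invertibility; your sketch does not explain how the single free angle $\theta$ detects this, nor does it treat the two components of $O(n)$ separately, which the actual proof must do. (Note also that the reduction to $S_t(D)$ buys nothing in the hard cases: for $D=I+\d e_1e_1^T$ the set $S_t(D)$ is an equatorial band of width $\sim\sqrt{t/\d}$, i.e.\ almost the whole sphere.) Second, the parameter balancing you defer is where the claimed bound $t^c(Kn/\d)^C$ lives or dies: any $\e$-net of an $(n-2)$-dimensional set has cardinality at least $(c/\e)^{n-2}$, the approximation error forces $\e\sim t/K$, and against the pointwise bound $(Ct)^{n-1}$ the union bound yields $(CK)^{n}t$, which is exponentially worse than polynomial in $n$. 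Restricting the net to compressible vectors helps only if the sparsity parameter is taken of order $1/\log K$, which then degrades the spread constants $\nu_i$ feeding the incompressible argument --- a feedback loop your proposal does not close. So the proposal is a reasonable research plan, but it is not yet a proof, and it is not the proof the paper relies on.
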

Similarly to the complex case, this bound is uniform over all matrices $D$ satisfying \eqref{eq: D}. This condition is relatively mild: in the case when $K=n^{C_1}$ and $\d=n^{-C_2}$ for some constants $C_1,C_2>0$, we have
  $$
  \P(\smin(D+U) \le t) \le t^c n^C, \quad t > 0,
  $$
as in the complex case.
It is possible that the condition $\|D\| \le K$ can be eliminated from the Theorem~\ref{th: orthogonal}.
However, this is not crucial because such condition
already appears in the Single Ring Theorem.

   The problems we face in the proofs of Theorems \ref{th: unitary} and \ref{th: orthogonal} are significantly different from those appearing in Sections \ref{sec: continuous}, \ref{sec: random normal}. In the case of the independent entries the argument was based on the analysis of the small ball probability $\P(\norm{Ax}_2<t)$ or $\P(\norm{Ax}_1)<t$ for a fixed vector $x$. As shown in Section \ref{sec: small ball}, the decay of this probability as $t \to 0$ is determined by the arithmetic structure of the coordinates of $x$. In contrast to this, the arithmetic structure plays no role in Theorems \ref{th: unitary} and \ref{th: orthogonal}. The difficulty lies elsewhere, namely in the lack of independence of the entries of the matrix. We will have to introduce a set of the independent random variables artificially. These variables have to be chosen in a way that allows one to express  tractably  the smallest singular value in terms of them.
    To illustrate this approach, we present the proof of Theorem \ref{th: unitary} below. The proof of Theorem \ref{th: orthogonal} starts with the similar ideas, but requires new and significantly more delicate arguments. We refer the reader to \cite{RV uni-ort} for the details.

\begin{proof}[Proof of Theorem \ref{th: unitary}]
   Throughout the proof we fix $t>0$ and introduce several small  and large  parameters depending on $t$. The values of such parameters will be chosen of orders $t^a$, where $0<a<1$ for the small parameters, and $t^{-b}$, $0<b<1$ for the large ones. This would allow us to introduce an hierarchy of parameters, and disregard the terms corresponding to the smaller ones. Also, note that we have to prove Theorem \ref{th: unitary} only for $t< n^{-C'}$ for a given constant $C'$, because for larger values of $t$ its statement can be made vacuous by choosing a large constant $C$. This observation would allow us to use bounds of the type $\sqrt{n} t^a \le t^{a'}$ whenever $a<a'$ are constants.

   {\small \sffamily
   For convenience of a reader, we include a special paragraph entitled ``Choice of the parameters'' in the analysis of each case. In these paragraphs we list the constraints that the small and large parameters must satisfy, as well as the admissible numerical values of those parameters. These paragraphs will be printed in sans-serif and can be omitted on the first reading.
   }

   To simplify the argument, we will also assume that $\norm{D} \le K$, as in Theorem \ref{th: orthogonal}. The proof of Theorem \ref{th: unitary} without this assumption can be found in \cite{RV uni-ort}.

   \subsection{Decomposition of the sphere and introduction of local and global perturbations}
   We have to bound $\smin(U+D)$, which is the minimum of $\norm{(D+U)x}_2$ over the unit sphere. For every $x \in S^{n-1}$,  there is a coordinate $x_j$ with $|x_j| \ge 1/\sqrt{n}$. Hence, the union bound yields
   \[
    \P(\smin(D+U) \le t) \le \sum_{j=1}^n \P\left( \inf_{x \in S_j} \norm{(U+D)x}_2 \le t \right),
   \]
   where
   \[
    S_j= \left \{ x \in S^{n-1} \mid |x_j| \ge 1/\sqrt{n} \right \}.
   \]
   All terms on the right hand side of the inequality above can be estimated in the same way. So, without loss of generality we will consider the case $j=1$. Note that the application of the crude union bound here may have increased the probability estimate of Theorem \ref{th: unitary} $n$ times. This, however, is unimportant, since we allow the coefficient $n^C$ anyway.

   The proof of the theorem reduces to the estimate of
   \begin{equation}\label{eq: set S_1}
     \P \left( \inf_{x \in S_1} \norm{(U+D)x}_2 \le t \right).
   \end{equation}
   The structure of the set $S_1$ gives a special role to the first coordinate. This will be reflected in our choice of independent random variables. If $R,W \in U(n)$ are any matrices, and $V$ is uniformly distributed over $U(n)$, then the matrix $U=V^{-1} R^{-1}W$ is uniformly distributed over $U(n)$ as well. Hence, if we assume that the matrices $R$ and $W$ are random and independent of $V$, then this property would remain valid for $U$. The choice of the distributions of $R$ and $W$ is in our hands. Set
   \[
    R= \text{diag}(r, 1 \etc 1),
   \]
   where $r$ is a random variable uniformly distributed over $\{ z \in \mathbb{C} \mid |z|=1 \}$. This is a ``global'' perturbation, since we will need the values of $r$, which are far from $1$. The matrix $W$ will be ``local'', i.e., it will be a small perturbation of the identity matrix. Let $\e>0$ be a ``small'' parameter, and set $W=\exp(\e S)$,
      where $S$ is an $n \times n$ skew-symmetric matrix, i.e. $S^*=-S$. Although the matrix $W$ is  unitary,  the dependence of its entries on the entries of $S$ is hard to trace. To simplify the structure, we consider the linearization of  $W$,
   \[
    W_0=I+\e S.
   \]
   The matrix $W_0$ is not unitary, but its distance to the group $U(n)$ is at most $\norm{W-W_0} \le \e^2 \norm{S}^2$.
    Thus, for any $x \in S_1$,
   \begin{align*}
    \|(D+U)x\|_2
  &= \|(D + V^{-1} R^{-1} W) x\|_2
  = \|(RVD + W)x\|_2   \\
  &\ge \|(RVD + W_0)x\|_2 - \norm{W-W_0}  \\
  &\ge \|(RVD + I + \e S)x\|_2 -  \e^2 \|S\|^2.
   \end{align*}
   We will use $S$ to introduce a collection of independent random variables.
   Set
   \begin{equation}				\label{eq: S}
S =
\begin{bmatrix}
  \sqrt{-1}\, s & -Z^T \\
  Z & 0
\end{bmatrix}
\end{equation}
where $s \sim N_{\R}(0,1)$ and $Z \sim N_{\R}(0, I_{n-1})$ are independent real-valued standard normal random
variable and vector respectively. Clearly, $S$ is skew-Hermitian. If $K_0$ is a ``large'' parameter, $K_0=t^{-b_0}$, then by Proposition \ref{p: norm},  \[
  \P(\norm{Z}_2 \ge K_0 \sqrt{n}) \le \exp (-c_0 K_0^2 n) \le t
 \]
 for all sufficiently small $t>0$.
  This means that $\norm{S}^2 \le K_0^2 n$ with probability  close to $1$. Disregarding an event of a small probability, we reduce the problem to obtaining a lower bound for
\[
  \inf_{x \in S_1} \norm{(RVD + I + \e S)x}_2,
\]
provided that the bound we obtain is of order at least $\e$. Indeed, we may assume that $K_0^2 n \e^2 \ll \e$, if $\e$ is chosen small enough.
   \vskip 0.05in
   {\small \sffamily
   {\bf Choice of the parameters}. The second order term  $ 2 \e^2 \|S^2\|$ should not affect the estimate of $\P( \inf_{x \in S_1} \norm{Ax} \le t)$. To guarantee it,  we require that
   \[
     K_0^2 n \e^2 \le t/2.
   \]
   Also, to bound the probability by a power of $t$, we have to assume that
   \[
     \exp (-c_0 K_0^2 n) \le t^c
     \]
     for some $c>0$. Both inequalities are satisfied for small $t$ if $\e=t^{0.6}$ and $K_0=t^{-0.05}$.
   }
   \vskip 0.05in

  Starting from this moment we will condition on the matrix $V$ and evaluate the conditional probability with respect to the random matrices $R$ and $S$. The original random structure will be lost after this conditioning. However, we introduced a new independent structure in the form of the matrices $R$ and $S$, and it will be easier to manipulate.
  Each of the matrices $R$ and $S$ alone is insufficient to obtain any meaningful estimate.  Nevertheless, the combination of these two sources of randomness, a local perturbation $S$ and a global perturbation $R$, produces enough power to conclude that $RVD + I + \e S$ is typically well invertible, and this leads to the proof of Theorem \ref{th: unitary}.

  Summarizing the previous argument, we conclude that our goal is to bound
  \[
   \P(\inf_{x \in S_1} \norm{Ax}_2 \le t),
  \]
  where
  \begin{equation} \label{eq: A}
    A = RVD + I + \e S
     =:
    \begin{bmatrix}
      A_{11} & Y^T \\
      X & B^T
   \end{bmatrix},
  \end{equation}
  $X,Y \in \C^{n-1}, \ B$ is an $(n-1) \times (n-1)$ matrix,
  and $\e=t^a$. Here we decomposed the matrix $A$ separating the first coordinate to emphasize its special role.
  For  future reference we write $A$ in terms of the components of the matrix $VD$, and random variables $r, s$, and $Z$ exposing the dependence on these random parameters:
  \begin{equation} \label{eq: A precise}
    A  =
   \begin{bmatrix}
      A_{11} & Y^T \\
      X & B^T
   \end{bmatrix}
   =
    \begin{bmatrix}
       ra + 1 + \sqrt{-1}\, \e s & (r v - \e Z)^T \\
       u + \e Z & B^T
    \end{bmatrix}.
  \end{equation}
  Here $a \in \C$, $u,v \in \C^{n-1}$, and the matrix $B$ are independent of $r, s$, and $Z$.
  After conditioning on $V$, we can treat them as constants.

   The further strategy takes into account the properties of the matrix $B$.
  Depending on the invertibility properties of this matrix, we condition on some of the random variables $r,s$, and $Z$, and use the other ones to show that $A$ is well-invertible with high probability.

  \subsection{Case 1: $B$ is poorly invertible}
  Assume that $\smin(B) \le \l_1 \e$, where $\l_1$ is another ``small'' parameter ($\l_1=t^{a_1}$ for $0<a_1<1$). In this case we will condition on $r$ and $s$, and rely on $Z$ to obtain the probability bound. We know that there exists a vector $\tilde{w} \in S^{n-2}$ such that $\norm{B \tilde{w}}_2 \le \l_1 \e$.
  Let $x \in S_1$ be arbitrary. We can express it as
\begin{equation*}				
x =
\begin{bmatrix}
x_1 \\ \tilde{x}
\end{bmatrix},
\quad \text{where } |x_1| \ge \frac{1}{\sqrt{n}}.
\end{equation*}
Set
$$
w =
\begin{bmatrix}
0 \\ \tilde{w}
\end{bmatrix} \in \C^n.
$$
Using the decomposition of $A$ given in \eqref{eq: A}, we obtain
\begin{align*}
\|Ax\|_2
&\ge |w^T A x| =
  \left| \begin{bmatrix}
  0 & \tilde{w}^T
  \end{bmatrix}
  \begin{bmatrix}
  A_{11} & Y^T \\
  X & B^T
  \end{bmatrix}
  \begin{bmatrix}
  x_1 \\ \tilde{x}
  \end{bmatrix} \right| \\
&= |x_1 \cdot \tilde{w}^T X + \tilde{w}^T B^T \tilde{x}| \\
&\ge |x_1| \cdot |\tilde{w}^T X| - \|B \tilde{w}\|_2 		\quad \text{(by the triangle inequality)} \\
&\ge \frac{1}{\sqrt{n}} \, |\tilde{w}^T X| - \l_1 \e	
		\quad \text{(using $|x_1| \ge 1/\sqrt{n}$).}
\end{align*}
 By the representation \eqref{eq: A precise}, $X = u + \e Z$, where $u \in \C^{n-1}$ is a vector independent of $Z$. Taking the infimum over
$x \in S_1$, we obtain
$$
\inf_{x \in S_1} \|Ax\|_2
\ge \frac{1}{\sqrt{n}} \, |\tilde{w}^T u + \e \tilde{w}^T Z| - \l_1 \e.
$$
Recall that $\tilde{w}$, $u$ are fixed vectors, $\|\tilde{w}\|_2 = 1$, and $Z \sim N_\R(0,I_{n-1})$.
Then $ \tilde{w}^T Z=  \gamma$ is a complex normal random variable of variance $1$: $\E|\g|^2=1$.
 This means that $\E \big(\text{Re}(\g)\big)^2 \ge 1/2$ or $\E \big(\text{Im}(\g)\big)^2 \ge 1/2$.
 A quick density calculation yields
the following bound on the conditional probability:
$$
\P_Z \left\{ |\tilde{w}^T u + \e \tilde{w}^T Z| \le 2 \l_1 \e \sqrt{n} \right\}
\le C \l_1 \sqrt{n}.
$$
Therefore, a similar bound holds unconditionally.
Thus, combining the previous estimates, we conclude that in case when $\smin(B) \le \l_1 \e$, and if $\e$ and $\l_1$ are chosen so that $\l_1 \e \ge t$, we have
\begin{align*}
 \P( \inf_{x \in S_1} \|Ax\|_2 \le t   )
& \le \P( \frac{1}{\sqrt{n}} \, |\tilde{w}^T X| - \l_1 \e	\le t) \\
& \P \left\{ |\tilde{w}^T u + \e \tilde{w}^T Z| \le 2 \l_1 \e \sqrt{n} \right\}
\le C \l_1 \sqrt{n} =C \sqrt{n} \cdot t^{a_1}.
\end{align*}
   \vskip 0.05in
   {\small \sffamily
   {\bf Choice of the parameters}. The constraint
   \[
     \l_1 \e \ge t,
   \]
   appearing in this case, holds if we take $\l_1=t^{0.1}$.
   }
   \vskip 0.05in

\subsection{Case 2: $B$ is nicely invertible}
 Assume that $\smin(B) \ge \l_2$, where $\l_2=t^{a_2}$ is a ``small'' parameter.
 In this case, we will also use only the local perturbation, however the crucial random variable will be different. We will condition on $r$ and $Z$, and use the dependence on $s$ to derive the conclusion of the theorem.

  Set
 \[
   M=
   \begin{bmatrix}
      1 & 0 \\ 0 & (B^T)^{-1}
   \end{bmatrix},
 \]
 then $\norm{M} \le \l_2^{-1}$. Therefore,
 \[
  \inf_{x \in S_1} \norm{Ax}_2 \ge \l_2 \inf_{x \in S_1} \norm{M Ax}_2.
 \]
 The matrix $M A$ has the following block representation:
 \[
   M A=
   \begin{bmatrix}
      A_{11} & Y^T \\  (B^T)^{-1}X & I_{n-1}
   \end{bmatrix}.
 \]
 Recall that we assumed that $\norm{D} \le K$ where $K$ is a constant. Combining this with the already used inequality $\norm{Z}_2 \le K_0 \sqrt{n}$, which holds outside of the event of exponentially small probability, we conclude that
 $Y= r v - \e Z$ satisfies
 \[
   \norm{Y}_2 \le \norm{v}_2+ \e \norm{Z}_2 \le 2K
 \]
   if $\e  K_0 \sqrt{n} \le K$. To bound $\inf_{x \in S_1} \norm{Ax}_2$, we use an observation that
 \[
   \begin{bmatrix}
      1 & -Y^T
   \end{bmatrix}
   \cdot
   \begin{bmatrix}
      Y^T \\   I_{n-1}
   \end{bmatrix}
   =0.
 \]
 This implies that for every $x \in S_1$,
 \begin{align*}
   \norm{M Ax}_2
   &\ge \frac{1}{\norm{[1 \  -Y^T]}_2} \cdot
   \left|
   \begin{bmatrix}
      1 & -Y^T
   \end{bmatrix}
   M A
   \begin{bmatrix}
      x_1 \\ \tilde{x}
   \end{bmatrix}
   \right| \\
   &\ge \frac{1}{2 K} \cdot |A_{11}-Y^T (B^T)^{-1}X| \cdot |x_1| \\
   &\ge \frac{1}{2 K \sqrt{n}} \cdot |A_{11}-Y^T (B^T)^{-1}X|.
 \end{align*}
 The right hand side of this inequality does not depend on $x$, so we can take the infimum over $x \in S_1$ in the left hand side.
 Combination of the previous two inequalities reads
 \[
     \inf_{x \in S_1} \norm{Ax}_2 \ge \frac{\l_2}{2 K \sqrt{n}} \cdot |A_{11}-Y^T (B^T)^{-1}X|
 \]
  Recall that according to \eqref{eq: A precise}, $A_{11}= \sqrt{-1}\e s+d$, where $s$ is a real $N(0,1)$ random variable, and $d$ is independent of $s$. Conditioning on everything but $s$, we can treat $d$ and $Y^T (B^T)^{-1}X$ as constants. An elementary estimate using the normal density yields
  \[
    \P_s(|A_{11}-Y^T (B^T)^{-1}X| \le \mu) \le C \frac{\mu}{\e} \quad \text{for all } \mu >0.
  \]
  Applying this estimate with $\mu=\frac{2 K \sqrt{n}}{\l_2} \cdot t$ and integrating over the other random variables, we obtain
  \[
   \P ( \inf_{x \in S_1} \norm{Ax}_2 \le t)
   \le C \frac{2 K \sqrt{n}}{\l_2 \e} \cdot t \le C' \sqrt{n} \cdot t^c
  \]
  for some $c>0$ if $\l_2$ is chosen appropriately.
     \vskip 0.05in
   {\small \sffamily
   {\bf Choice of the parameters}.
   The inequality
   \[
    \frac{1}{\l_2 \e} \cdot t \le t^c, \ c>0
   \]
   holds with $c=0.2$ if we set $\l_2= t^{0.2}$.
   The constraint
   \[
     \e K_0 \sqrt{n} \le K,
   \]
   appearing above, is satisfied since we have chosen $\e=t^{0.6}$ and $K_0=t^{-0.05}$.

   One can try to tweak the parameters $\l_1, \l_2$, and $\e$ to cover all possible scenarios. This attempt, however, is doomed to fail since the system of the constraints becomes inconsistent. Indeed, to include all matrices $B$ in Cases 1 and 2, we have to choose $\l_2 \le \l_1 \e$. With this choice,
   \[
     \frac{t}{\l_2 \e} \ge  \frac{t}{\l_1 \e^2} >1,
   \]
   because of the constraint $K_0^2 n \e^2 \le t/2$. This forces us to consider the intermediate case.
   }
   \vskip 0.05in

  \subsection{Case 3, intermediate: $B$ is invertible, but not nicely invertible.}
  Assume that $\l_1 \e \le \smin(B) \le \l_2$ with $\l_2, \l_1$ defined in Cases 1 and 2. This is the most delicate case. Here we will have to rely on both local and global perturbations.
  We proceed like in Case 2 by multiplying $Ax$ from the left by a vector which  eliminates the dependence on all coordinates of $x$, except the first one.
  To this end,  note that
  \[
    \begin{bmatrix}
      1 & -Y^T (B^T)^{-1}
    \end{bmatrix}
    \cdot
    \begin{bmatrix}
      Y^T \\ B^T
    \end{bmatrix}
    = 0.
  \]
 Hence, for any $x \in S_1$,
  \begin{align*}
    \norm{A x}_2
    &\ge \frac{1}{\norm{\begin{bmatrix}      1 & -Y^T (B^T)^{-1}    \end{bmatrix}}_2}
      \left|
      \begin{bmatrix}
      1 & -Y^T (B^T)^{-1}
    \end{bmatrix}
    \cdot
    \begin{bmatrix}
      A_{11} &  Y^T \\ X & B^T
    \end{bmatrix}
    \cdot
      \begin{bmatrix}
      x_1 \\ \tilde{x}
    \end{bmatrix}
    \right| \\
    &\ge \frac{1}{1+\norm{Y^T (B^T)^{-1}}_2}
     \left| (A_{11}- Y^T (B^T)^{-1}X)x_1 \right| \\
    &\ge  \frac{1}{1+\norm{Y^T (B^T)^{-1}}_2}  |A_{11}-Y^T (B^T)^{-1}X| \cdot \frac{1}{\sqrt{n}}.
  \end{align*}
  Since the right hand side is independent of $x$, we can take the infimum over $x \in S_1$.

   Note that $Y^T (B^T)^{-1}$ is independent of $s$, see \eqref{eq: A precise}.
   We consider two subcases.
  If $\norm{Y^T (B^T)^{-1}}_2 \le \l_2^{-1}$,  then
  \[
   \inf_{x \in S_1} \norm{A x}_2 \ge  \frac{\l_2}{2  \sqrt{n}}  |A_{11}-Y^T (B^T)^{-1}X|,
  \]
   and we can finish the proof exactly like in Case 2, by conditioning on everything except $s$, and estimating the probability with respect to $s$.

    The second subcase requires more work. Assume that
  $\norm{Y^T (B^T)^{-1}}_2 \ge \l_2^{-1}$. Then the  inequality above yields
  \[
   \inf_{x \in S_1} \norm{A x}_2 \ge  \frac{1}{2 \sqrt{n}\norm{Y^T (B^T)^{-1}}_2}  |A_{11}-Y^T (B^T)^{-1}X|.
  \]
  Since we do not have a satisfactory upper bound for $\norm{Y^T (B^T)^{-1}}_2$, we cannot rely on $A_{11}$ to estimate the small ball probability. The second term in the numerator looks more promising, because it contains the same vector $Y^T (B^T)^{-1}$. This term, however, is difficult to analyze, since the random vectors $X$ and $Y$ are dependent. A simplification of both numerator and denominator would allow us to get rid of this dependence.

  We start with analyzing the denominator. By \eqref{eq: A precise}, $Y=rv-\e Z$, so
  \[
    \norm{Y^T (B^T)^{-1}}_2 \le \norm{v^T (B^T)^{-1}}_2+ \e \norm{Z^T (B^T)^{-1}}_2.
  \]
  As in the previous cases, disregarding an event of a small probability, we can assume that $\norm{Z}_2 \le K_0 \sqrt{n}$. Then by the assumption on $\smin(B)$,
  \begin{equation*}
    \e \norm{Z^T (B^T)^{-1}}_2 \le \frac{\e K_0 \sqrt{n}}{\smin(B)}
    \le \frac{ K_0 \sqrt{n}}{\l_1}.
  \end{equation*}
  The parameters $K_0, \l_1$, and $\l_2$ can be chosen so that $\frac{ K_0 \sqrt{n}}{\l_1} \le \l_2^{-1}/2$. Then, since
  by assumption $\norm{Y^T (B^T)^{-1}}_2 \ge \l_2^{-1}$, we conclude that
 \[
    \norm{Y^T (B^T)^{-1}}_2 \le 2 \norm{v^T (B^T)^{-1}}_2
  \]
  and
  \[
   \inf_{x \in S_1} \norm{A x}_2 \ge  \frac{1}{4 \sqrt{n}\norm{v^T (B^T)^{-1}}_2} \cdot |A_{11}-Y^T (B^T)^{-1}X|.
  \]
  The denominator here is independent of our random parameters.

  Now we pass to the analysis of the numerator.
   From \eqref{eq: A precise} it follows that $A_{11}-Y^T (B^T)^{-1}X=\a r+\b$ is a linear function of $r$
   with coefficients $\a$ and $\b$, which depend on other random parameters. This representation would allow us to filter out several complicated terms in $A_{11}-Y^T (B^T)^{-1}X$ by using the global perturbation $r$.

  Let $\l_3>0$ be a ``small'' parameter: $\l_3 = t^{a_3}$. Condition on everything except $r$. Since $r$ is uniformly distributed over the unit circle in $\C$, an easy density calculation yields
  \begin{equation}  \label{eq: on the crcle}
    \P_r (|\a r+b| \ge \l_3 |\a|) \ge 1-C \l_3.
  \end{equation}
  Taking the expectation with respect to the other random variables shows that the same bound holds unconditionally. Thus, disregarding the event of a small probability $C \l_3$, we obtain that $|A_{11}-Y^T (B^T)^{-1}X| \ge \l_3 |\a|$. The coefficient $\a$ in turn can be represented as follows: $\a=\a'- \e v^T (B^T)^{-1}Z$, where $\a' \in \C$ is independent of $Z$. Incorporating this into the bound above, we obtain
  \[
   \inf_{x \in S_1} \norm{A x}_2 \ge  \frac{\l_3}{4 \sqrt{n}\norm{v^T (B^T)^{-1}}_2}  |\a'- \e v^T (B^T)^{-1}Z|.
  \]
  Using the global perturbation allowed us to simplify the numerator and expose its dependence on the local perturbation variable $Z$. We will finish the proof using this local perturbation.

  Set $h^T=v^T (B^T)^{-1}/\norm{v^T (B^T)^{-1}}_2$ and recall that $h \in \C^{n-1}$ is independent of $Z$. Conditioning on everything except $Z$, we see that
  \[
   g:= \frac{\a'}{\norm{v^T (B^T)^{-1}}_2}- \e h^T Z= \text{const} + \e \g',
  \]
 where $\g'$ is a complex normal random variable of unit variance: $\E|\g'|^2=1$. Hence, as before, for any $\mu>0$
  \[
   \P_Z(|g| \le \mu) \le C \mu/\e,
  \]
  and integrating over other random variables,  we conclude that the same estimate holds unconditionally. Combining  this inequality with the previous one
  and recalling that we dropped an event of probability $C \l_3$ while using \eqref{eq: on the crcle}, we obtain
  \[
   \P (\inf_{x \in S_1} \norm{A x}_2 \le t)
   \le \P \left(|g| \le \frac{4 \sqrt{n}}{\l_3} t \right) +C \l_3
   \le C \frac{4 \sqrt{n}}{\l_3 \e} t  +C \l_3 \le C' \sqrt{n} t^{c'}
  \]
  for some $c'>0$.
  Choosing appropriate constants $a$ and $a_3$ in $\e=t^a$ and $\l_3=t^{a_3}$ finishes the proof in this case and completes the proof of Theorem \ref{th: unitary}.

     \vskip 0.05in
   {\small \sffamily
   {\bf Choice of the parameters}.
   The analysis of this case requires the following two constraints:
   \[
      \frac{K_0 \sqrt{n}}{\l_1} \le \frac{\l_2^{-1}}{2} \quad \text{and} \quad \frac{t}{\l_3 \e}+\l_3 \le t^{c'}, \ c>0.
   \]
   The first one is satisfied with the choice $K_0=t^{-0.05}, \ \l_1=t^{0.1}, \l_2=t^{0.2}$ that we made above.
   To satisfy the second one, set $\l_3= t^{0.2}$.
   }
\end{proof}
We made no effort to optimize the dependence on $t$ and $n$ in the proof above. It would be interesting to find the optimal bound here. Another interesting question, suggested by Djalil Chafai, is to analyze the behavior of the smallest singular value of the matrix $D+U$ where $U$ is uniformly distributed over a discrete subgroup of the unitary group. The case of the permutation group may be of special interest, because of its relevance for random graph theory. This question may require a combination of tools from Sections \ref{sec: continuous}--\ref{sec: unitary}, since both obstacles, the arithmetic structure and the lack of independence, make an appearance here.

\newpage
 \bibliographystyle{amsplain}

\end{document}